\definecolor{Dgreen}{rgb}{102,255,102}
\definecolor{Purp}{rgb}{102,0,204}
\def\tto{\twoheadrightarrow}
\def\1{\mathbf{1}}
\def\0{\mathbf{0}}
\def\inter{\mathop{\cap}}
\def\NN{\mathbb{N}}
\def\RR{\mathbb{R}}
\def\XX{\mathbf{X}}
\def\AA{\mathbf{A}}
\def\KK{\mathbf{K}}
\def\SS{\mathbf{S}}
\def\HH{\mathbf{H}}
\def\MM{\boldsymbol{\mathcal{M}}}
\newcommand{\widebar}[1]{\overline{#1}}
\newcommand{\bcal}[1]{\boldsymbol{\mathcal{#1}}}
\newcommand{\bscr}[1]{\boldsymbol{\mathscr{#1}}}
\newcommand{\bfrak}[1]{\boldsymbol{\mathfrak{#1}}}
\def\union{\mathop{\cup}}
\def\inter{\mathop{\cap}}
\def\ds{\displaystyle}
\def\nl{\mbox{} \newline }
\DeclareMathOperator\ext{ext}
\definecolor{Dgreen}{rgb}{0,0.5,0}
\newtheorem{theorem}{Theorem}[section]
\newtheorem{proposition}[theorem]{Proposition}
\newtheorem{lemma}[theorem]{Lemma}
\newtheorem{definition}[theorem]{Definition}
\newtheorem{remark}[theorem]{Remark}
\theoremstyle{definition}
\renewcommand{\theassumption}{\Alph{assumption}} 
\newlist{assumptions}{enumerate}{1}
\setlist[assumptions,1]{label=(\theassumption.\arabic*), ref=(\theassumption.\arabic*), leftmargin=*}
\NewDocumentEnvironment{List-perso}{m}
  {
    \begin{enumerate}[label=(#1.\arabic*), ref=(#1.\arabic*), leftmargin=*]
  }
  {
    \end{enumerate}
  }
\NewDocumentEnvironment{myenv}{mm}
  {
    \par\medskip
    \noindent\textbf{#1}\par 

    \newlist{vlist}{enumerate}{1}
    \setlist[vlist,1]{label=(#2.\arabic*), ref=(#2.\arabic*), leftmargin=*}
  }
  {
    \par\medskip
  }
\newtcolorbox{textecouleur}[1][]{%
  colback=white,
  colframe=white,
  coltext=#1,
  width=\textwidth,
  left=0mm,
  right=0mm,
  box align=left,     
  boxrule=0pt,        
  breakable,
  enhanced,
  sharp corners,
  before skip=5pt,
  after skip=5pt
}
\begin{document}
\bibliographystyle{plain}

\title{On the Feinberg-Piunovskiy Theorem and its extension to chattering policies
\footnote{Supported by grant PID2021-122442NB-I00 from the Spanish \textit{Ministerio de Ciencia e Innovaci\'on.}}}

\author{
Fran\c{c}ois Dufour\footnote{
Bordeaux INP; Inria centre at the University of Bordeaux, Team: ASTRAL; IMB, Institut de Math\'ematiques de Bordeaux, Universit\'e de Bordeaux, France. e-mail:  \tt{francois.dufour@math.u-bordeaux.fr}  {(Author for correspondence)}}
\and
Tom\'as Prieto-Rumeau\footnote{Statistics Department, UNED, Madrid, Spain. e-mail: {\tt{tprieto@ccia.uned.es}}}}
\date{ \today }
\maketitle
\begin{abstract}
The Feinberg-Piunovskiy Theorem established in \cite[Theorem 3.8]{piunovskiy19} asserts that for a discrete-time uniformly absorbing and atomless Markov Decision Process (MDP) with Borel state space and multiple criteria, the family of deterministic stationary policies is a sufficient class of policies. In this paper, we study some related problems and some extensions. In particular dropping the atomless hypothesis,
we establish that the set of chattering stationary policies is a sufficient class of policies for uniformly absorbing MDPs with measurable state space and multiple criteria.
We also prove the Feinberg-Piunovskiy Theorem in the context of a measurable state space in two different ways that differ from \cite{piunovskiy19}.
In particular, we show that the sufficiency of chattering stationary policies directly yields the sufficiency of deterministics stationary policies
for atomless models.
Our approach is partially based on the analysis of extreme points of certain convex sets of occupation measures satisfying integral type constraints.
We show that for a uniformly absorbing model an extreme point of such sets is necessarily given by occupation measures induced by chattering stationary policies of order $d+1$ where $d$ is the dimension of the vector of constraints. 
When in addition the model $\mathsf{M}$ is atomless, then the extreme points of this constrained set of occupation measures are precisely the occupation measures generated by deterministic stationary policies satisfying these constraints.

\end{abstract}
{\small 
\par\noindent\textbf{Keywords:} Markov decision processes; absorbing model; extreme points of occupation measures, chattering policies, deterministic policies, atomless model, sufficiency of families of policies
\par\noindent\textbf{AMS 2020 Subject Classification:} 91A10, 91A15.}

\section{Introduction}
\label{sec-1}
 
In this paper, we consider a discrete-time absorbing Markov Decision Process (MDP) denoted by $\mathsf{M}$ and defined on a measurable state space $\XX$ and a Borel action space $\AA$, with a fixed initial distribution. 
We adopt a general measurable framework for the state space in order to encompass models commonly considered in game theory.
In particular, non-zero sum games have been recently studied in the context of constraints by using the notion of occupation measure, a concept we study here in details.
The set of pairs of state and  feasible action denoted by $\KK$ satisfies some technical measurability properties and throughout the paper, the hypotheses on $\mathsf{M}$ will be strengthened
by also considering (uniformly) absorbing as well as atomless models.
The set of all policies will be denoted by $\mathbf{\Pi}$ and we will also consider two specific families of policies: the sets of deterministic stationary policies $\mathbb{D}$ and chattering stationary policies $\mathbb{C}_{n}$
of order $n\in\NN^*$,
that is stationary policies given by a randomization (or, in other words, a mixture) between at most $n$ deterministic stationary policies (see Definition \ref{Def-policies}
for a mathematical description of these policies).
We have chosen to use the term of chattering policy which is borrowed from optimal control theory. Readers are referred to the discussion at the top of page 25 in \cite{balder95} and the references therein for a discussion of this topic, as well as 
\cite{feinberg2020} that used it in the context of MDPs.

For a measurable subset $\Delta \subset \XX$ referred to as the absorbing set, a $\Delta$-absorbing MDP is characterized by the fact that, once the state process
enters~$\Delta$, it remains there indefinitely generating no further reward or cost.
A standard assumption is that the expected entrance time of the state process to $\Delta$ is finite under any admissible policy.
The notion of a uniformly absorbing MDP was introduced in \cite[Definition 3.6]{piunovskiy19}) and means roughly speaking that the series of the queues of the hitting time of $\Delta$
go to zero uniformly over the set of policies.
For comprehensive discussions as well as recent results on (uniformly) absorbing MDPs, we refer the reader to~\cite{altman99,fra-tom2024,fra-tom2025,piunovskiy19,feinberg12,Piunovskiy25-book,piunovskiy24,piunovskiy24b,yi24}
and references therein.
An MDP is called atomless if the initial distribution of the state process and the transition probabilities are atomless.

The notion of occupation measure will play a central role in this paper. An occupation measure generated (or induced) by a policy $\pi\in\mathbf{\Pi}$ denoted by $\mu_{\pi}$ provides a concise representation
of the expected aggregated behavior of the state and action processes over time.
More precisely, $\mu_{\pi}(\Gamma)$ gives the expected time spent by the state and action processes in the measurable subset $\Gamma$ of $\XX\times\AA$ under the policy $\pi\in\mathbf{\Pi}$.
We will write $\bcal{O}$ for the set of occupation measures.

A bounded one-step reward vector-function denoted by $r$ with values in $\RR^{d}$ for $d\in\NN^*$ will be used to evaluate the performance of a policy of the model $\mathsf{M}$. More specifically, the performance vector of a policy $\pi\in\mathbf{\Pi}$ will be given by $\ds \mathcal{R}(\pi)=\int_{\XX\times\AA} r(x,a) \mu_{\pi}(dx,da).$
A subset $\Lambda\subset\mathbf{\Pi}$ will be called a sufficient class (family) of policies if $\mathcal{R}(\Lambda)=\mathcal{R}(\mathbf{\Pi})$.

\bigskip

A main, but not the only, purpose of this paper is to characterize the extremes points of sets of occupation measures satisfying 
integral type constraints of the form $\ds \int_{\XX\times\AA} r(x,a) \mu(dx,da)=\alpha$ for some $\alpha\in\RR^d$.
Only a few results have been established on this subject, mainly dealing with the characterization of the extreme points of the unconstrained set of occupation mesures.
Let us mention \cite{altman99,borkar88,borkar91} for finite or countable state spaces, and \cite{feinberg12,piunovskiy24} for Borel state spaces.
If we focus on the literature for Borel MDPs, it has been shown that every extreme (finite) occupation measure is generated by a deterministic stationary strategy
for absorbing models in \cite[Lemma 4.6]{feinberg12} and for models having a costless (rewardless) absorbing state in \cite[Theorem 1]{piunovskiy24}
(in this latter case, this type of model is more general than an absorbing model, in particular an occupation measure may be infinite).
As far as we know, all these results focus on unconstrained sets of occupation measures.
In the present work, we will establish the following results.
\begin{enumerate}[label=\alph*), leftmargin=*, labelsep=0.1cm, itemindent=0pt, topsep=0cm, nosep]
\item \label{convex-a}
We will first show that extreme points of the set of occupation measures satisfying these integral type constraints are necessarily given by occupation measures induced by chattering stationary policies of order $d+1$ satisfying these constraints (where $d$ is the dimension of the reward vector-function).
To get this result, we will show in Proposition \ref{Linearly-bounded-closed} that the set of occupation measures $\bcal{O}$ is linearly closed and linearly bounded.
\item \label{convex-b} If in addition, the model $\mathsf{M}$ is atomless, then the extreme points of this constrained set of occupation measures are precisely
the occupation measures generated by deterministic stationary policies satisfying these constraints.
\end{enumerate}

\bigskip

We refer the reader to Theorems \ref{Dubins} and \ref{Castaing-Valadier} respectively for a rigorous and complete statement of these two results.
As far as we know, they are both novel and will play a particularly important role in showing the sufficiency of certain classes of policies as explained now.

\bigskip

The second main objective of this paper is to study for a uniformly absorbing model, the sufficiency of the set of chattering stationary policies and also the sufficiency of the deterministic stationary policies but in assuming in addition that the model is atomless.
There are few results on the sufficiency of chattering policies for MDPs even in the case of a real valued reward function.
To the best of our knowledge, the only results of this type are related to optimization problems, see for example Theorem 2 in \cite{feinberg2020}, Theorem 9.2 in \cite{feinberg12} 
and Theorem 2 in \cite{piunovskiy24} where roughly speaking, it is shown that under continuity-compactness conditions, the optimal policy for a constrained absorbing MDP is given
by a chattering policy of order $n+1$ where $n$ is the number of constraints. 
It is also worth mentioning the comment at the top of page 167 in \cite{piunovskiy19} stating that for a discounted MDP
with finite state and action spaces, $\mathcal{R}(\mathbf{\Pi})$ is a convex hull of $\mathcal{R}(\mathbb{D})$ by virtue of Theorem 6.1 in \cite{feinberg12}.
It is very interesting to see that another way of stating this result is to say that the family of chattering policies is a sufficient class of policies which is precisely one of the main results
we will establish in this paper in a very general framework.
There are more references on the sufficiency of deterministic policies for atomless and uniformly absorbing models
and we refer the reader to the introduction of \cite{piunovskiy19} for a comprehensive and detailed panorama on this topic for total expected reward as well as for discounted reward..
We will focus on providing a brief description of the papers that we believe are most relevant to our results.
In \cite{piunovskiy00ef,piunovskiy02ef} the authors studied the sufficiency of deterministic Markov policies for atomless models with Borel state space in a very general context by considering a vector performance functional having the form of total rewards (\textit{i.e.} the model under consideration being not necessarily absorbing).
Regarding the sufficiency of deterministic stationary policies for absorbing models, one of the most striking result is certainly the Feinberg-Piunovskiy Theorem, see Theorem 3.8 in \cite{piunovskiy19}.
We should also mention \cite{feinberg-piunovskiy06} where the authors  have shown the sufficiency of nonrandomized decision rules for statistical decision problems with nonatomic state distributions 
holds for arbitrary Borel decision sets and for arbitrary measurable loss functions generalizing the celebrated Dvoretzky-Wald-Wolfowitz Theorem.
In the same vein, it is also worth mentioning \cite{jaskiewicz-nowak19} where the authors provide a generalization of this Theorem to the case of conditional expectations.

\bigskip

\noindent 
In the present paper, we will establish the following results:
\begin{enumerate}[label=\arabic*), leftmargin=*, labelsep=0.1cm, itemindent=0pt, topsep=0cm]
\item \label{claim-suff1} Assuming the model $\mathsf{M}$ is uniformly absorbing, for any policy $\pi\in\mathbf{\Pi}$ there exists a chattering stationary policy
$\gamma\in\mathbb{C}_{d+1}$ of order $d+1$ having the same performance vector, that is,
$\ds \int_{\XX\times\AA} r(x,a) \mu_{\pi}(dx,da) = \int_{\XX\times\AA} r(x,a) \mu_{\gamma}(dx,da)$
or equivalently, $\mathcal{R}(\mathbb{C}_{d+1})=\mathcal{R}(\mathbf{\Pi})$, see Theorem \ref{Main-theorem-aa}.
As far as we know, this result is to the best of our knowledge novel even in the case of a real valued reward function.
To get it, we will first show that the set of chattering Markov policies of order $2d+1$ is a sufficient class of control strategies
(Theorem \ref{deterministic-Markov-policy}\ref{deterministic-Markov-policy-a}).
Combined with the property stated in point \ref{convex-a}, this yields the following important properties:  the set $\mathcal{R}(\mathbb{C}_{d+1})$
is convex and dense in $\mathcal{R}(\mathbf{\Pi})$ and have also the same relative interior as $\mathcal{R}(\mathbf{\Pi})$ (Theorem \ref{Convexity-Denseness}\ref{Convexity-Denseness-a}).
The claim is then obtained by showing a technical property of uniformly absorbing models. Roughly speaking, for any policy $\pi\in\mathbf{\Pi}$, there exists a subset 
$\mathbf{\Pi}^{*}$ of $\mathbf{\Pi}$ such that the model associated with $\mathbf{\Pi}^{*}$  inherits the properties of $\mathsf{M}$ 
and for which  $\mu_{\pi}(r)$ is in the relative interior of $r(\mathbf{\Pi}^{*})$ (see Theorem \ref{Interior-point} for a precise statement of this result).
This result partly builds on a generalization of a theorem established in \cite[Theorem 6.3]{piunovskiy19} under the assumption that the model is atomless, extending it to the setting where this assumption is not imposed and for 
a general measurable state space.
All the properties of the set $\mathcal{R}(\mathbb{C}_{d+1})$ we have outlined above appear to be new.
\item \label{claim-suff2} If in addition the model is atomless, then the set of deterministic stationary policies is sufficient: 
$\mathcal{R}(\mathbb{D})=\mathcal{R}(\mathbf{\Pi})$, see Theorem \ref{Main-theorem-bb}.
This is precisely the Feinberg-Piunovskiy Theorem in the context of a general state space.
We will propose two different proofs for that result. 
One proof is based on Proposition \ref{Consequence-chattering} that states that 
the sufficiency of chattering stationary policies directly yields the sufficiency of deterministic stationary policies
for atomless models by using the characterization of extreme points of sets of occupation measures satisfying integral type constraints.
The second proof of this claim proceeds along the same lines as for item \ref{claim-suff1}.
We now highlight the main differences of this second approach with the one given in \cite[Theorem 3.8]{piunovskiy19}.
The same result is obtained here but for more general models since the state space is allowed to be a measurable space, which is a notable first difference.
Indeed, the approach used in  \cite{piunovskiy19} is based on  \cite[Theorem 2.1]{piunovskiy02ef}  which depends heavily on the Borel structure of the state space $\mathbf{X}$.
This last point was emphasized in \cite[Remark 5]{jaskiewicz-nowak19}.
To illustrate another important difference, observe that as mentioned in \cite{piunovskiy19}: 
\textit{one and nontrivial step is to prove that for an atomless model the set of performance vectors for all deterministic stationary policies is convex.
This fact is nontrivial even for the case of one criterion.}
In this regard, it should be emphasized that the authors in \cite{piunovskiy19} show that the convexity of the set $\mathcal{R}(\mathbb{D})$ is a consequence of their main result
(see Corollary 3.10 and the discussion above in \cite[Page 169]{piunovskiy19}).
The approach used in \cite{piunovskiy19} is based on a dimensionality reduction technique as the authors themselves state.
It consists of showing that the set of occupation measures generated by deterministic stationary policies endowed with the setwise topology is path connected
and then proceed by induction on the dimension of the one-step reward vector-function $r$ to get the sufficiency of the set of deterministic stationary policies, see section 9 in \cite{piunovskiy19}.
The proofs of the convexity of $\mathcal{R}(\mathbb{D})$ and the sufficiency of the set of deterministic stationary policies are intertwined through this recurrence.
As for us, the convexity of $\mathcal{R}(\mathbb{D})$ is directly obtained by using the characterization of the extreme points of the constrained sets of occupation measures.
The sufficiency of the family $\mathbb{D}$ of policies is then obtained by using the convexity of $\mathcal{R}(\mathbb{D})$ and the fact that this set has the same relative interior of $\mathcal{R}(\mathbf{\Pi})$
highlighting the differences between our two approaches.
Another important difference is that the notion of \textit{relative interior} of a convex set plays a central role in our work.
\end{enumerate}

\bigskip

The rest of the paper is organized as follows. In the remaining of this section we introduce some notation and two results from Young measures theory that will be useful in the sequel.
In Section \ref{Sec-2}, we introduce the control model under consideration and provide some basic definitions as well as basic properties.
Section \ref{Sec-3} is devoted to the characterization of the extreme points of the some constrained sets of occupation measures.
Under a set of appropriate assumption, we will establish in section \ref{Sec-4} a preliminary result on the sufficiency of the families of chattering and deterministic Markov policies.
We will reinforce this result by showing in the last section that the families of chattering and deterministic stationary policies are sufficient.

\paragraph{Notation and terminology.}
$\NN$ is the set of natural numbers including $0$, $\NN^{*}=\NN-\{0\}$, $\RR$ denotes the set of real numbers, $\RR_{+}$ the set of non-negative real numbers,
$\RR_{+}^{*}=\RR_{+}-\{0\}$, $\widebar{\RR}_{+}=\RR_{+}\union \{+\infty\}$ and $\widebar{\RR}_{+}^*=\RR_{+}^*\union \{+\infty\}$.
For any $q\in \NN$, $\NN_{q}$ is the set $\{0,1,\ldots,q\}$ and for any $q\in \NN^{*}$, $\NN_{q}^{*}$ is the set $\{1,\ldots,q\}$.
We write $\bscr{S}_{q}=\big\{(\beta_{1},\ldots,\beta_{q})\in\RR_{+}^q : \sum_{i=1}^q \beta_{i}=1 \big\}$ for the standard simplex in $\RR^q$.
We denote by $0_p$ the zero vector in $\mathbb{R}^p$ for $p \in \mathbb{N}^*$. If $\{u_{n}\}_{n\in\NN}$ is a sequence of real numbers, we will the following convention 
$\sum_{n=0}^{-1} u_{n}=0$ to simplify the writing of certain formulae.

On a measurable space $(\mathbf{\Omega},\bcal{F})$ we will consider the set of finite signed measures $\bcal{M}(\mathbf{\Omega})$, the set $\bcal{M}^+(\mathbf{\Omega})$
of finite nonnegative measures, 
and the set of probability measures~$\bcal{P}(\mathbf{\Omega})$.
An element of $\bcal{M}^+(\mathbf{\Omega})$ will be simply called a measure.
Let $(\mathbf{\Omega},\bcal{F},\mu)$ be a measure space. A set $\Gamma\subset\mathbf{\Omega}$ is called a $\mu$-null set if there exists $\Lambda\in\bcal{F}$ such that
$\Gamma\subset\Lambda$ and $\mu(\Lambda)=0$.
For a set $\Gamma\in\bcal{F}$, we denote by $\mathbf{I}_{\Gamma}:\mathbf{\Omega}\rightarrow\{0,1\}$ the indicator function of the set~$\Gamma$, that is,
$\mathbf{I}_{\Gamma}(\omega)=1$ if and only if $\omega\in\Gamma$.
For $\omega\in\mathbf{\Omega}$, we write $\delta_{\{\omega\}}$ for the Dirac probability measure at $\omega$ defined on $(\mathbf{\Omega},\bcal{F})$ by
$\delta_{\{\omega\}}(B)=\mathbf{I}_{B}(\omega)$ for any $B\in\bcal{F}$.

Let $(\mathbf{\Omega},\bcal{F})$ and $(\widetilde{\mathbf{\Omega}},\widetilde{\bcal{F}})$ be two measurable spaces.
A kernel on $\widetilde{\mathbf{\Omega}}$ given $\mathbf{\Omega}$ is a mapping
\mbox{$Q:\mathbf{\Omega}\times\widetilde{\bcal{F}}\rightarrow\RR^+$} such that $\omega\mapsto Q(B|\omega)$ is 
measurable on $(\mathbf{\Omega},\bcal{F})$ for every set $B$ in $\widetilde{\bcal{F}}$,   and  $B\mapsto Q(B|\omega)$ is in $\bcal{M}^+(\widetilde{\mathbf{\Omega}})$
for every $\omega\in\mathbf{\Omega}$. Depending on the context, and in particular when it is important to specify the $\sigma$-algebra associated with $\mathbf{\Omega}$, we will say that $Q$ is a kernel 
on $\widetilde{\mathbf{\Omega}}$ given $(\mathbf{\Omega},\bcal{F})$. Moreover, if $Q$ is a kernel on $\mathbf{\Omega}$ given $\mathbf{\Omega}$, we will simply say that
$Q$ is a kernel on $\mathbf{\Omega}$.
If $Q(\widetilde{\mathbf{\Omega}}|\omega)=1$ for all $\omega\in\mathbf{\Omega}$ then we say that $Q$ is a \textit{stochastic} kernel.
We now introduce two types of specific kernels we will work with.
\begin{enumerate}
\item A kernel $\gamma$ on $\widetilde{\mathbf{\Omega}}$ given $\mathbf{\Omega}$ is called \textit{finitely supported of order $p\in\NN^{*}$} if there exist $p$ measurable functions
$\{\phi_{i}\}_{i\in\NN_{p}^{*}}$ defined from $\mathbf{\Omega}$ to $\widetilde{\mathbf{\Omega}}$ and a measurable function $\beta$ defined 
from $\mathbf{\Omega}$ to $\bcal{S}_{p}$ satisfying
$$ \gamma(d\widetilde{\omega} | \omega) =\sum_{i=1}^{p} \beta_{i}(\omega) \delta_{\phi_{i}(\omega)}(d\widetilde{\omega})$$
for any $\omega\in\mathbf{\Omega}$ where $\beta(\omega)=(\beta_{1}(\omega),\ldots,\beta_{p}(\omega))$.
This terminology is borrowed from the theory of Young measures, see Definition 8.1 in \cite{balder95}.
\item For $\Gamma\in\bcal{F}$, we write $\mathbb{I}_{\Gamma}$ for the kernel on $\mathbf{\Omega}$ defined by
$\mathbb{I}_{\Gamma}(B|\omega)=\mathbf{I}_{\Gamma}(\omega) \delta_{\{\omega\}}(B)$ for $\omega\in\mathbf{\Omega}$ and $B\in\bcal{F}$.
\end{enumerate}
Let $Q$ be a stochastic kernel on $\widetilde{\mathbf{\Omega}}$ given $\mathbf{\Omega}$.
For  a bounded measurable function $f:\widetilde{\mathbf{\Omega}}\rightarrow\RR^p$ or a measurable function $f:\widetilde{\mathbf{\Omega}}\rightarrow\widebar{\RR}^p_{+}$
with $p\in\NN^*$, we will denote by $Qf$ the measurable function defined on $\mathbf{\Omega}$ by
$Qf=(Qf_{1},\ldots,Qf_{i},\ldots,Qf_{p})$ where $\ds Qf_{i}(\omega)=\int_\mathbf{\widetilde{\Omega}} f_{i}(\widetilde{\omega})Q(d\widetilde{\omega}|\omega)$ with $f_{i}$ is the $i$-th component of $f$.
for $\omega\in\mathbf{\Omega}$, $i\in\{1,\cdots,p\}$.
In the same spirit, for a measure $\mu\in\bcal{M}^{+}(\mathbf{\Omega})$, we write $\mu(f)$ for the vector $\big(\mu(f_{1}),\ldots,\mu(f_{i}),\ldots,\mu(f_{p})\big)$
where $\ds \mu(f_{i})=\int_{\mathbf{\Omega}} f_{i}(\omega) \mu(d\omega)$.
We also denote by $\mu Q$ the finite measure  on $(\widetilde{\mathbf{\Omega}},\widetilde{\bcal{F}})$ given by
$\ds B\mapsto \mu Q\,(B)= \int_{\mathbf{\Omega}} Q(B|\omega) \mu(d\omega)$ for $B\in\widetilde{\bcal{F}}$.
Let $(\widebar{\mathbf{\Omega}},\widebar{\bcal{F}})$ be a third measurable space and $R$ be a stochastic kernel on $\widebar{\mathbf{\Omega}}$
given $\widetilde{\mathbf{\Omega}}$. Then we will denote by $QR$ the stochastic kernel on $\widebar{\mathbf{\Omega}}$ given $\mathbf{\Omega}$ defined as
$\ds QR(\Gamma |\omega)= \int_{\widetilde{\mathbf{\Omega}}} R(\Gamma | \tilde{\omega}) Q(d\tilde{\omega} | \omega)$
for $\Gamma\in\widebar{\bcal{F}}$ and $\omega\in\bcal{F}$.
The product of the $\sigma$-algebras $\bcal{F}$ and $\widetilde{\bcal{F}}$ is denoted by $\bcal{F}\otimes\widetilde{\bcal{F}}$ and consists of the $\sigma$-algebra
generated by the measurable rectangles, that is, the sets of the form $\Gamma\times\widetilde{\Gamma}$ for $\Gamma\in\bcal{F}$ and
$\widetilde{\Gamma}\in\widetilde{\bcal{F}}$.
We denote by $\mu\otimes Q$ the unique  finite measure on the product space $(\mathbf{\Omega}\times\widetilde{\mathbf{\Omega}},\bcal{F}\otimes\widetilde{\bcal{F}})$ satisfying 
$\ds \mu\otimes Q( \Gamma\times\widetilde{\Gamma})= \int_{\Gamma} Q(\widetilde{\Gamma}|\omega)\mu(d\omega)$ for $\Gamma\in\bcal{F}$ and
$\widetilde{\Gamma}\in\widetilde{\bcal{F}}$.
Given $\mu\in\bcal{M}(\mathbf{\Omega}\times\widetilde{\mathbf{\Omega}})$, 
$\mu^{\mathbf{\Omega}}(\cdot)=\mu(\cdot\times \widetilde{\mathbf{\Omega}})\in\bcal{M}(\mathbf{\Omega})$ and
$\mu^{\widetilde{\mathbf{\Omega}}}(\cdot)=\mu(\mathbf{\Omega}\times\cdot)\in\bcal{M}(\widetilde{\mathbf{\Omega}})$ are the marginal measures.

For a metric space $\SS$, we write $\bfrak{B}(\SS)$ for its Borel $\sigma$-algebra. A metric space will be always endowed with its Borel $\sigma$-algebra.
A subset of a metric space will always be equipped with the induced metric unless explicitly stated otherwise.
For a measurable space $(\mathbf{\Omega},\bcal{F})$ and a metric space $\SS$, $\bcal{L}^{0}_{\SS}(\mathbf{\Omega},\bcal{F})$ stands for the set of measurable functions from $(\mathbf{\Omega},\bcal{F})$
into $(\SS,\bfrak{B}(\SS))$. If there is no ambiguity about the $\sigma$-algebra associated with $\mathbf{\Omega}$, we will write $\bcal{L}^{0}_{\SS}(\mathbf{\Omega})$ to simplify notation.
If $\mathbf{S}$ is a Polish space (a complete and separable metric space), we will consider on $\bcal{M}(\mathbf{\Omega}\times\mathbf{S})$ the  $ws$-topology (weak-strong topology)
which is the coarsest topology for which the mappings $\mu\mapsto \mu(f)$ are continuous for any bounded Carath\'eodory function $f$ defined on $\mathbf{\Omega}\times\mathbf{S}$.

\bigskip

There are several definitions of extreme set in the literature. We use here Definition 7.61 in \cite{aliprantis06}.
An extreme subset of a subset $\mathcal{C}$ of a vector space $\mathcal{V}$ is a nonempty subset $\mathcal{E}$ of $\mathcal{C}$ with the property that
if $x$ belongs to $\mathcal{E}$ it cannot be written as a proper convex combination of points of $\mathcal{C}$ outside $\mathcal{E}$.
A point $x$ is an extreme point of $\mathcal{C}$ if the singleton $\{x\}$ is an extreme set.
The set of extreme points of a subset $\mathcal{C}$ of a vector space $\mathcal{V}$ is denoted by $\ext(\mathcal{C})$.
A face of $\mathcal{C}$ is an extreme subset of $\mathcal{C}$ that is itself convex.
A convex set $\mathcal{C}$ in a vector space $\mathcal{V}$ is linearly bounded (linearly closed, respectively) if the intersection of $\mathcal{C}$ with any line in $\mathcal{V}$ is a bounded (closed, respectively) set.
The relative (respectively, boundary) interior of a convex set $\mathcal{C}$ in a topological vector space $\mathcal{V}$ is defined to be its topological (respectively, boundary) interior relative to its affine hull.

\bigskip

Let us recall the next disintegration lemma that will be useful in the forthcoming.
\begin{lemma}[Disintegration lemma]
\label{lemma-disintegration} 
Let $(\XX,\bcal{F})$ be a measurable space and $\AA$ a Borel space. Assume that $\KK=\{(x,a)\in\XX\times\AA : a\in\AA(x) \} \in\bcal{F}\otimes\bfrak{B}(\AA)$ and suppose also
that there exists a stochastic kernel $\kappa$ on $\AA$ given $\XX$ satisfying  $\kappa(\AA(x)|x)=1$ for any $x\in\XX$.
For every measure $\mu\in\bcal{M}^{+}(\XX\times\AA)$ such that $\mu(\mathbf{K}^{c})=0$ there exists a stochastic kernel $\sigma$ on $\AA$ given $\XX$ satisfying 
$\sigma(\AA(x)|x)=1$ for any $x\in\XX$ and $\mu= \mu^{\XX}\otimes \sigma$. There is uniqueness in the following sense: if $\sigma$ and $\sigma'$ are two stochastic kernels on $\AA$ given $\XX$ satisfying
$\mu= \mu^{\XX}\otimes \sigma= \mu^{\XX}\otimes \sigma'$ with $\sigma(\AA(x)|x)=\sigma'(\AA(x)|x)=1$ for any $x\in\XX$ 
then, $\sigma(\cdot|x)=\sigma'(\cdot|x)$ for $\mu^\XX$-almost every $x$.
\end{lemma}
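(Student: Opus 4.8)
The plan is to reduce to the classical disintegration theorem for a Borel fibre space and then use the reference kernel $\kappa$ to repair the support condition so that it holds at every point of $\XX$ rather than merely $\mu^{\XX}$-almost everywhere. Discarding the trivial case $\mu=0$ (where $\sigma=\kappa$ works), we may normalise and assume $\mu\in\bcal{P}(\XX\times\AA)$. Because $\AA$ is a Borel space, the second coordinate takes values in a standard Borel space while the base space $\XX$ is allowed to be an arbitrary measurable space; the existence of regular conditional distributions, equivalently the disintegration of $\mu$ with respect to its first marginal $\mu^{\XX}$ (see, e.g., \cite{aliprantis06}), then yields a stochastic kernel $\sigma_{0}$ on $\AA$ given $\XX$ with $\mu=\mu^{\XX}\otimes\sigma_{0}$. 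At this stage the constraint $\sigma_{0}(\AA(x)|x)=1$ is only guaranteed for $\mu^{\XX}$-almost every $x$.

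Then comes the gluing step. First I would check that $x\mapsto\sigma_{0}(\AA(x)|x)=\int_{\AA}\mathbf{I}_{\KK}(x,a)\,\sigma_{0}(da|x)$ is $\bcal{F}$-measurable: this holds for $\KK$ a measurable rectangle, and the class of sets for which it holds is a $\lambda$-system containing the rectangles, so by the monotone class (Dynkin) theorem it holds for every $\KK\in\bcal{F}\otimes\bfrak{B}(\AA)$. Consequently $N:=\{x\in\XX:\sigma_{0}(\AA(x)|x)<1\}\in\bcal{F}$, and from $\int_{\XX}\big(1-\sigma_{0}(\AA(x)|x)\big)\,\mu^{\XX}(dx)=\mu(\KK^{c})=0$ we get $\mu^{\XX}(N)=0$. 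I then define
\[
\sigma(\,\cdot\,|x)=\mathbf{I}_{N^{c}}(x)\,\sigma_{0}(\,\cdot\,|x)+\mathbf{I}_{N}(x)\,\kappa(\,\cdot\,|x),
\]
which is a stochastic kernel satisfying $\sigma(\AA(x)|x)=1$ for every $x\in\XX$ (using $\sigma_{0}$ off $N$ and $\kappa(\AA(x)|x)=1$ on $N$). Since $\sigma$ and $\sigma_{0}$ differ only on the $\mu^{\XX}$-null set $N$, we have $\mu^{\XX}\otimes\sigma=\mu^{\XX}\otimes\sigma_{0}=\mu$ on the $\pi$-system of measurable rectangles, hence on all of $\bcal{F}\otimes\bfrak{B}(\AA)$ by the uniqueness theorem for finite measures, giving the desired disintegration.

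Finally, uniqueness. Since $\AA$ is Borel it is second countable, so $\bfrak{B}(\AA)$ admits a countable generating $\pi$-system $\{B_{n}\}_{n}$. If $\mu^{\XX}\otimes\sigma=\mu^{\XX}\otimes\sigma'$, then for each $n$ and every $\Gamma\in\bcal{F}$ one has $\int_{\Gamma}\sigma(B_{n}|x)\,\mu^{\XX}(dx)=\mu(\Gamma\times B_{n})=\int_{\Gamma}\sigma'(B_{n}|x)\,\mu^{\XX}(dx)$, whence $\sigma(B_{n}|\,\cdot\,)=\sigma'(B_{n}|\,\cdot\,)$ outside a $\mu^{\XX}$-null set $N_{n}$; off the null set $\bigcup_{n}N_{n}$ the probability measures $\sigma(\,\cdot\,|x)$ and $\sigma'(\,\cdot\,|x)$ agree on the generating $\pi$-system and therefore coincide by Dynkin's theorem. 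I expect the one genuinely non-routine point, and the reason the hypotheses on $\kappa$ and on $\KK\in\bcal{F}\otimes\bfrak{B}(\AA)$ are imposed, to be the gluing step: the textbook disintegration only controls the fibre support up to a null set, and upgrading this to the pointwise identity $\sigma(\AA(x)|x)=1$ for \emph{all} $x$ while preserving measurability of the kernel is exactly what the reference kernel $\kappa$ makes possible.
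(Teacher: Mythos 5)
Your proof is correct and follows essentially the same route as the paper: the paper's proof is simply a citation of the classical disintegration theorem for a Borel fibre space over an arbitrary measurable base (Corollary 10.4.15 in \cite{bogachev07}), with the repair of the support condition via $\kappa$ and the uniqueness argument left implicit as ``easy.'' Your write-up supplies exactly those omitted details (the Dynkin-class measurability of $x\mapsto\sigma_{0}(\AA(x)|x)$, the gluing on the null set $N$, and the countably generated $\pi$-system argument for uniqueness), all of which are sound.
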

\begin{proof}
By using Corollary 10.4.15 in \cite{bogachev07}, we easily get the result.
\end{proof}
The following result is a version of Theorem 8.2 in \cite{balder95} under different assumptions. It can be proved by following the same reasoning as for Theorem 8.2 in \cite{balder95}.
\begin{theorem}[Finitely supported Young-measures]
\label{finitely-supported-young-measure}
Let $(\XX,\bcal{F},\mu)$ be a measure space, a family of subsets $\{\AA(x)\}_{x\in\XX}$ of a Borel space $\AA$. Consider 
$g\in\bcal{L}^{0}_{\RR^{n}_{+}}(\XX\times\AA)$ for $n\in\NN^{*}$ and a stochastic kernel $\kappa$ on $\AA$ given $\XX$ satisfying $\kappa(\AA(x)|x)=1$ for any $x\in\XX$  and
$\ds \int_{\XX\times\AA} |g(x,a)| \kappa(da | x) \mu(dx) < +\infty$.
Suppose also that the set $\KK=\{(x,a)\in\XX\times\AA : a\in\AA(x) \} \in\bcal{F}\otimes\bfrak{B}(\AA)$ and there exists a function $\theta\in\bcal{L}^{0}_{\AA}(\XX)$ whose graph is a subset of $\KK$.
Then, there exists a finitely supported kernel $\gamma$ on $\AA$ given $(\XX,\bcal{F})$ of order $n+1$ such that 
\begin{enumerate}[label=(\alph*)]
\item For any $x$ in $\XX$, $\gamma(\AA(x)|x)=1$.
\item For $\mu$-almost every $x$ in $\XX$, $\ds \int_{\AA} g(x,a) \kappa(da | x)  =  \int_{\AA} g(x,a) \gamma(da | x).$
\end{enumerate}
\end{theorem}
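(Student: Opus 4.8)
The plan is to realize, for $\mu$-almost every $x$, the vector $m(x):=\int_{\AA}g(x,a)\,\kappa(da|x)\in\RR^{n}_{+}$ as a convex combination of at most $n+1$ of the values $\{g(x,a):a\in\AA(x)\}$, and then to check that the mixing weights and the supporting actions can be selected $\bcal{F}$-measurably in $x$. The first step is a pointwise Carath\'eodory-type statement, the second is a measurable selection argument; the delicate point will be reconciling the selection with the abstract measurable structure of $\XX$.

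First I would fix $x$. Since the integrability hypothesis gives $\int_{\AA}|g(x,a)|\,\kappa(da|x)<+\infty$ for $\mu$-a.e.\ $x$, the coordinate functions $g_{1}(x,\cdot),\ldots,g_{n}(x,\cdot)$ are $\kappa(\cdot|x)$-integrable, and the probability measure $\kappa(\cdot|x)$ is carried by $\AA(x)$ because $\kappa(\AA(x)|x)=1$. Applying the Richter--Rogosinski theorem (the Carath\'eodory theorem for integrals, which yields membership in the \emph{convex hull} and not merely the closed convex hull, with the Carath\'eodory bound $n+1$) to $\kappa(\cdot|x)$ and to $g_{1}(x,\cdot),\ldots,g_{n}(x,\cdot)$ produces actions $a_{1},\ldots,a_{n+1}\in\AA(x)$ and weights $\beta\in\bscr{S}_{n+1}$ with $m(x)=\sum_{i=1}^{n+1}\beta_{i}\,g(x,a_{i})$. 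In other words, the section $W_{x}:=\{(a_{1},\ldots,a_{n+1},\beta):a_{i}\in\AA(x),\ \sum_{i}\beta_{i}g(x,a_{i})=m(x)\}$ is nonempty for $\mu$-a.e.\ $x$; here $m=\kappa g$ is measurable and finite $\mu$-a.e.\ by the definition of $Qf$ and the integrability assumption.

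Next I would make the choice measurable. The set
\[
W=\bigl\{(x,a_{1},\ldots,a_{n+1},\beta)\in\XX\times\AA^{n+1}\times\bscr{S}_{n+1}:a_{i}\in\AA(x)\ (1\le i\le n+1),\ \textstyle\sum_{i=1}^{n+1}\beta_{i}g(x,a_{i})=m(x)\bigr\}
\]
lies in $\bcal{F}\otimes\bfrak{B}(\AA^{n+1}\times\bscr{S}_{n+1})$: the constraints $a_{i}\in\AA(x)$ are $\KK$-membership conditions, measurable since $\KK\in\bcal{F}\otimes\bfrak{B}(\AA)$, and the equality constraint is measurable because $g$ and $m$ are. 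As $\AA$ is Borel, $\AA^{n+1}\times\bscr{S}_{n+1}$ is a Borel space, so an Aumann/von Neumann measurable selection theorem---applied in the $\mu$-completion of $\bcal{F}$---yields a $\mu$-measurable map $x\mapsto(\phi_{1}(x),\ldots,\phi_{n+1}(x),\beta(x))\in W_{x}$ on the co-null set where $W_{x}\ne\emptyset$. Replacing each component by an $\bcal{F}$-measurable function coinciding with it outside a $\mu$-null set, and redefining all of them on an $\bcal{F}$-measurable $\mu$-null set $\Lambda$ (chosen to contain every exceptional set) by $\phi_{i}(x)=\theta(x)$ and $\beta(x)=(1,0,\ldots,0)$, I obtain genuinely $\bcal{F}$-measurable $\phi_{i}\in\bcal{L}^{0}_{\AA}(\XX)$ and $\beta\in\bcal{L}^{0}_{\bscr{S}_{n+1}}(\XX)$. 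The kernel $\gamma(da|x)=\sum_{i=1}^{n+1}\beta_{i}(x)\,\delta_{\phi_{i}(x)}(da)$ is then finitely supported of order $n+1$. Property (a) holds for \emph{every} $x$ because $\phi_{i}(x)\in\AA(x)$ always (off $\Lambda$ by the selection, on $\Lambda$ by the graph condition on $\theta$), whence $\gamma(\AA(x)|x)=\sum_{i}\beta_{i}(x)=1$; property (b) holds off the union of $\Lambda$ and the null set where the selection fails the equation, hence $\mu$-a.e.

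The step I expect to be the main obstacle is the measurable selection under the weak hypothesis that $(\XX,\bcal{F})$ is only an abstract measurable space rather than a Polish/Borel one. What saves the argument is that the action factor $\AA$ is Borel, so $W$ has Polish fibres over $\XX$; its projection onto $\XX$ is therefore $\mu$-measurable (projection theorem in the $\mu$-completion) and a $\mu$-measurable selector exists. The two standing hypotheses are used precisely here: $\KK\in\bcal{F}\otimes\bfrak{B}(\AA)$ guarantees the joint measurability of $W$ (and nonemptiness of $\AA(x)$), while the measurable selector $\theta$ of $x\mapsto\AA(x)$ provides the fallback on $\Lambda$ that upgrades property (a) from $\mu$-a.e.\ to every $x\in\XX$.
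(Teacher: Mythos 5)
Your proposal is correct and is essentially the paper's own approach: the paper writes out no argument but states that the theorem ``can be proved by following the same reasoning as for Theorem 8.2 in \cite{balder95},'' and that reasoning is precisely your two-step scheme --- the pointwise Richter/Carath\'eodory step placing the barycenter $\int_{\AA}g(x,a)\,\kappa(da|x)$ in the convex hull of $\{g(x,a):a\in\AA(x)\}$ using at most $n+1$ points, followed by a von Neumann--Aumann measurable selection applied in the $\mu$-completion of $\bcal{F}$, an $\bcal{F}$-measurable modification off a $\mu$-null set, and the selector $\theta$ as fallback on the exceptional set so that conclusion (a) holds for \emph{every} $x\in\XX$, which is exactly the role the hypotheses $\KK\in\bcal{F}\otimes\bfrak{B}(\AA)$ and $\theta\in\bcal{L}^{0}_{\AA}(\XX)$ are designed to play. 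The only caveat, shared equally by the paper's citation (Balder works with finite measure spaces), is that your selection and modification steps implicitly require $\mu$ to be $\sigma$-finite while the statement allows a general measure space; this is harmless where the theorem is applied in the paper, since there $\mu$ is a finite measure.
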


\section{The Markov control model}
\label{Sec-2}
The main goal of this section is to introduce the parameters defining the control model with a brief presentation of the construction of the controlled process.
We will present different types of properties for a control model, such as being (uniformly) absorbing and atomless. 
We will also introduce the notion of occupation measure, which has been studied in detail in \cite{fra-tom2024,fra-tom2025,piunovskiy24} for absorbing models and
which will play a fundamental role in the proof techniques we will use in this paper.

\subsection{The model.}
\label{Description-model}
A control model  $\mathsf{M}$ consists of the following parameters.
\begin{itemize}
\item A state space $\XX$ endowed with a $\sigma$-algebra $\bfrak{X}$. The set $\XX$ will always be endowed with the $\sigma$-algebra $\bfrak{X}$ unless explicitly stated, in which case the context will be specified.
\item A Borel space $\mathbf{A}$, representing the action space.
\item A family of nonempty measurable sets $\AA(x)\subseteq \AA$ for $x\in\XX$. The set $\AA(x)$ gives the admissible actions in state~$x$.
Let $\KK=\{(x,a)\in\XX\times\AA: a\in \AA(x)\}$ be the family of feasible state-action pairs.
\item A stochastic kernel $Q$ on $\mathbf{X}$ given $\big(\XX\times\AA,\bfrak{X}\otimes\bfrak{B}(\AA)\big)$ which stands for the transition probability function. 
\item An initial distribution given by $\eta\in\bcal{P}(\XX)$.
\item A bounded reward vector-function $r\in\bcal{L}^{0}_{\RR^{d}}(\XX\times\AA)$ for $d\in\NN^*$ that will be used to evaluate the performance of a control policy.
\end{itemize}
We often write a model $\mathsf{M}$ as $\mathsf{M}=(\mathbf{X},\mathbf{A},\{\mathbf{A}(x)\}_{x \in \mathbf{X}},Q,\eta,r)$ to specify the notations used for the parameters of this model.

\begin{definition}
We will say that a model $\mathsf{M}=(\mathbf{X},\mathbf{A},\{\mathbf{A}(x)\}_{x\in \mathbf{X}},Q,\eta,r)$ satisfies the \textit{measurability conditions} if the following properties holds.
\begin{List-perso}{M}
\item \label{K-measurability} $\KK$ is a measurable subset of $\big(\XX\times\AA,\bfrak{X}\otimes\bfrak{B}(\AA)\big)$.
\item \label{selector-theta} There exists a function $\theta\in\bcal{L}^{0}_{\AA}(\XX,\bfrak{X})$ whose graph is a subset of $\KK$.
\end{List-perso}
\end{definition}

\bigskip

Let us denote by $\mathbb{D}$ the set of functions $\phi\in \bcal{L}^{0}_{\AA}(\XX,\bfrak{X})$ satisfying $\phi(x)\in\AA(x)$ for any $x\in \XX$ and
by $\mathbb{S}$ (respectively, $\mathbb{C}$) the set of (respectively, finitely supported) stochastic kernels $\varphi$ on $\AA$ given $\XX$ satisfying $\varphi(\AA(x)|x)=1$
for any $x\in \XX$.
We will write $\mathbb{C}_{p}$ for the set of finitely supported kernels in $\mathbb{C}$ of order $p\in\NN^{*}$.

\bigskip

Let us consider now an arbitrary but fixed model $\mathsf{M}=(\mathbf{X},\mathbf{A},\{\mathbf{A}(x)\}_{x\in \mathbf{X}},Q,\eta,r)$ satisfying the measurability conditions.
The space of admissible histories of the controlled process up to time $n\in\NN$ is denoted by $\HH_{n}$. It is defined recursively by 
$\HH_0=\XX\quad\hbox{and}\quad
 \HH_n=\HH_{n-1}\times\AA\times\XX$ for $n\ge1$, all endowed with their corresponding product $\sigma$-algebras.
 A control policy $\pi$ is a sequence $\{\pi_n\}_{n\ge0}$ of stochastic kernels on $\AA$ given $\HH_n$, denoted by $\pi_n(da|h_n)$, such that
$\pi_n(\AA(x_n)|h_n)=1$ for each $n\ge0$ and $h_n=(x_0,a_0,\ldots,x_n)\in\HH_n$. 
The set of all policies is denoted by $\mathbf{\Pi}$. 
\begin{definition}
\label{Def-policies}
We now present several specific families of policies that will be used in the remainder of this work.
\begin{itemize}
\item
A policy $\pi=\{\pi_{n}\}_{n\in\NN}\in \mathbf{\Pi}$ is called a \emph{randomized Markov policy} if there exists a sequence $\{\gamma_{n}\}_{n\in\NN}\in\mathbb{S}$ satisfying
$\pi_{n}(\cdot |h_{n})=\gamma_{n}(\cdot |x_{n})$ for any $h_{n}=(x_{0},a_{0},\ldots,x_{n})\in \mathbf{H}_{n}$ and $n\in \NN$.
\item
A policy $\pi=\{\pi_{n}\}_{n\in\NN}$ is called a \emph{randomized stationary policy} if there exists $\varphi\in\mathbb{S}$ satisfying
$\pi_{n}(\cdot |h_{n})=\varphi(\cdot |x_{n})$ for any $h_{n}=(x_{0},a_{0},\ldots,x_{n})\in \mathbf{H}_{n}$ and $n\in \NN$.
By a slight abuse of notation, we will write $\mathbb{S}$ for the set of all randomized stationary policies.
\item
A policy $\pi=\{\pi_{n}\}_{n\in\NN}$ is called a \emph{chattering Markov policy} if for any $n\in\NN$, 
there exists a finitely supported kernel $\varphi_{n}\in\mathbb{C}$ satisfying
$\pi_{n}(\cdot |h_{n})=\varphi_{n}(\cdot |x_{n})$ for any $h_{n}=(x_{0},a_{0},\ldots,x_{n})\in \mathbf{H}_{n}$ and $n\in \NN$.
\item
A policy $\pi=\{\pi_{n}\}_{n\in\NN}$ is called a \emph{chattering stationary policy} if for any $n\in\NN$, 
there exists a finitely supported kernel $\varphi\in\mathbb{C}$ satisfying
$\pi_{n}(\cdot |h_{n})=\varphi(\cdot |x_{n})$ for $h_{n}=(x_{0},a_{0},\ldots,x_{n})\in \mathbf{H}_{n}$ and $n\in \NN$.
A chattering stationary policy defined by $\varphi\in\mathbb{C}$ will be said of order $p\in\NN^{*}$ if $\varphi$ is of order $p$.
By a slight abuse of notation, we will write $\mathbb{C}_{p}$ for the set of all chattering stationary policies of order $p$ for some $p\in\NN^{*}$.
\item
A policy $\pi=\{\pi_{n}\}_{n\in\NN}\in \mathbf{\Pi}$ is called a \emph{deterministic Markov policy} if there exists a sequence $\{\phi_{n}\}_{n\in\NN}\in\mathbb{S}$ satisfying
$\pi_{n}(\cdot |h_{n})=\delta_{\phi_{n}(x_{n})}(\cdot)$ for any $h_{n}=(x_{0},a_{0},\ldots,x_{n})\in \mathbf{H}_{n}$ and $n\in \NN$.
\item
A policy $\pi=\{\pi_{n}\}_{n\in\NN}\in \mathbf{\Pi}$ is called a \emph{deterministic stationary policy} if there exists $\phi\in\mathbb{D}$ satisfying
$\pi_{n}(\cdot |h_{n})=\delta_{\phi(x_{n})}(\cdot)$ for any $h_{n}=(x_{0},a_{0},\ldots,x_{n})\in \mathbf{H}_{n}$ and $n\in \NN$.
We will identify the set of deterministic stationary policies with the set $\mathbb{D}$. We will therefore use $\mathbb{D}$ to designate these two sets.
\end{itemize}
\end{definition}

For each $\sigma\in\mathbb{S}$, we write 
$\ds Q_{\sigma}(D|x)= \int_{\AA} Q(D|x,a)\sigma(da|x)$
for $x\in\XX$ and $D\in\bfrak{X}$.
The compositions of $Q_\sigma$ with itself are denoted by $Q^t_{\sigma}$ for any $t\ge0$, with the convention that $Q^0_\sigma(\cdot|x)$ is the identity kernel $\mathbb{I}_{\XX}$ on $\XX$.
We also write $\ds g_{\sigma}(x)= \int_{\AA} g(x,a)\sigma(da|x)$ for $x\in\XX$ and a measurable function $g$ from $\XX\times\AA$ to $\RR$ provided this integral is well-defined.
For any Markov randomized policy $\pi=(\pi_{k})_{k\in\NN}$, we write $Q_{\pi}^{(0)}$ for the identity kernel on $\XX$ and 
$Q_{\pi}^{(t)}$ for the product of the transition kernels from step $0$ to $t-1$ for $t\in\NN^{*}$, that is, 
$$Q_{\pi}^{(t)}=\begin{cases}
Q_{\pi_{0}}Q_{\pi_{1}}\cdots Q_{\pi_{t-1}} & \text{for } t\geq 1,\\
\mathbb{I}_{\XX} & \text{for } t=0.
\end{cases}
$$

The canonical space of all possible sample paths of the state-action process is 
$\mathbf{\Omega}=(\XX\times\AA)^{\infty}$ endowed with the product $\sigma$-algebra $\bcal{F}$. 
The coordinate projection functions from $\mathbf{\Omega}$ to the state space $\XX$, the action space $\AA$, and $\HH_n$ for $n\ge0$ are respectively denoted   by $X_{n}$, $A_{n}$, and $H_n$. 
We will refer  to $\{X_{n}\}_{n\in \NN}$ as the state process and $\{A_{n}\}_{n\in \NN}$ as the action  process.
It is a well known result that for every policy $\pi \in \mathbf{\Pi}$ 
there exists a unique probability
measure $\mathbb{P}_{\pi}$ on $(\mathbf{\Omega},\bcal{F})$
such that $ \mathbb{P}_{\pi} (\KK^{\infty})=1$ and such that for every $n\in\NN$, $\Gamma\in \bfrak{X}$, and  $ \Lambda\in \bfrak{B}(\mathbf{A})$ we have 
$\mathbb{P}_{\pi}(X_{0}\in \Gamma)=\eta(\Gamma)$, 
$$\mathbb{P}_{\pi}(X_{n+1}\in \Gamma\mid H_{n},A_{n}) = Q(\Gamma\mid X_{n},A_{n}) \quad\hbox{and}\quad
\mathbb{P}_{\pi}(A_{n}\in \Lambda\mid H_{n})= \pi_{n}(\Lambda\mid H_{n})$$
with
$\mathbb{P}_{\pi}$-probability one.
We will refer to $\mathbb{P}_\pi$ as to a \emph{strategic probability measure}, and we will denote by $\bcal{S}=\{\mathbb{P}_{\pi}\}_{\pi\in\mathbf{\Pi}}\subseteq\bcal{P}(\mathbf{\Omega})$
the set of all strategic probability measures. 
The expectation with respect to  $\mathbb{P}_{\pi}$ is denoted by $\mathbb{E}_{\pi}$. For a $p$-dimensional random variable $Z=(Z_{1},\ldots,Z_{i},\ldots,Z_{p})$ with $p\in\NN^{*}$,
we will write $\mathbb{E}_{\pi}[Z]$ for the vector $\big(\mathbb{E}_{\pi}[Z_{1}],\ldots,\mathbb{E}_{\pi}[Z_{i}],\ldots,\mathbb{E}_{\pi}[Z_{p}]\big)$ when $\mathbb{E}_{\pi}[Z_{i}]$ is well defined for any $i\in\{1,\ldots,p\}$.

\subsection{Absorbing and atomless models}
In this section, we consider an arbitrary but fixed model $\mathsf{M}=(\mathbf{X},\mathbf{A},\{\mathbf{A}(x)\}_{x\in \mathbf{X}},Q,\eta,r)$ satisfying the measurability conditions.
Essentially, we are going to introduce different type of control models such as absorbing and atomless models. To do this, we will need to define the notion of occupation measures associated with a control policy.

\begin{definition}
\label{absorbing}
We say that the control model  $\mathsf{M}$ is $\Delta$-absorbing if $\Delta\in\bfrak{X}$ and the following conditions are satisfied
\begin{enumerate}
\item[$\bullet$] $Q(\Delta|x,a)=1$ and $r(x,a)=0_{d}$ for $(x,a)\in\Delta\times\AA$.
\item[$\bullet$] $\mathbb{E}_{\pi}[T_\Delta]$ is finite for any $\pi\in\mathbf{\Pi}$.
\end{enumerate}
and we say that  it is uniformly $\Delta$-absorbing if, additionally, the following condition holds
\begin{enumerate}
\item[$\bullet$] $\ds \lim_{n\rightarrow\infty} \sup_{\pi\in\mathbf{\Pi}} \sum_{t=n}^\infty\mathbb{P}_{\pi}\{T_\Delta>t\}=0$.
\end{enumerate}
where $T_\Delta:\mathbf{\Omega}\rightarrow\NN\cup\{\infty\}$ is defined by
$T_\Delta(\omega)=\min\{n\ge0: X_{n}(\omega)\in\Delta\},$ where by convention the $\min$ over the empty set is defined as $+\infty$.
\end{definition}
The notion of a uniformly $\Delta$-absorbing MDP was introduced in \cite[Definition 3.6]{piunovskiy19}). 

\bigskip

We define now the occupation measures of a $\Delta$-absorbing control model $\mathsf{M}$. 
\begin{definition}\label{def-occupation-measure}
For a $\Delta$-absorbing control model $\mathsf{M}$, the  occupation measure $\mu_{\pi}\in\MM^{+}(\XX\times\AA)$ of a control policy $\pi\in\mathbf{\Pi}$ is 
$$\mu_{\pi}(\Gamma) = \mathbb{E}_{\pi}\Big[ \sum_{t=0}^\infty \mathbf{I}_{\{T_\Delta>t\}}\cdot\mathbf{I}_{\{(X_t,A_{t})\in \Gamma\}}\Big] \quad\hbox{for $\Gamma\in\bfrak{X}\otimes\mathfrak{B}(\AA)$.}$$
\end{definition}
\begin{remark}
\begin{enumerate}[label=(\alph*)]
\item The measure $\mu_{\pi}$ is indeed a finite measure since $\mu_{\pi}(\XX\times\AA)= \mathbb{E}_{\pi}[T_{\Delta}]<+\infty$
since $\mathsf{M}$ is by assumption $\Delta$-absorbing.
\item Observe that for an arbitrary Markov randomized policy $\pi=\{\pi_{k}\}_{k\in\NN}$, the occupation measure $\mu_{\pi}$ takes the form
\begin{align}
\mu_{\pi} = \sum_{j\in\NN} \eta Q^{(j)}_{\pi} \mathbb{I}_{\Delta^{c}}\otimes \pi_{j}.
\label{Def-occup-meas}
\end{align}
Most of the time we will use this equation without quoting it explicitly.
\end{enumerate}
\end{remark}
We will write $\bcal{O}=\{\mu_{\pi}:\pi\in\mathbf{\Pi}\}$ for the set of occupation measures. We will also consider the following subsets of $\bcal{O}$ denoted by 
$$\bcal{O}_{\Lambda}=\{\mu_{\pi}:\pi\in\Lambda\}$$ 
for the set of policies $\Lambda$ given by $\mathbb{S}$, $\mathbb{D}$ and $\mathbb{C}_{p}$ for some $p\in\NN^{*}$.

\begin{remark}
\begin{enumerate}[label=(\alph*)]
\item Observe that the subset $\bcal{O}$ of the vector space $\MM(\XX\times\AA)$ is convex.
This well known property has been established in the special case of a Borel state space see for example Corollary 4.3 in \cite{feinberg12} and Proposition 3 in \cite{piunovskiy24}.
For the general case of a measurable state space, we can proceed by using similar arguments in showing that the set of strategic measures $ \bcal{S}$ is convex  implying the convexity of $\bcal{O}$.
\item It is also important to recall the following well-known property: $\bcal{O}=\bcal{O}_{\mathbb{S}},$
see for example Theorem 8.1 in \cite{altman99} for a countable state space, Lemma 4.1 in \cite{feinberg12} or Lemma 2 in \cite{piunovskiy24} for a Borel state space
and Proposition 3.3(ii) in \cite{fra-tom2024} for a general measurable state space.
\end{enumerate}
\end{remark}

We will need of the following simple result that says that for two Markov policies which both apply the same fixed control, except at a certain stage where they apply a different control for each of them,
then the corresponding occupation measures are not equal provided that the distribution of the state process at this stage charges the set of states for which these policies are different.
\begin{lemma}
\label{Castaing-Valadier-1}
Assume that the Markov controlled model $\mathsf{M}$ is $\Delta$-absorbing.
Let $\gamma\in\mathbb{S}$ and $N\in\NN$. Consider two Markov policies $\sigma^{1}=\{\sigma^{1}_n\}_{n\ge0}$ and $\sigma^{2}=\{\sigma^{2}_n\}_{n\ge0}$ satisfying
$\sigma^{1}_{k}(\cdot |x)=\sigma^{2}_{k}(\cdot |x)=\gamma(\cdot | x)$ for $k\neq N$. Define $\Gamma=\{x\in\XX : \sigma^{1}_{N}(\cdot |x)\neq \sigma^{2}_{N}(\cdot |x)\}$.
We have $\Gamma\in\bfrak{X}$ and if $\eta Q_{\gamma}^{N}(\Gamma)>0$, then $\mu_{\sigma^{1}} \neq \mu_{\sigma^{2}}$.
\end{lemma}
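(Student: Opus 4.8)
The plan is to first establish measurability of $\Gamma$, and then to quantify the signed measure $\mu_{\sigma^1}-\mu_{\sigma^2}$ by splitting the occupation-measure series \eqref{Def-occup-meas} at the single stage $N$ at which the two Markov policies may differ. For measurability, since $\AA$ is a Borel space I would fix a countable family $\{B_k\}_{k\in\NN}\subset\bfrak{B}(\AA)$ forming a $\pi$-system that generates $\bfrak{B}(\AA)$, so that two probability measures on $\AA$ coincide if and only if they agree on every $B_k$. As $\sigma^1_N,\sigma^2_N\in\mathbb{S}$ are stochastic kernels, each map $x\mapsto\sigma^i_N(B_k|x)$ is $\bfrak{X}$-measurable, whence $\Gamma=\bigcup_{k}\{x\in\XX:\sigma^1_N(B_k|x)\neq\sigma^2_N(B_k|x)\}$ is a countable union of measurable sets, hence in $\bfrak{X}$.

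For the inequality I would split the series into the stages $j<N$, $j=N$ and $j>N$. Because $\sigma^1_k=\sigma^2_k=\gamma$ for $k\neq N$, one has $Q^{(j)}_{\sigma^1}=Q^{(j)}_{\sigma^2}=Q_\gamma^{j}$ for every $j\le N$: the stages $j<N$ give identical terms $\eta Q_\gamma^{j}\mathbb{I}_{\Delta^c}\otimes\gamma$, while the stage $j=N$ gives $\eta Q_\gamma^N\mathbb{I}_{\Delta^c}\otimes\sigma^i_N=\lambda\otimes\sigma^i_N$ with common state part $\lambda:=\eta Q_\gamma^N\mathbb{I}_{\Delta^c}\in\MM^+(\XX)$. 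For $j>N$ the action kernel is again $\gamma$, and writing $\nu^i:=\eta Q_\gamma^N Q_{\sigma^i_N}=\eta Q^{(N+1)}_{\sigma^i}$ one has $\eta Q^{(N+1+k)}_{\sigma^i}=\nu^i Q_\gamma^{k}$, so the tail equals $\sum_{k\ge0}\nu^i Q_\gamma^k\mathbb{I}_{\Delta^c}\otimes\gamma=m_i\otimes\gamma$ with $m_i:=\sum_{k\ge0}\nu^i Q_\gamma^k\mathbb{I}_{\Delta^c}$. Setting $m:=m_1-m_2$, this yields
\[
\mu_{\sigma^1}-\mu_{\sigma^2}=\lambda\otimes(\sigma^1_N-\sigma^2_N)+m\otimes\gamma .
\]

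I would then separate the two summands through the $\XX$-marginal. The first summand has vanishing state marginal, since $(\sigma^1_N-\sigma^2_N)(\AA|x)=1-1=0$; therefore $(\mu_{\sigma^1}-\mu_{\sigma^2})^{\XX}=m$. If $m\neq0$ the state marginals already differ and we are done. If $m=0$, then $\mu_{\sigma^1}-\mu_{\sigma^2}=\lambda\otimes(\sigma^1_N-\sigma^2_N)$ and it remains to exhibit this as nonzero. Using the generating family above, $\Gamma\cap\Delta^c=\bigcup_k\Gamma_k$ with $\Gamma_k=\{x\in\Delta^c:\sigma^1_N(B_k|x)\neq\sigma^2_N(B_k|x)\}$; since $\lambda(\Gamma)=\eta Q_\gamma^N(\Gamma\cap\Delta^c)>0$, some $\Gamma_k$ has $\lambda(\Gamma_k)>0$, and splitting $\Gamma_k$ according to the sign of $\sigma^1_N(B_k|\cdot)-\sigma^2_N(B_k|\cdot)$ produces a measurable set $\Gamma_k^{+}$ with $\lambda(\Gamma_k^{+})>0$ on which that difference is strictly positive. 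Consequently $\big(\lambda\otimes(\sigma^1_N-\sigma^2_N)\big)(\Gamma_k^{+}\times B_k)>0$, so the difference of occupation measures is nonzero.

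The main obstacle is precisely the possibility that the stage-$N$ action discrepancy is exactly cancelled by the tail term $m\otimes\gamma$ arising from the altered state law at stages $>N$; the device of reading off the state marginal, which annihilates the stage-$N$ summand and isolates $m$, is what disentangles the two contributions and reduces the problem to the clean case $m=0$. One point to verify carefully is that the discrepancy set is charged on $\Delta^c$, that is $\eta Q_\gamma^N(\Gamma\cap\Delta^c)>0$: this is the genuine content of the hypothesis, because any part of $\Gamma$ lying in $\Delta$ contributes nothing to either occupation measure (actions in $\Delta$ affect neither the reward nor, thanks to $Q(\Delta|x,a)=1$, the subsequent dynamics) and may therefore be discarded.
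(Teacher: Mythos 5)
Your proof is correct and essentially the same as the paper's: the identical stage-$N$ decomposition of \eqref{Def-occup-meas}, the same reduction via the $\XX$-marginal to the case where the tail terms cancel, and a countable separating device to locate a set on which the stage-$N$ discrepancy has a strict sign (the paper separates the measures $\sigma^{1}_{N}(\cdot|x)$ and $\sigma^{2}_{N}(\cdot|x)$ with a countable family of functions on $\AA$ via \cite{bogachev07}, you with a countable generating $\pi$-system of sets; the two devices are interchangeable, yours slightly more elementary). Your closing remark is also apt: the hypothesis must indeed be read as $\eta Q_{\gamma}^{N}(\Gamma\cap\Delta^{c})>0$, which is exactly what the paper's own proof uses (it invokes $\eta Q_{\gamma}^{N}\mathbb{I}_{\Delta^{c}}(\Gamma)>0$ without comment) and what holds in every application of the lemma, where positivity always comes from $\mu_{\gamma}^{\XX}$, a measure that charges only $\Delta^{c}$.
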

\begin{proof}
Note that
$\ds \mu_{\sigma^{i}} = \sum_{k=0}^{N-1} \eta Q_{\gamma}^{k} \mathbb{I}_{\Delta^c} \otimes \gamma + \eta Q_{\gamma}^{N} \mathbb{I}_{\Delta^c} \otimes\sigma^{i}_{N}
+\eta Q_{\gamma}^{N}  Q_{\sigma^{i}_{N}} \sum_{j=0}^{\infty} Q_{\gamma}^{j}  \mathbb{I}_{\Delta^c} \otimes \gamma$
for $i=1,2$ from equation \eqref{Def-occup-meas}.
If $\mu_{\sigma^{1}}^{\XX} \neq \mu_{\sigma^{2}}^{\XX}$ then we get the claim. Now, if $\mu_{\sigma^{1}}^{\XX} = \mu_{\sigma^{2}}^{\XX}$ 
then it follows that $\ds \eta Q_{\gamma}^{N}  Q_{\sigma^{1}_{N}} \sum_{j=0}^{\infty} Q_{\gamma}^{j} \mathbb{I}_{\Delta^c}
= \eta Q_{\gamma}^{N} Q_{\sigma^{2}_{N}} \sum_{j=0}^{\infty} Q_{\gamma}^{j} \mathbb{I}_{\Delta^c}$
and so, $\mu_{\sigma^{1}}-\mu_{\sigma^{2}} =  \eta Q_{\gamma}^{N} \mathbb{I}_{\Delta^c} \otimes [\sigma^{1}_{N} - \sigma^{2}_{N}].$
From Theorem 8.10.39 in \cite{bogachev07}, $\MM(\AA)$ is countably separated since the points in $\AA$ can be separated by a countable family of continuous functions on $\AA$
(see Theorem 6.7.7 in \cite{bogachev07}).
Therefore, there exists a countable family of functions $\{h_{n}\}_{n\in\NN}$ on $\AA$ satisfying
$\ds \Gamma=\union_{n\in\NN}\{ x\in\XX : \sigma^{1}_{N} h_{n}(x)\neq \sigma^{2}_{N} h_{n}(x)\}$
and so, $\Gamma\in\bfrak{X}$.
Since $\eta Q^{N}_{\gamma} \mathbb{I}_{\Delta^c} (\Gamma)>0$, there exists $j\in\NN$ such that
either $\eta Q^{N}_{\gamma}\mathbb{I}_{\Delta^c}(\Gamma_{j}^{+}) >0$ or  $\eta Q^{N}_{\gamma}\mathbb{I}_{\Delta^c} (\Gamma_{j}^{-})>0$
where $\Gamma_{j}^{+}=\{x\in\Gamma : \sigma^{1}_{N} h_{j}(x) > \sigma^{2}_{N} h_{j}(x)\}$
and $\Gamma_{j}^{-}=\{x\in\Gamma : \sigma^{1}_{N} h_{j}(x) < \sigma^{2}_{N} h_{j}(x)\}$.
Therefore, it follows that either $\mu_{\sigma^{1}} (\mathbf{I}_{\Gamma_{j}^{+}}h_{j}) >\mu_{\sigma^{2}} (\mathbf{I}_{\Gamma_{j}^{+}}h_{j})$ or
$\mu_{\sigma^{1}} (\mathbf{I}_{\Gamma_{j}^{-}}h_{j}) < \mu_{\sigma^{2}} (\mathbf{I}_{\Gamma_{j}^{-}}h_{j})$
showing the result.
\end{proof}

Now, we introduce the so-called characteristic equation. 
\begin{definition} Suppose that the model $\mathsf{M}$ is $\Delta$-absorbing.
A measure $\mu$ in $\MM^{+}(\XX\times\AA)$ is a solution of the characteristic equations when 
$\mu(\KK^c)=0$ and $\mu^\XX=(\eta+\mu Q)\mathbb{I}_{\Delta^c}$.
We denote by $\bcal{C}$ the set of all solutions of the characteristic equations.
\end{definition}

In the special case of a $\Delta$-absorbing model, the next lemma provides a link between occupation measures and the solutions of the characteristic equation.
We refer the reader to \cite{fra-tom2024}  for a more detailed analysis of these links.
\begin{lemma}
\label{Equivalence-occup-measure-1}
Suppose that the Markov controlled model $\mathsf{M}$ is $\Delta$-absorbing.
Let $\mu\in\bcal{M}(\XX\times\AA)$. Then,
$$  \mu \in \bcal{O}  \iff \Big[\mu \in \bcal{C} \text{ and } \mu^{\XX}\ll \mu^{\XX}_{\pi} \text{ for some } \pi\in\mathbf{\Pi} \Big].$$
\end{lemma}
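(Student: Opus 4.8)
The plan is to prove the two implications separately; the forward one is a routine balance computation, and the reverse one is where the real work lies.

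For the direction $\mu\in\bcal{O}\Rightarrow\big[\mu\in\bcal{C}\text{ and }\mu^\XX\ll\mu_\pi^\XX\big]$, write $\mu=\mu_\pi$. The absolute-continuity clause is then satisfied trivially by the very same $\pi$, since $\mu^\XX=\mu_\pi^\XX\ll\mu_\pi^\XX$. To see $\mu_\pi\in\bcal{C}$, note first that $\mu_\pi(\KK^c)=0$ because $\mathbb{P}_\pi(\KK^\infty)=1$, and then derive the characteristic equation as the usual flow identity: for $\Gamma\in\bfrak{X}$ with $\Gamma\subseteq\Delta^c$ one has $\mathbb{P}_\pi(X_0\in\Gamma)=\eta(\Gamma)$ and, for $t\ge1$, $\mathbb{P}_\pi(X_t\in\Gamma)=\mathbb{E}_\pi[\mathbf{I}_{\{X_{t-1}\in\Delta^c\}}Q(\Gamma\mid X_{t-1},A_{t-1})]$ (using $Q(\Delta\mid x,a)=1$ for $x\in\Delta$ together with $\Gamma\cap\Delta=\emptyset$). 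Summing over $t$ gives $\mu_\pi^\XX(\Gamma)=\eta(\Gamma)+\mu_\pi Q(\Gamma)$, and since $\mu_\pi^\XX$ is carried by $\Delta^c$ this is precisely $\mu^\XX=(\eta+\mu Q)\mathbb{I}_{\Delta^c}$.

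For the reverse direction I would first disintegrate. Since $\mu(\KK^c)=0$, Lemma \ref{lemma-disintegration} applies (its hypotheses follow from the measurability conditions \ref{K-measurability}--\ref{selector-theta}, taking $\kappa=\delta_{\{\theta(\cdot)\}}$) and yields $\sigma\in\mathbb{S}$ with $\mu=\mu^\XX\otimes\sigma$. From \eqref{Def-occup-meas} the stationary occupation measure factors the same way, $\mu_\sigma=\mu_\sigma^\XX\otimes\sigma$, so it suffices to prove $\mu^\XX=\mu_\sigma^\XX$: then $\mu=\mu^\XX\otimes\sigma=\mu_\sigma\in\bcal{O}$. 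Because $\mu Q=\mu^\XX Q_\sigma$ and $\mu_\sigma Q=\mu_\sigma^\XX Q_\sigma$, both $\mu^\XX$ (by the hypothesis $\mu\in\bcal{C}$) and $\mu_\sigma^\XX$ (by the forward direction applied to $\sigma$) are nonnegative finite solutions of the affine fixed-point equation $\nu=(\eta+\nu Q_\sigma)\mathbb{I}_{\Delta^c}$, equivalently $\nu=\eta_0+\nu\widehat{Q}$ with $\eta_0=\eta\mathbb{I}_{\Delta^c}$ and the killed kernel $\widehat{Q}=Q_\sigma\mathbb{I}_{\Delta^c}$. Iterating this relation shows that every nonnegative solution dominates the minimal one $\sum_{k\ge0}\eta_0\widehat{Q}^k$, which is precisely $\mu_\sigma^\XX$; hence $\rho:=\mu^\XX-\mu_\sigma^\XX\ge0$, and subtracting the two equations gives $\rho=\rho\widehat{Q}$, so $\rho=\rho\widehat{Q}^n$ for every $n$.

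Testing this last identity on $\XX$ gives $\rho(\XX)=\int_\XX u_n\,d\rho$, where $u_n(x)=\widehat{Q}^n(\XX\mid x)=\mathbb{P}_\sigma(T_\Delta>n\mid X_0=x)$; since $u_n\downarrow u_\infty$ with $u_\infty(x)=\mathbb{P}_\sigma(T_\Delta=\infty\mid X_0=x)$, dominated convergence yields $\rho(\XX)=\int_\XX u_\infty\,d\rho$. The step I expect to be the main obstacle is showing that $u_\infty=0$ holds $\mu^\XX$-almost everywhere, and this is exactly where the hypothesis $\mu^\XX\ll\mu_\pi^\XX$ is used. I would prove the stronger fact that $u_\infty$ vanishes $\mu_{\pi'}^\XX$-a.e. for \emph{every} policy $\pi'$: if the immortal set $B=\{u_\infty>0\}\subseteq\Delta^c$ satisfied $\mu_{\pi'}^\XX(B)>0$, then $\mathbb{P}_{\pi'}(X_{t_0}\in B)>0$ for some $t_0$, and the policy that follows $\pi'$ up to time $t_0$ and switches to $\sigma$ afterwards would have positive probability of never being absorbed, contradicting $\mathbb{E}[T_\Delta]<\infty$ from the $\Delta$-absorbing assumption. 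Applying this to the $\pi$ of the hypothesis and invoking $\mu^\XX\ll\mu_\pi^\XX$ gives $u_\infty=0$ $\mu^\XX$-a.e., hence $\rho$-a.e. since $0\le\rho\le\mu^\XX$; therefore $\rho(\XX)=\int_\XX u_\infty\,d\rho=0$, so $\rho=0$ and $\mu^\XX=\mu_\sigma^\XX$, which concludes the argument.
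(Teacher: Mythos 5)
Your proof is correct, and it reaches the result by a genuinely more self-contained route than the paper, whose own proof consists essentially of three citations to \cite{fra-tom2024}: Proposition 3.3(i) there for the forward inclusion $\bcal{O}\subset\bcal{C}$, Theorem 3.7(i) for the decomposition $\mu=\mu_\sigma+\vartheta$ with $\sigma\in\mathbb{S}$ and $\vartheta^\XX=\vartheta Q\mathbb{I}_{\Delta^c}$, and Proposition 3.8(i) to conclude that the hypothesis $\mu^\XX\ll\mu_\pi^\XX$ forces $\vartheta=0$. Your argument reconstructs exactly these ingredients from scratch: the flow identity gives the forward direction; the disintegration $\mu=\mu^\XX\otimes\sigma$ together with the minimal-solution iteration produces the excess $\rho=\mu^\XX-\mu_\sigma^\XX\ge0$ satisfying $\rho=\rho Q_\sigma\mathbb{I}_{\Delta^c}$, so that $\vartheta:=\rho\otimes\sigma$ is precisely the object the paper imports from Theorem 3.7(i); and your composite-policy argument on the immortal set $\{u_\infty>0\}$ (switching from $\pi$ to $\sigma$ at a time when the state charges that set would contradict $\mathbb{E}[T_\Delta]<\infty$, which the absorbing hypothesis demands of \emph{every} policy, not just stationary ones) is in effect a proof of the cited Proposition 3.8(i). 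The paper's route buys brevity; yours buys transparency, since it makes visible exactly where each hypothesis enters: absorption over the whole policy class kills immortal mass under any occupation measure, and the absolute-continuity clause transfers this to $\mu^\XX$ and hence to $\rho$. Two minor points deserve tightening. First, identifying the minimal solution $\sum_{k\ge0}\eta\mathbb{I}_{\Delta^c}(Q_\sigma\mathbb{I}_{\Delta^c})^k$ with $\mu_\sigma^\XX=\sum_{k\ge0}\eta Q_\sigma^k\mathbb{I}_{\Delta^c}$ uses the fact that $Q(\Delta|x,a)=1$ on $\Delta\times\AA$ (so trajectories cannot leave $\Delta$ and the two restrictions agree); this merits a sentence. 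Second, in the immortal-set step the event that must be charged is $\{X_{t_0}\in B,\,T_\Delta>t_0\}$ rather than $\{X_{t_0}\in B\}$; these coincide up to a null set only because $B\subset\Delta^c$ and $\Delta$ is absorbing, which again is worth saying explicitly.
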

\begin{proof}
We have only to show the reverse implication since $\bcal{O}\subset\bcal{C}$ by Proposition 3.3(i) in \cite{fra-tom2024}.
From Theorem 3.7(i) in \cite{fra-tom2024} there exists $\sigma\in\mathbb{S}$ such that $\mu=\mu_{\sigma}+\vartheta$ with 
$\vartheta\in\MM^{+}(\XX\times\AA)$ satisfying $\vartheta^\XX=\vartheta Q \mathbb{I}_{\Delta^c}$.
By hypothesis $\mu^{\XX}\ll \mu^{\XX}_{\pi}$, therefore, $\vartheta=0$ according to Proposition 3.8(i) in \cite{fra-tom2024} and so,
$\mu=\mu_{\sigma}\in\bcal{O}$. 
\end{proof}

The performance of a policy $\pi\in\mathbf{\Pi}$ is evaluated by using the following vector-function.
\begin{definition}
Suppose that the model $\mathsf{M}$ is $\Delta$-absorbing. The vector of expected rewards for the policy $\pi\in\mathbf{\Pi}$ denoted by $\mathcal{R}(\pi)$ is given by
$$\mathcal{R}(\pi)=\mathbb{E}_{\pi} \Big[ \sum_{t=0}^\infty r(X_t,A_t)\Big].$$
This defines an application from $\mathbf{\Pi}$ to $\RR^{d}$ called the the performance vector function.
A subset $\Lambda\subset\mathbf{\Pi}$ will be called a sufficient class (family) of policies if $\mathcal{R}(\Lambda)=\mathcal{R}(\mathbf{\Pi})$.
\end{definition}

\begin{remark}
Suppose that the model $\mathsf{M}$ is $\Delta$-absorbing.
For a bounded real-valued measurable function $g$ defined on $\XX\times\AA$ satisfying $g(x,a)=0$ for $x\in\Delta$ and an arbitrary Markov randomized policy $\pi=\{\pi_{k}\}_{k\in\NN}$
we have by using the identity $\mathbb{I}_{\Delta^{c}}g=g$ and equation \eqref{Def-occup-meas}
\begin{align*}
\mathbb{E}_{\pi} \Big[ \sum_{t=0}^\infty g(X_t,A_t)\Big] = \sum_{j\in\NN} \eta Q^{(j)}_{\pi} g_{\pi_{j}}.
\end{align*}
Therefore, $\mathcal{R}(\pi)=\mu_{\pi}(r)$ and $\| \mathcal{R}(\pi) \| \leq \|r\| \mathbb{E}_{\pi}[T_{\Delta}]< +\infty$
for a Markov randomized policy $\pi$.
Most often, these equations will be used implicitly without being mentioned.
\end{remark}

Finally, we present a definition of atomless model which is inspired by the definition introduced in \cite[Definition 2.1]{piunovskiy19} but adapted to the general framework of a measurable state space.
\begin{definition}\label{def-atomless-MDP}
The model $\mathsf{M}$ is called atomless if the measure $\eta\mathbb{I}_{\Delta^c}$ is atomless and if there exists a countably generated sub $\sigma$-algebra 
$\bfrak{X}_{0}$ of $\bfrak{X}$ such that $Q\mathbb{I}_{\Delta^c}(\cdot | x,a)$  is atomless on $\bfrak{X}_{0}$ for any $(x,a)\in\Delta^c\times\AA$.
\end{definition}

\begin{remark}
Note that in the special case where $\XX$ is a Borel space, the previous definition coincides with Definition 2.1 in \cite{piunovskiy19}.
\end{remark}

\begin{remark}
There is no loss of generality to assume that the action space $\mathbf{A}$ is compact. Indeed, if this is not the case, observe that $\mathbf{A}$
is homeomorphic to a Borel subset of a compact metric space denoted by $\hat{\AA}$. 
The sets $\AA(x)$ for $x\in\XX$  are then measurable subsets of $\hat{\AA}$.
The stochastic kernel $Q$ can be extended to a Markov kernel $\hat{Q}$ on $\mathbf{X}$ given $\XX\times\hat{\AA}$ by defining
$\hat{Q}(dy|x,a)=Q(dy|x,a)$ for $(x,a)\in\XX\times\AA$ and $\hat{Q}(dy|x,a)=\eta(dy)$ for $(x,a)\in\XX\times(\hat{\AA}\setminus\AA)$ and the reward function $r$ can be extended to a function $\hat{r}$ given by
$\hat{r}(x,a)=r(x,a)$ for $(x,a)\in\XX\times\AA$ and $\hat{r}(x,a)=0$ for $(x,a)\in\XX\times(\hat{\AA}\setminus\AA)$.
Clearly, $\KK$ is a measurable subset of $\XX\times\hat{\AA}$. Let us write $\hat{\bcal{S}}$ for the set of strategic measures for the model 
$\hat{\mathsf{M}}=(\mathbf{X},\hat{\mathbf{A}},\{\mathbf{A}(x)\}_{x\in \mathbf{X}},\hat{Q},\eta,\hat{r})$.
Observe that for any $\hat{\mathbb{P}}\in \hat{\bcal{S}}$, the restriction of $\hat{\mathbb{P}}$ to $\KK^{\infty}$ coincides with some $\mathbb{P}\in \bcal{S}$ and conversely, for any $\mathbb{P}\in \bcal{S}$,
there exists  $\hat{\mathbb{P}}\in \hat{\bcal{S}}$ such that the restriction of $\hat{\mathbb{P}}$ to $\KK^{\infty}$ coincides with $\mathbb{P}$.
Therefore, the models $\mathsf{M}$ and $\hat{\mathsf{M}}$ can be indentified.
It is also easy to show that if the model $\mathsf{M}$ is (uniformly) $\Delta$-absorbing, so is $\hat{\mathsf{M}}$ and that if $\mathsf{M}$ is atomless, so is  $\hat{\mathsf{M}}$.
In the rest of this paper we will assume that $\mathbf{A}$ is compact.
This extension will allow us to use the results obtained in \cite{fra-tom2024} for $\Delta$-absorbing models.
\end{remark}

Throughout the rest of this paper, we will use a fixed model $\mathsf{M}=(\mathbf{X},\mathbf{A},\{\mathbf{A}(x)\}_{x\in \mathbf{X}},Q,\eta,r)$ satisfying the measurability conditions.
To avoid presenting the results in a too cumbersome manner, we will not repeat in the following that the model $\mathsf{M}$ is assumed to satisfy these conditions.
For technical reasons, we will also have to consider other models which will most often consist of modifying the space of admissible actions of the model $\mathsf{M}$.

\section{Extreme points of constrained sets of occupation measures}
\label{Sec-3}
n this section, we will characterize the extreme points of the set of occupation measures $\mu$ satisfying integral type constraints of the form $\mu(r)=\alpha$ with $\alpha\in\RR^d$
for $\Delta$-absorbing models but also for models that are both $\Delta$-absorbing and atomless.

For $\alpha\in\RR^p$ and a bounded measurable function $g:\XX\times\AA\mapsto\RR^p$ with $p\in\NN^*$, let us denote by $\bcal{H}(g,\alpha)$ the affine hyperplane of the vector space 
$\bcal{M}(\XX\times\AA)$ defined by 
\begin{eqnarray*}
\bcal{H}(g,\alpha)=\{\mu\in\MM(\XX\times\AA): \mu(g)=\alpha\}.
\end{eqnarray*}
We write $\bcal{O}(g,\alpha)=\bcal{O}\inter\bcal{H}(g,\alpha)$ and also,
$\bcal{O}_{\Lambda}(g,\alpha)=\bcal{O}_{\Lambda}\inter\bcal{H}(g,\alpha)$ for $\Lambda$ given by $\mathbb{S}$, $\mathbb{D}$ and $\mathbb{C}_{p+1}$ for $p\in\NN^*$.
In section \ref{Case-absorbing}, we will establish that for a $\Delta$-absorbing model every extreme point of $\bcal{O}(g,\alpha)$ is necessarily given by an occupation measure induced by a chattering stationary policy of order $d+1$, that is,
$\ext(\bcal{O}(g,\alpha))\subset \bcal{O}_{\mathbb{C}_{d+1}}$ (see the Main Theorem in \ref{Dubins}). 
In section \ref{Case-absorbing+atomless}, we will show that the extreme points of $\bcal{O}(g,\alpha)$ are precisely the occupation measures generated by deterministic stationary policies, that is,
$\ext(\bcal{O}(g,\alpha))=\bcal{O}_{\mathbb{D}}(g,\alpha)$ if in addition, the model $\mathsf{M}$ is atomless.

\bigskip

The results of section \ref{Case-absorbing} and \ref{Case-absorbing+atomless} rely heavily on the following preliminary proposition.
The first item states that any stochastic kernel $\gamma\in\mathbb{S}$ can be written as a convex combination of two stochastic kernels $\widebar{\gamma}$ and $\widehat{\gamma}$ in $\mathbb{S}$ such that
the set of points $x\in\XX$ satisfying $\widebar{\gamma}(\cdot | x)\neq \widehat{\gamma}(\cdot | x)$ are, loosely speaking, those for which $\gamma(\cdot | x)$ is not given by a Dirac measure.
To establish it, we will use a general result by Castaing and Valadier (see Theorem IV.14 in \cite{castaing77}) that characterize the extreme points of the set of selectors of a multifunction with convex values.
The second item says that when this set is a $\mu_{\gamma}^{\XX}$-null set then $\mu_{\gamma}$ is induced by a deterministic stationary policy, that is, $\mu_{\gamma}\in\bcal{O}_{\mathbb{D}}$.
\begin{proposition}
\label{Castaing-Valadier-2}
Assume that the Markov controlled model $\mathsf{M}$ is $\Delta$-absorbing.
For $\gamma\in\mathbb{S}$, let us write $\bfrak{X}_{\gamma}$ for the $\mu_{\gamma}^{\XX}$-completion of $\bfrak{X}$
and define $\mathbf{\Lambda}=\big\{x\in\XX : \gamma(\cdot |x)\in \mathbf{\Gamma}(x)\setminus\ext(\mathbf{\Gamma}(x)) \big\}$
where $\mathbf{\Gamma}(x)=\{\lambda\in\bcal{P}(\AA) : \lambda(\AA(x))=1 \}$.
\begin{enumerate}[label=(\alph*)]
\item \label{Existence-gamma1-2} The set $\mathbf{\Lambda}$ is $\bfrak{X}_{\gamma}$-measurable and there exist $\gamma_{1}$ and $\gamma_{2}$ in $\mathbb{S}$ such that
\begin{align}
\label{gamma-property-2}
\gamma+ \frac{1}{2} [\gamma_{1}-\gamma_{2}]\in\mathbb{S} \text{ and }\gamma- \frac{1}{2} [\gamma_{1}-\gamma_{2}]\in\mathbb{S}.
\end{align}
Moreover, $\mathbf{\Xi}=\{x\in\XX : \gamma_{1}(\cdot |x)\neq \gamma_{2}(\cdot |x)\}$ is a $\bfrak{X}$-measurable subset of $\mathbf{\Lambda}$ and $\mathbf{\Lambda}\setminus\mathbf{\Xi}$
is a $\mu_{\gamma}^{\XX}$-null set.
\item \label{Lambda-null-set} If $\mathbf{\Lambda}$ is $\mu_{\gamma}^{\XX}$-null set then $\mu_{\gamma}\in\bcal{O}_{\mathbb{D}}$, that is,
$\mu_{\gamma}=\mu_{\phi}$ for some $\phi\in\mathbb{D}$.
\end{enumerate}
\end{proposition}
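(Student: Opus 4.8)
The plan is to handle the two items in turn, with the multifunction $x\mapsto\mathbf{\Gamma}(x)=\{\lambda\in\bcal{P}(\AA):\lambda(\AA(x))=1\}$ and the Castaing--Valadier extreme-point theorem doing the heavy lifting in item \ref{Existence-gamma1-2}. The first observation is that the extreme points of the convex set $\mathbf{\Gamma}(x)$ are exactly the Dirac measures $\delta_{a}$ with $a\in\AA(x)$, so that $\mathbf{\Lambda}$ is precisely the set of states at which $\gamma(\cdot|x)$ is \emph{not} a Dirac measure. To obtain measurability of $\mathbf{\Lambda}$ cheaply I would fix a bounded metric $\rho$ on the compact space $\AA$ and consider $\psi(x)=\int_{\AA}\int_{\AA}\rho(a,a')\,\gamma(da|x)\gamma(da'|x)$; this is $\bfrak{X}$-measurable (it is $\rho$ integrated against the measurable kernel $x\mapsto\gamma(\cdot|x)\otimes\gamma(\cdot|x)$), and $\psi(x)=0$ if and only if $\gamma(\cdot|x)$ is a Dirac, whence $\mathbf{\Lambda}=\{\psi>0\}\in\bfrak{X}\subset\bfrak{X}_{\gamma}$.

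For the splitting, I would apply Theorem IV.14 in \cite{castaing77} to the convex-valued measurable multifunction $x\mapsto\mathbf{\Gamma}(x)$ (its measurability following from the measurability conditions on $\KK$) and to the selection $\gamma$. This produces two kernels $\widebar{\gamma},\wh{\gamma}\in\mathbb{S}$ with $\gamma=\tfrac12(\widebar{\gamma}+\wh{\gamma})$ whose disagreement set coincides, up to a $\mu_{\gamma}^{\XX}$-null set, with $\mathbf{\Lambda}$: on $\XX\setminus\mathbf{\Lambda}$ the value $\gamma(\cdot|x)$ is extreme and cannot be split, so one is forced to take $\widebar{\gamma}=\wh{\gamma}=\gamma$ there, while on $\mathbf{\Lambda}$ the non-extremality is realised measurably. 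Setting $\gamma_{1}=\widebar{\gamma}$ and $\gamma_{2}=\wh{\gamma}$ gives $\gamma\pm\tfrac12[\gamma_{1}-\gamma_{2}]\in\{\widebar{\gamma},\wh{\gamma}\}\subset\mathbb{S}$, which is exactly \eqref{gamma-property-2}. The selection of Castaing--Valadier is a priori only $\bfrak{X}_{\gamma}$-measurable, so I would replace $\gamma_{1},\gamma_{2}$ by $\bfrak{X}$-measurable versions agreeing with them $\mu_{\gamma}^{\XX}$-almost everywhere (and set equal to $\gamma$ on the exceptional null set, keeping them in $\mathbb{S}$). With $\gamma_{1},\gamma_{2}$ genuinely $\bfrak{X}$-measurable, the countable-separation argument already used in Lemma \ref{Castaing-Valadier-1} (Theorems 8.10.39 and 6.7.7 in \cite{bogachev07}) shows $\mathbf{\Xi}=\{x:\gamma_{1}(\cdot|x)\neq\gamma_{2}(\cdot|x)\}=\union_{n}\{x:\gamma_{1}h_{n}(x)\neq\gamma_{2}h_{n}(x)\}\in\bfrak{X}$; by construction $\mathbf{\Xi}\subset\mathbf{\Lambda}$ and the two differ only on the null set used in the correction, so $\mathbf{\Lambda}\setminus\mathbf{\Xi}$ is $\mu_{\gamma}^{\XX}$-null, proving \ref{Existence-gamma1-2}.

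For item \ref{Lambda-null-set}, if $\mathbf{\Lambda}$ is $\mu_{\gamma}^{\XX}$-null then $\gamma(\cdot|x)$ is a Dirac for $\mu_{\gamma}^{\XX}$-almost every $x$, and using a countable family $\{h_{n}\}$ of continuous functions separating the points of $\AA$ the map $x\mapsto(\gamma h_{n}(x))_{n}$ identifies the atom of $\gamma(\cdot|x)$ on $\XX\setminus\mathbf{\Lambda}$; this yields $\phi\in\mathbb{D}$ (taking $\phi=\theta$ on the null set $\mathbf{\Lambda}$) with $\gamma(\cdot|x)=\delta_{\phi(x)}$ for $\mu_{\gamma}^{\XX}$-almost every $x$. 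Reading \eqref{Def-occup-meas} for the stationary kernel $\gamma$ gives $\mu_{\gamma}=\mu_{\gamma}^{\XX}\otimes\gamma=\mu_{\gamma}^{\XX}\otimes\delta_{\phi(\cdot)}$, so it remains only to check $\mu_{\gamma}^{\XX}=\mu_{\phi}^{\XX}$. Writing $P_{\sigma}(\cdot|x)=Q_{\sigma}(\cdot\inter\Delta^{c}|x)$ and $\nu_{j}=\eta Q_{\gamma}^{(j)}\mathbb{I}_{\Delta^{c}}$, absorption gives the recursion $\nu_{0}=\eta\mathbb{I}_{\Delta^{c}}$, $\nu_{j}=\nu_{j-1}P_{\gamma}$ and $\mu_{\gamma}^{\XX}=\sum_{j\geq0}\nu_{j}$ (and likewise for $\phi$). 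Since $Q_{\gamma}(\cdot|x)=Q_{\phi}(\cdot|x)$, hence $P_{\gamma}(\cdot|x)=P_{\phi}(\cdot|x)$, on a $\mu_{\gamma}^{\XX}$-full set, and since each $\nu_{j-1}\ll\mu_{\gamma}^{\XX}$, an immediate induction gives $\eta Q_{\gamma}^{(j)}\mathbb{I}_{\Delta^{c}}=\eta Q_{\phi}^{(j)}\mathbb{I}_{\Delta^{c}}$ for all $j$, whence $\mu_{\gamma}^{\XX}=\mu_{\phi}^{\XX}$ and finally $\mu_{\gamma}=\mu_{\phi}\in\bcal{O}_{\mathbb{D}}$; alternatively one could invoke Lemma \ref{Equivalence-occup-measure-1} once the disintegration kernel of $\mu_{\gamma}$ is seen to be $\delta_{\phi(\cdot)}$.

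The main obstacle is entirely in item \ref{Existence-gamma1-2}: producing the two kernels $\gamma_{1},\gamma_{2}$ in a measurable way whose disagreement set is exactly $\mathbf{\Lambda}$ up to a null set. This is the precise content of the Castaing--Valadier theorem, but invoking it forces working in the completion $\bfrak{X}_{\gamma}$, and the delicate bookkeeping is to transfer back to a genuinely $\bfrak{X}$-measurable $\mathbf{\Xi}\subset\mathbf{\Lambda}$ with $\mu_{\gamma}^{\XX}$-null difference. By contrast item \ref{Lambda-null-set} is a routine disintegration-and-induction argument once $\phi$ has been selected.
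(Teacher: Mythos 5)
Your proof is correct, and its core is the same as the paper's: both obtain the splitting from Theorem IV.14 in \cite{castaing77} applied to the convex-valued multifunction $x\mapsto\mathbf{\Gamma}(x)$ (graph-measurable thanks to the measurability of $\KK$), and both then repair the a priori only $\bfrak{X}_{\gamma}$-measurable selectors into kernels in $\mathbb{S}$ agreeing with them outside a $\bfrak{X}$-measurable $\mu_{\gamma}^{\XX}$-null set $\mathcal{N}$ (the paper does this via Lemma 1.2 in \cite{crauel02}, which is exactly what your ``measurable versions'' step amounts to), so that $\mathbf{\Xi}=\mathbf{\Lambda}\inter\mathcal{N}^{c}$ and $\mathbf{\Lambda}\setminus\mathbf{\Xi}\subset\mathcal{N}$. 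You deviate in two auxiliary steps, both in the direction of self-containedness. First, for the measurability of $\mathbf{\Lambda}$ you do not extract it from the proof of Theorem IV.14 as the paper does: using that $\ext(\mathbf{\Gamma}(x))$ consists exactly of the Dirac measures $\delta_{a}$ with $a\in\AA(x)$ (the face argument the paper deploys only in item (b)), you write $\mathbf{\Lambda}=\{\psi>0\}$ with $\psi(x)=\int_{\AA}\int_{\AA}\varrho(a,a')\,\gamma(da|x)\gamma(da'|x)$ for a metric $\varrho$ on $\AA$; this yields genuine $\bfrak{X}$-measurability, strictly stronger than the $\bfrak{X}_{\gamma}$-measurability asserted in the statement, and as a by-product it makes the $\bfrak{X}$-measurability of $\mathbf{\Xi}=\mathbf{\Lambda}\inter\mathcal{N}^{c}$ immediate, so the countable-separation argument you borrow from Lemma \ref{Castaing-Valadier-1} is not even needed. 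Second, in item (b) the paper concludes $\mu_{\gamma}=\mu_{\phi}$ by referring to the proof of Proposition 3.3(ii) in \cite{fra-tom2024}, whereas you prove it directly: from $\mu_{\gamma}=\mu_{\gamma}^{\XX}\otimes\gamma=\mu_{\gamma}^{\XX}\otimes\delta_{\phi(\cdot)}$, the absorption-based recursion $\nu_{j}=\nu_{j-1}Q_{\gamma}\mathbb{I}_{\Delta^{c}}$ for $\nu_{j}=\eta Q_{\gamma}^{j}\mathbb{I}_{\Delta^{c}}$, the identity $Q_{\gamma}(\cdot|x)=Q_{\phi}(\cdot|x)$ on a $\mu_{\gamma}^{\XX}$-full set, and $\nu_{j-1}\leq\mu_{\gamma}^{\XX}$, an induction gives $\eta Q_{\gamma}^{j}\mathbb{I}_{\Delta^{c}}=\eta Q_{\phi}^{j}\mathbb{I}_{\Delta^{c}}$ for all $j$, hence $\mu_{\gamma}^{\XX}=\mu_{\phi}^{\XX}$ and $\mu_{\gamma}=\mu_{\phi}$; this is complete and replaces the external reference. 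The one point you should still write out is the $\bfrak{X}$-measurable selection of $\phi$ on $\mathbf{\Lambda}^{c}$: ``identifying the atom'' works, for instance, because $a\mapsto\delta_{a}$ is a homeomorphism of the compact space $\AA$ onto a closed subset of $\bcal{P}(\AA)$, so composing its inverse with the measurable map $x\mapsto\gamma(\cdot|x)$ is measurable; this is routine (the paper is equally terse at the corresponding point) and does not affect correctness.
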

\begin{proof}
Let us show the first item.
We will suppose that the spaces $\MM(\AA)$ and $\bcal{P}(\AA)$ are endowed with the weak topology.
Since $\KK \in \bfrak{X}\otimes\bfrak{B}(\AA)$, it follows from Theorem IV.12 in \cite{castaing77} that the graph of the multifunction $\mathbf{\Gamma}: \XX\tto\bcal{P}(\AA)$ defined by
$$\mathbf{\Gamma}(x)=\{\lambda\in\bcal{P}(\AA) : \lambda(\AA(x))=1 \}$$ belongs to $\bfrak{X}\otimes\bfrak{B}(\bcal{P}(\AA))$.
Let us denote by $\mathcal{S}_{\mathbf{\Gamma}}$ the set of $\bfrak{X}$-measurable mappings $\sigma$ from $\XX$ to $\bcal{P}(\AA)$ such that 
$\sigma(\cdot | x) \in \mathbf{\Gamma}(x)$ for any $x\in\XX$.
Recalling Assumption \ref{selector-theta}, $\mathbf{\Gamma}$ has non empty convex values in the Haussdorff locally convex space $\MM(\AA)$.
Observe that the stochastic kernel $\gamma\in\mathbb{S}$ gives rise to a mapping in $\mathcal{S}_{\mathbf{\Gamma}}$ defined by
$x\rightarrow \gamma(\cdot |x)$ and conversely, $\sigma\in\mathcal{S}_{\mathbf{\Gamma}}$ yields to a stochastic kernel in $\mathbb{S}$ given by
$(x,\Lambda)\rightarrow\sigma(x,\Lambda)$ for $x\in\XX$ and $\Lambda\in\bfrak{B}(\AA)$.
This follows easily from Theorem 3.1 in \cite{florescu12} (in this book, it is assumed that $(\XX,\bfrak{X})$ is a complete measurable space but actually the proof of Theorem 3.1 does not use this point).
Depending on the context, we will use $\mathbb{S}$ to refer to control policies and we will use $\mathcal{S}_{\mathbf{\Gamma}}$ when using results from \cite{castaing77}.
According to Theorem IV.14 in  \cite{castaing77} and its proof, the set $\mathbf{\Lambda}=\big\{x\in\XX : \gamma(\cdot |x)\in \mathbf{\Gamma}(x)\setminus\ext(\mathbf{\Gamma}(x)) \big\}$
is a $\bfrak{X}_{\gamma}$-measurable subset of $\XX$ and  there also exist $\varrho_{1}$ and $\varrho_{2}$, $\bfrak{X}_{\gamma}$-measurable selectors of the multifunction $\mathbf{\Gamma}$ satisfying
$\varrho_{1}(\cdot | x)\neq \varrho_{2}(\cdot | x)$ for $x\in\mathbf{\Lambda}$, and $\varrho_{1}(\cdot | x) = \varrho_{2}(\cdot | x)= \gamma(\cdot | x)$ for $x\in\mathbf{\Lambda}^c$, 
and such that $\gamma+ \frac{1}{2} [\varrho_{1}-\varrho_{2}] $ and $\gamma- \frac{1}{2} [\varrho_{1}-\varrho_{2}] $ are $\bfrak{X}_{\gamma}$-measurable selectors of the multifunction $\mathbf{\Gamma}$.
By using Lemma 1.2 in \cite{crauel02} there exist  a $\bfrak{X}$-measurable subset $\Xi$ of $\mathbf{\Lambda}$, two $\bfrak{X}$-measurable mappings $\rho_{1},\rho_{2}$ from $\XX$ to $\bcal{P}(\AA)$  and $\mathcal{N}\in\bfrak{X}$ such that
$ (\mathbf{\Lambda}\setminus\Xi)\union\{x\in\XX : \rho_{1} (\cdot |x)\neq\varrho_{1}(\cdot |x)\}\union\{x\in\XX : \rho_{2} (\cdot |x)\neq\varrho_{2}(\cdot |x)\}\subset \mathcal{N}$
with $\mu^{\XX}_{\gamma}(\mathcal{N})=0$.
Define $\mathbf{\Xi}=\Xi\inter\mathcal{N}^c$. Then $\mathbf{\Xi}\in\bfrak{X}$ and $\mathbf{\Xi}\subset\Xi\subset\mathbf{\Lambda}\subset [\Xi\union\mathcal{N}]=\mathbf{\Xi}\union\mathcal{N}$.
Therefore, $\mathbf{\Lambda}\setminus\mathbf{\Xi}$ is a $\mu_{\gamma}^{\XX}$-null set. Let us define, the stochastic kernels $\gamma_{i}$ on $\AA$ given $\XX$ for $i=1,2$ by $\gamma_{i}(\cdot | x)=\rho_{i}(\cdot | x)$ for $x\in\mathcal{N}^{c}$  and  $\gamma_{i}(\cdot | x)= \gamma(\cdot | x)$ for $x\in\mathcal{N}$. 
Then, $\gamma_{1}$, $\gamma_{2}$ are in $\mathbb{S}$ and satisfy 
\begin{align*}
\gamma_{1}(\cdot | x)\neq \gamma_{2}(\cdot | x) \text{ for } x\in\mathbf{\Xi}, \quad
\gamma_{1}(\cdot | x) = \gamma_{2}(\cdot | x) = \gamma(\cdot | x) \text{ for } x\in\mathbf{\Xi}^c 
\end{align*}
showing that $\mathbf{\Xi}=\{x\in\XX : \gamma_{1}(\cdot |x)\neq \gamma_{2}(\cdot |x)\}\subset\mathbf{\Lambda}$.
Moreover, we have also
\begin{align*}
\gamma+ \frac{1}{2} [\gamma_{1}-\gamma_{2}] \in \mathcal{S}_{\mathbf{\Gamma}} \text{ and } \gamma- \frac{1}{2} [\gamma_{1}-\gamma_{2}] \in \mathcal{S}_{\mathbf{\Gamma}}.
\end{align*}

For the second item, observe that if $\mathbf{\Lambda}$ is $\mu_{\gamma}^{\XX}$-null set then $\mu_{\gamma}^{\XX}(\mathbf{\Xi})=0$.
In this case, we have 
$\gamma(\cdot | x) = \delta_{\{\theta(x)\}}(\cdot) \mathbf{I}_{\mathbf{\Xi}\union\mathcal{N}}(x)+\gamma(\cdot | x) \mathbf{I}_{(\mathbf{\Xi}\union\mathcal{N})^{c}}(x) $
for $\mu_{\gamma}^{\XX}$-almost every $x$ since $\mu_{\gamma}^{\XX}(\mathbf{\Xi}\union\mathcal{N})=0$ where $\theta$ is the $\bfrak{X}$-measurable function from $\XX$ to $\AA$ introduced  in Assumption \ref{selector-theta}.
The extreme points of $\bcal{P}(\AA)$ are given by the Dirac measures from Theorem 15.9 in \cite{aliprantis06}.
Clearly, the mapping $x\mapsto\delta_{\{\theta(x)\}}(\cdot) \mathbf{I}_{\mathbf{\Xi}\union\mathcal{N}}(x)+\gamma(\cdot | x) \mathbf{I}_{(\mathbf{\Xi}\union\mathcal{N})^{c}}(x)$ from $\XX$ to $\bcal{P}(\AA)$
is $\bfrak{X}$-measurable and $\delta_{\{\theta(x)\}}(\cdot) \mathbf{I}_{\mathbf{\Xi}\union\mathcal{N}}(x)+\gamma(\cdot | x) \mathbf{I}_{(\mathbf{\Xi}\union\mathcal{N})^{c}}(x)\in\ext(\mathbf{\Gamma}(x))$
by definition of $\mathbf{\Lambda}$ and recalling $\mathbf{\Lambda}\subset\mathbf{\Xi}\union\mathcal{N}$.
The extreme points of $\bcal{P}(\AA)$ are given by the Dirac measures from Theorem 15.9 in \cite{aliprantis06}.
Observe that $\mathbf{\Gamma}(x)$ is a face of $\bcal{P}(\AA)$ for any $x\in\XX$ and so, $\ext(\mathbf{\Gamma}(x))=\ext(\bcal{P}(\AA))\inter\mathbf{\Gamma}(x)$.
Therefore, there exists a $\bfrak{X}$-measurable mapping $\phi$ from $\XX$ to $\AA$ satisfying $\phi(x)\in\AA(x)$ for any $x\in\XX$ and 
$\mu_{\gamma}(dx,da)=\delta_{\phi(x)}(da)\mu^{\XX}_{\gamma}(dx)$.
Proceeding as for the proof of Proposition 3.3(ii) in \cite{fra-tom2024}, we get easily that $\mu_{\gamma}=\mu_{\phi}$ showing the result.
\end{proof}

\subsection{The case of a general absorbing model}
\label{Case-absorbing}
We start with the following proposition which states that the set of occupation measures of an $\Delta$-absorbing model is linearly closed and linearly bounded, that is, its intersection with each line is closed and bounded
in the natural topology of the line. These are particularly important properties in the study of extremal points of convex sets, see for example \cite{dubins62}.
\begin{proposition}
\label{Linearly-bounded-closed}
Assume that the Markov controlled model $\mathsf{M}$ is $\Delta$-absorbing. The set of occupation of measures $\bcal{O}$ is linearly closed and linearly bounded.
\end{proposition}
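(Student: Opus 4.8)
The plan is to test $\bcal{O}$ against an arbitrary line, which I write as $L=\{\mu_*+s\nu : s\in\RR\}$ with direction $\nu\neq 0$, and to identify $L\inter\bcal{O}$ with the set $I=\{s\in\RR : \mu_*+s\nu\in\bcal{O}\}\subseteq\RR$. Since $\bcal{O}$ is convex, $I$ is an interval, so the two assertions amount to saying that $I$ is bounded and closed. Throughout I will use freely that $\bcal{O}\subseteq\bcal{C}$, that occupation measures are nonnegative, and the characterization of $\bcal{O}$ given in Lemma \ref{Equivalence-occup-measure-1}.

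For \emph{linear boundedness} I argue by contradiction. If $I$ is unbounded it contains a ray, and after reparametrising I may assume $\mu_*\in\bcal{O}$ and $\mu_*+s\nu\in\bcal{O}$ for every $s\geq 0$. Three observations follow. First, each $\mu_*+s\nu$ is a nonnegative measure, so evaluating on a fixed set and letting $s\to\infty$ forces $\nu\geq 0$, with $\nu\neq 0$. Second, since $\bcal{O}\subseteq\bcal{C}$, each $\mu_*+s\nu$ satisfies the characteristic equation and the relation $(\mu_*+s\nu)(\KK^c)=0$; subtracting the instances at $s$ and at $s=0$ gives $\nu^{\XX}=(\nu Q)\mathbb{I}_{\Delta^c}$ and $\nu(\KK^c)=0$, so $\nu$ is a nonnegative, $\KK$-concentrated solution of the \emph{homogeneous} characteristic equation. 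Third, $\mu_*+\nu\in\bcal{O}$ yields $(\mu_*+\nu)^{\XX}\ll\mu_\pi^{\XX}$ for some $\pi\in\mathbf{\Pi}$, and because $\nu^{\XX}\leq(\mu_*+\nu)^{\XX}$ we obtain $\nu^{\XX}\ll\mu_\pi^{\XX}$. Thus $\nu$ is exactly a measure of the type $\vartheta$ arising in the proof of Lemma \ref{Equivalence-occup-measure-1}, so Proposition 3.8(i) in \cite{fra-tom2024} forces $\nu=0$, a contradiction. Hence $I$ is bounded.

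For \emph{linear closedness} I show that the finite endpoints of $I$ lie in $I$; if $I$ is empty or a singleton this is trivial. Let $b=\sup I$ and suppose $b\notin I$. Then $I$ accumulates at $b$ from below, so I may pick $s_0<s_1<b$ with $s_0,s_1\in I$; set $\mu_{s_0}=\mu_*+s_0\nu$, $\mu_{s_1}=\mu_*+s_1\nu$ (both in $\bcal{O}$) and $\mu_b=\mu_*+b\nu$. With $c=\tfrac{b-s_0}{s_1-s_0}>1$ one has the affine identity $\mu_b=(1-c)\mu_{s_0}+c\,\mu_{s_1}$. Since the characteristic equation and the constraint $\mu(\KK^c)=0$ are affine in $\mu$, and since $\mu_b\geq 0$ as a setwise limit of the nonnegative measures $\mu_*+s\nu$ with $s\uparrow b$, it follows that $\mu_b\in\bcal{C}$. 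For the domination required by Lemma \ref{Equivalence-occup-measure-1}, I use the expression $\mu_b^{\XX}=c\,\mu_{s_1}^{\XX}-(c-1)\mu_{s_0}^{\XX}$: as $c-1>0$ and $\mu_{s_0}^{\XX}\geq 0$, every measurable set $\Gamma$ satisfies $0\leq\mu_b^{\XX}(\Gamma)\leq c\,\mu_{s_1}^{\XX}(\Gamma)$, whence $\mu_b^{\XX}\ll\mu_{s_1}^{\XX}=\mu_\pi^{\XX}$ for any $\pi$ with $\mu_{s_1}=\mu_\pi$. Lemma \ref{Equivalence-occup-measure-1} then gives $\mu_b\in\bcal{O}$, i.e. $b\in I$; applying the same argument to $\inf I$ (equivalently, replacing $\nu$ by $-\nu$) handles the lower endpoint, so $I$ is closed.

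The only genuinely delicate point is supplying, for the limit point $\mu_b$, a \emph{single} policy whose state-marginal dominates $\mu_b^{\XX}$, as Lemma \ref{Equivalence-occup-measure-1} demands; the trick that unlocks it is that $b$ lies beyond both chosen parameters, so $\mu_b$ is an affine combination with one negative coefficient, and subtracting the nonnegative measure $(c-1)\mu_{s_0}^{\XX}$ makes $\mu_b^{\XX}$ automatically dominated by a scalar multiple of the occupation-measure marginal $\mu_{s_1}^{\XX}$. The remaining verifications — that $\bcal{O}\subseteq\bcal{C}$, that the characteristic relations are affine so that affine combinations of solutions are again solutions, and that $b=\sup I\notin I$ lets one select $s_0<s_1<b$ inside $I$ — are routine and I would only check them briefly.
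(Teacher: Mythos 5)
Your proposal is correct, and it splits into two halves of different character relative to the paper. For \emph{linear closedness} you do essentially what the paper does: write the candidate limit point as an affine combination of two occupation measures on the line, observe that membership in $\bcal{C}$ and nonnegativity pass to the limit, and then invoke Lemma \ref{Equivalence-occup-measure-1}; the only cosmetic difference is that you arrange $s_0<s_1<b$ so that $\mu_b^{\XX}\leq c\,\mu_{s_1}^{\XX}$ and dominate by a scalar multiple of a single occupation-measure marginal, whereas the paper dominates by the average $(\lambda_{u_{n_0}}+\lambda_{u_{n_1}})/2$, which lies in $\bcal{O}$ by convexity — both give the absolute continuity that Lemma \ref{Equivalence-occup-measure-1} requires. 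For \emph{linear boundedness}, however, your route is genuinely different. The paper's argument is direct and quantitative: it evaluates the measures on a separating set $\Gamma$ and bounds the parameter explicitly by $\big[\sup_{\mu\in\bcal{O}}\mu(\XX\times\AA)+|\nu(\Gamma)|\big]/|\gamma(\Gamma)-\nu(\Gamma)|$, the key external input being Lemma 2.4 in \cite{fra-tom2024}, i.e.\ the (nontrivial) uniform bound on total masses of occupation measures for a $\Delta$-absorbing model. You instead argue by contradiction: an unbounded section produces a recession direction $\nu\geq 0$, $\nu\neq 0$, which is a $\KK$-concentrated nonnegative solution of the homogeneous characteristic equation with $\nu^{\XX}$ dominated by an occupation-measure marginal, and Proposition 3.8(i) in \cite{fra-tom2024} kills it. This avoids the uniform mass bound entirely and stays within exactly the toolkit the paper already deploys for Lemma \ref{Equivalence-occup-measure-1} (your invocation of Proposition 3.8(i) is applied to the same data — nonnegativity, the homogeneous equation $\nu^{\XX}=\nu Q\,\mathbb{I}_{\Delta^c}$, $\nu(\KK^c)=0$, and domination — as in the paper's own proof of that lemma, so the usage is legitimate); the trade-off is that your argument is non-quantitative, while the paper's yields an explicit bound on the line parameter.
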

\begin{proof}
Let us first show that for $\nu\neq\gamma$ in $\bcal{M}(\XX\times\AA)$, the set $\mathcal{D}=\{u\in\RR : (1-u)\nu+u\gamma \in \bcal{O}\}$ is a bounded interval of $\RR$.
Recalling that $\bcal{O}$ is a convex set, it follows easily that for $s$, $t$ in $\mathcal{D}$, $[s,t]\subset \mathcal{D}$ and so, $\mathcal{D}$ is an interval of $\RR$.
Let $\Gamma$ be a measurable subset of $\XX\times\AA$ such that $\nu(\Gamma)\neq\gamma(\Gamma)$. Then, 
$$\sup_{u\in \mathcal{D}}|u|\leq \Big[\sup_{\mu\in\bcal{O}} \mu(\XX\times\AA) + |\nu(\Gamma)|\Big]/ |\gamma(\Gamma)-\nu(\Gamma)|.$$
By using Lemma 2.4 in \cite{fra-tom2024}, this implies that $\mathcal{D}$ is bounded.  
Finally, let us show that $\mathcal{D}$ is closed. Let us consider $\{u_{n}\}_{n\in\NN}$ a sequence in $\mathcal{D}$ such that $\lim_{n\rightarrow\infty} u_{n}=u\in\RR$.
Then it follows that the measure $\lambda_{u}$ defined by $\lambda_{u}=(1-u)\nu+u\gamma$ lies in $\MM^{+}(\XX\times\AA)$
and also satisfies 
\begin{align*}
\lambda_{u}^{\XX}=\big[\eta+\lambda_{u} Q\big] \mathbb{I}_{\Delta^{c}} \text{ with } \lambda_{u}(\KK^{c})=0
\end{align*}
showing that $\lambda_{u}\in\bcal{C}$. Write $\lambda_{u_{n}}=(1-u_{n})\nu+u_{n}\gamma$ for $n\in\NN$. There is no loss of generality to assume that $\lambda_{u_{n_{0}}}\neq \lambda_{u_{n_{1}}}$
for some $n_{0}$, $n_{1}$ in $\NN$.
Consequently, there exists $\alpha$ and $\beta$ in $\RR$ such that 
$\nu=(1-\alpha) \lambda_{u_{n_{0}}} + \alpha \lambda_{u_{n_{1}}} $ and $\gamma=(1-\beta) \lambda_{u_{n_{0}}} + \beta \lambda_{u_{n_{1}}} $. This implies that
$\lambda_{u} \ll (\lambda_{u_{n_{0}}}+\lambda_{u_{n_{1}}})/2 =\lambda_{\pi}$
for some $\pi\in\mathbf{\Pi}$ since $\bcal{O}$ is convex. Therefore,
$\lambda_{u} \in\bcal{O}$ from Lemma \ref{Equivalence-occup-measure-1} showing that $\mathcal{D}$ is closed.
\end{proof}

We will need the following result established in \cite[Lemma 4.6]{feinberg12} for absorbing models and in \cite[Theorem 1]{piunovskiy24} for models having an absorbing
state in the framework of a Borel state space. It is possible to prove this property in the context of a general measurable state space by using Proposition \ref{Castaing-Valadier-2}.
\begin{proposition}
\label{Extreme-point-Occupation-measure}
Assume that the model $\mathsf{M}$ is $\Delta$-absorbing.
Then, $\ext\big(\bcal{O}\big)=\bcal{O}_{\mathbb{D}}$.
\end{proposition}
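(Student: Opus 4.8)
The plan is to prove the two inclusions $\bcal{O}_{\mathbb{D}}\subseteq\ext(\bcal{O})$ and $\ext(\bcal{O})\subseteq\bcal{O}_{\mathbb{D}}$ separately, the second being the substantial one.

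For $\bcal{O}_{\mathbb{D}}\subseteq\ext(\bcal{O})$, I would fix $\phi\in\mathbb{D}$ and suppose $\mu_{\phi}=\tfrac12(\mu_{1}+\mu_{2})$ with $\mu_{1},\mu_{2}\in\bcal{O}$. Since $\mu_{\phi}$ is carried by the graph of $\phi$ and $\mu_{1},\mu_{2}\geq 0$, both $\mu_{i}$ are carried by this graph, hence disintegrate as $\mu_{i}(dx,da)=\delta_{\phi(x)}(da)\,\mu_{i}^{\XX}(dx)$. As $\mu_{i}\in\bcal{O}\subseteq\bcal{C}$, each $\mu_{i}$ is a solution of the characteristic equation with Dirac disintegration at $\phi$; such a solution must coincide with $\mu_{\phi}$ (exactly as in the final identification in the proof of Proposition \ref{Castaing-Valadier-2}\ref{Lambda-null-set}, the $\Delta$-absorbing property guaranteeing uniqueness of the finite state marginal). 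Thus $\mu_{1}=\mu_{2}=\mu_{\phi}$ and $\mu_{\phi}$ is extreme.

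For the converse, let $\mu\in\ext(\bcal{O})$; by $\bcal{O}=\bcal{O}_{\mathbb{S}}$ and Lemma \ref{lemma-disintegration} write $\mu=\mu_{\gamma}$ with $\gamma\in\mathbb{S}$ and $\mu_{\gamma}=\mu_{\gamma}^{\XX}\otimes\gamma$. If the non-Dirac set $\mathbf{\Lambda}$ of Proposition \ref{Castaing-Valadier-2} is $\mu_{\gamma}^{\XX}$-null, then Proposition \ref{Castaing-Valadier-2}\ref{Lambda-null-set} immediately gives $\mu_{\gamma}\in\bcal{O}_{\mathbb{D}}$. So the goal is to show that $\mu_{\gamma}^{\XX}(\mathbf{\Lambda})>0$ contradicts extremality. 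Assuming this, Proposition \ref{Castaing-Valadier-2}\ref{Existence-gamma1-2} produces $\widebar{\gamma}=\gamma+\tfrac12(\gamma_{1}-\gamma_{2})$ and $\widehat{\gamma}=\gamma-\tfrac12(\gamma_{1}-\gamma_{2})$ in $\mathbb{S}$ whose differing set $\mathbf{\Xi}\subseteq\mathbf{\Lambda}$ is $\bfrak{X}$-measurable with $\mu_{\gamma}^{\XX}(\mathbf{\Xi})>0$, and with $\gamma=\tfrac12(\widebar{\gamma}+\widehat{\gamma})$.

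The hard part is that the stationary occupation map $\sigma\mapsto\mu_{\sigma}$ is nonlinear (it involves a geometric series in $Q_{\sigma}\mathbb{I}_{\Delta^{c}}$), so replacing $\gamma$ by $\widebar{\gamma},\widehat{\gamma}$ stationarily does \emph{not} realize $\mu_{\gamma}$ as their average. I would circumvent this by perturbing at a single stage of a Markov policy. Since $\mu_{\gamma}^{\XX}=\sum_{j}\eta Q_{\gamma}^{j}\mathbb{I}_{\Delta^{c}}$ by \eqref{Def-occup-meas}, positivity $\mu_{\gamma}^{\XX}(\mathbf{\Xi})>0$ yields some $N$ with $\eta Q_{\gamma}^{N}\mathbb{I}_{\Delta^{c}}(\mathbf{\Xi})>0$. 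Define Markov policies $\sigma^{1},\sigma^{2}$ that coincide with $\gamma$ at every stage $k\neq N$ and use $\widebar{\gamma}$, respectively $\widehat{\gamma}$, at stage $N$. Then Lemma \ref{Castaing-Valadier-1} gives $\mu_{\sigma^{1}}\neq\mu_{\sigma^{2}}$, while a stage-by-stage expansion of \eqref{Def-occup-meas} shows $\tfrac12(\mu_{\sigma^{1}}+\mu_{\sigma^{2}})=\mu_{\gamma}$: the terms of index $j<N$ are identical; the index-$N$ terms average to $\eta Q_{\gamma}^{N}\mathbb{I}_{\Delta^{c}}\otimes\tfrac12(\widebar{\gamma}+\widehat{\gamma})=\eta Q_{\gamma}^{N}\mathbb{I}_{\Delta^{c}}\otimes\gamma$; and for $j>N$ the downstream propagation is common (both revert to $\gamma$ after stage $N$), so the index-$j$ terms average to $\eta Q_{\gamma}^{N}\cdot\tfrac12(Q_{\widebar{\gamma}}+Q_{\widehat{\gamma}})Q_{\gamma}^{\,j-N-1}\mathbb{I}_{\Delta^{c}}\otimes\gamma=\eta Q_{\gamma}^{j}\mathbb{I}_{\Delta^{c}}\otimes\gamma$, using the linearity of $\sigma\mapsto Q_{\sigma}$. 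Hence $\mu_{\gamma}$ is a proper convex combination of two distinct elements of $\bcal{O}$, contradicting extremality; therefore $\mathbf{\Lambda}$ is $\mu_{\gamma}^{\XX}$-null and $\mu_{\gamma}\in\bcal{O}_{\mathbb{D}}$. The essential point, and the reason a single-stage Markov perturbation succeeds where a stationary one fails, is that after the perturbed stage both policies revert to $\gamma$, so the average of the one-step kernels is exactly $Q_{\gamma}$ and the offending nonlinear resolvent never appears.
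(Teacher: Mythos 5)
Your proposal is correct and, for the substantive inclusion $\ext(\bcal{O})\subseteq\bcal{O}_{\mathbb{D}}$, matches the paper's own proof essentially verbatim: the paper likewise writes $\mu=\mu_{\gamma}$ with $\gamma\in\mathbb{S}$, invokes Proposition \ref{Castaing-Valadier-2}\ref{Existence-gamma1-2}, perturbs $\gamma$ at a single stage $N$ (chosen so that $\eta Q_{\gamma}^{N}(\mathbf{\Xi})>0$) by $\pm\frac{1}{2}[\gamma_{1}-\gamma_{2}]$ to build Markov policies $\sigma^{1},\sigma^{2}$ with $\mu_{\gamma}=\frac{1}{2}(\mu_{\sigma^{1}}+\mu_{\sigma^{2}})$, obtains $\mu_{\sigma^{1}}\neq\mu_{\sigma^{2}}$ from Lemma \ref{Castaing-Valadier-1}, and concludes via Proposition \ref{Castaing-Valadier-2}\ref{Lambda-null-set}. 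For the converse inclusion (which the paper dismisses as straightforward) your argument works, but with one caveat: uniqueness of solutions of the characteristic equation with Dirac disintegration at $\phi$ does \emph{not} follow from $\Delta$-absorption alone (solutions can differ by a $\vartheta$ with $\vartheta^{\XX}=\vartheta Q\,\mathbb{I}_{\Delta^{c}}$, cf.\ Lemma \ref{Equivalence-occup-measure-1}); what actually closes the step is that each $\mu_{i}$ lies in $\bcal{O}=\bcal{O}_{\mathbb{S}}$, so the disintegration lemma and the identification you cite from Proposition \ref{Castaing-Valadier-2}\ref{Lambda-null-set} yield $\mu_{i}=\mu_{\phi}$.
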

\begin{proof}
To get the result, we will first show that $\ext\big(\bcal{O}_{\mathbb{S}}\big)\subset\bcal{O}_{\mathbb{D}}$.
Suppose that $\mu_{\gamma}\in \ext\big(\bcal{O}_{\mathbb{S}}\big)$ for $\gamma\in\mathbb{S}$.
From Proposition \ref{Castaing-Valadier-2}\ref{Existence-gamma1-2}, there exist  $\gamma_{1}$ and $\gamma_{2}$ in $\mathbb{S}$ such that
$\gamma+ \frac{1}{2} [\gamma_{1}-\gamma_{2}]\in\mathbb{S}$ and $\gamma- \frac{1}{2} [\gamma_{1}-\gamma_{2}]\in\mathbb{S}$  
and $\mathbf{\Xi}=\{x\in\mathbf{\Lambda} : \gamma_{1}(\cdot |x)\neq \gamma_{2}(\cdot |x)\}$ is a $\bfrak{X}$-measurable set.
Suppose that $\mu_{\gamma}^{\XX}(\mathbf{\Xi})>0$, then there exists $N$ such that $\eta Q_{\gamma}^{N}(\mathbf{\Xi})>0$.
Therefore, we can define the Markov policies $\sigma^{1}=\{\sigma^{1}_n\}_{n\ge0}$ and $\sigma^{2}=\{\sigma^{2}_n\}_{n\ge0}$ by
$\sigma^{1}_{k}(\cdot |x)=\sigma^{2}_{k}(\cdot |x)=\gamma(\cdot | x)$ for $k\neq N$ and $\sigma^{1}_{N}(\cdot |x)=\gamma(\cdot | x)+ \frac{1}{2} [\gamma_{1}-\gamma_{2}] $ and
$\sigma^{2}_{N}(\cdot |x)=\gamma(\cdot | x)-\frac{1}{2} [\gamma_{1}-\gamma_{2}]$
for $x\in\XX$. By using the expression \eqref{Def-occup-meas} of an occupation measure generated by a Markov policy and the definition of $\sigma^i$ for $i=1,2$, it follows that
$\frac{1}{2}[\mu_{\sigma^1}+\mu_{\sigma^2}] $.
Now observe that $\{x\in\XX : \sigma^{1}_{N}(\cdot |x)\neq \sigma^{2}_{N}(\cdot |x)\}=\{x\in\XX : \gamma_{1}(\cdot |x)\neq \gamma_{2}(\cdot |x)\}=\mathbf{\Xi}$.
From Lemma \ref{Castaing-Valadier-1}, we get that $\mu_{\sigma^{1}}\neq\mu_{\sigma^{2}}$ yielding a contradiction with $\mu_{\gamma}\in \ext\big(\bcal{O}_{\mathbb{S}}\big)$.
Therefore, $\mu_{\gamma}^{\XX}(\mathbf{\Xi})=0$ and so, the set $\mathbf{\Lambda}$ introduced in Proposition \ref{Castaing-Valadier-2} is a $\mu_{\gamma}^{\XX}$-null set and the result follows by 
Proposition \ref{Castaing-Valadier-2}\ref{Lambda-null-set}. The converse inclusion is straightforward.
\end{proof}

The next Theorem is the main result of this subsection.
\begin{theorem}
\label{Dubins}
Assume that the Markov controlled model $\mathsf{M}$ is $\Delta$-absorbing.
Consider a bounded measurable function $g:\XX\times\AA\mapsto\RR^p$ satisfying $g(x,a)=0_{p}$ for $(x,a)\in\Delta\times\AA$ and a constant $\alpha\in\RR^p$ for $p\in\NN^*$.
Every extreme point of $\bcal{O}(g,\alpha)$ is a convex combination of at most $p+1$ extreme points of $\bcal{O}$ and so,
$$\ext\big(\bcal{O}(g,\alpha)\big)\subset \bcal{O}_{\mathbb{C}_{p+1}}(g,\alpha).$$
\end{theorem}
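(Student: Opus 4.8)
The plan is to read the statement as an instance of Dubins' theorem on extreme points of sections of convex sets (\cite{dubins62}), fed by the two structural facts already available: $\bcal{O}$ is linearly closed and linearly bounded (Proposition \ref{Linearly-bounded-closed}), and $\ext(\bcal{O})=\bcal{O}_{\mathbb{D}}$ (Proposition \ref{Extreme-point-Occupation-measure}). The only genuinely new work is to convert the resulting finite convex combination of deterministic occupation measures into the occupation measure of a single chattering stationary policy.

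First I would record that $\bcal{H}(g,\alpha)=\bigcap_{k=1}^{p}\{\mu\in\MM(\XX\times\AA):\mu(g_{k})=\alpha_{k}\}$ is the intersection of $p$ affine hyperplanes, each cut out by the linear functional $\mu\mapsto\mu(g_{k})$ (well defined since $g_{k}$ is bounded). Thus $\bcal{O}(g,\alpha)=\bcal{O}\inter\bcal{H}(g,\alpha)$ is the intersection of the linearly closed and linearly bounded convex set $\bcal{O}$ with a flat of codimension at most $p$. Dubins' theorem is purely linear in nature, so it applies verbatim in the vector space $\MM(\XX\times\AA)$ and yields that any $\mu\in\ext(\bcal{O}(g,\alpha))$ is a convex combination $\mu=\sum_{i=1}^{m}\beta_{i}\mu_{i}$ of $m\le p+1$ extreme points $\mu_{i}$ of $\bcal{O}$ with $(\beta_{1},\ldots,\beta_{m})$ in the simplex. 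By Proposition \ref{Extreme-point-Occupation-measure}, each $\mu_{i}=\mu_{\phi_{i}}$ for some $\phi_{i}\in\mathbb{D}$; padding with zero weights if $m<p+1$, we may write $\mu=\sum_{i=1}^{p+1}\beta_{i}\mu_{\phi_{i}}$.

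The main obstacle is the final inclusion, because $\varphi\mapsto\mu_{\varphi}$ is not linear, so this convex combination is not literally the occupation measure of a constant mixture of the $\phi_{i}$. I would resolve it by disintegration. Put $\lambda=\mu^{\XX}=\sum_{i}\beta_{i}\mu_{\phi_{i}}^{\XX}$ and let $f_{i}=d\mu_{\phi_{i}}^{\XX}/d\lambda$ be the Radon--Nikodym densities (each $\mu_{\phi_{i}}^{\XX}\ll\lambda$), so that $\sum_{i}\beta_{i}f_{i}=1$ holds $\lambda$-a.e. Since $\mu_{\phi_{i}}(dx,da)=\delta_{\phi_{i}(x)}(da)\,\mu_{\phi_{i}}^{\XX}(dx)$, summing gives
\[
\mu(dx,da)=\Big(\sum_{i=1}^{p+1}\beta_{i}f_{i}(x)\,\delta_{\phi_{i}(x)}(da)\Big)\lambda(dx),
\]
so the disintegrating kernel of $\mu$ is the finitely supported kernel $\sigma(da|x)=\sum_{i=1}^{p+1}\beta_{i}f_{i}(x)\,\delta_{\phi_{i}(x)}(da)$ of order $p+1$, whose weight vector $(\beta_{i}f_{i}(x))_{i}$ lies in the simplex for $\lambda$-a.e. $x$. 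Redefining $\sigma$ on the $\lambda$-null exceptional set by $\delta_{\theta(x)}$ (using the selector $\theta$ of Assumption \ref{selector-theta}) produces a genuine $\varphi\in\mathbb{C}_{p+1}$, and since $\varphi=\sigma$ holds $\lambda$-a.e. we still have $\mu=\lambda\otimes\varphi=\mu^{\XX}\otimes\varphi$.

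It remains to check $\mu=\mu_{\varphi}$. As $\mu\in\bcal{O}\subset\bcal{C}$, we have $\mu^{\XX}=(\eta+\mu Q)\mathbb{I}_{\Delta^{c}}$; combining this with $\mu Q=\mu^{\XX}Q_{\varphi}$ (an immediate consequence of $\mu=\mu^{\XX}\otimes\varphi$) shows that $\mu^{\XX}$ solves the characteristic equation associated with the stationary kernel $\varphi$, while $\mu^{\XX}\ll\mu^{\XX}_{\pi}$ for some $\pi$ because $\mu\in\bcal{O}$. This is exactly the configuration handled at the end of the proof of Proposition \ref{Castaing-Valadier-2}\ref{Lambda-null-set} via Proposition 3.3(ii) in \cite{fra-tom2024}, and it gives $\mu=\mu_{\varphi}$ with $\varphi\in\mathbb{C}_{p+1}$. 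Since $\mu\in\bcal{H}(g,\alpha)$ by hypothesis, we conclude $\mu\in\bcal{O}_{\mathbb{C}_{p+1}}(g,\alpha)$, which is the desired inclusion.
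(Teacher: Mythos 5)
Your proposal is correct and follows the same route as the paper: both arguments combine Proposition \ref{Linearly-bounded-closed} with the Main Theorem of \cite{dubins62} to write an extreme point of $\bcal{O}(g,\alpha)$ as a convex combination of at most $p+1$ extreme points of $\bcal{O}$, and then invoke Proposition \ref{Extreme-point-Occupation-measure} to identify those extreme points with occupation measures $\mu_{\phi_i}$ of deterministic stationary policies. The only divergence is the last step: the paper disposes of it by citing ``a slight modification of Lemma 5 in \cite{feinberg2020}'' adapted to a measurable state space, whereas you prove the conversion of $\sum_i\beta_i\mu_{\phi_i}$ into $\mu_\varphi$ with $\varphi\in\mathbb{C}_{p+1}$ directly, via the Radon--Nikodym densities $f_i=d\mu^{\XX}_{\phi_i}/d\lambda$, which exhibit the finitely supported disintegrating kernel $\sum_i\beta_i f_i(x)\delta_{\phi_i(x)}(da)$, followed by the characteristic-equation argument ($\mu\in\bcal{C}$, $\mu Q=\mu^{\XX}Q_\varphi$, $\mu^{\XX}\ll\mu^{\XX}_\pi$) in the style of Lemma \ref{Equivalence-occup-measure-1} and Proposition \ref{Castaing-Valadier-2}\ref{Lambda-null-set}. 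This buys self-containedness: your inline argument is essentially what the cited ``slight modification'' amounts to, carried out with machinery the paper already imports from \cite{fra-tom2024}, so it is arguably a more complete write-up of the same proof rather than a different one. One small point to tidy: for padded indices with $\beta_i=0$ the density $f_i$ need not exist (there $\mu^{\XX}_{\phi_i}\ll\lambda$ can fail), but such terms contribute nothing, so simply set $f_i\equiv 0$ for them, or pad the kernel to order $p+1$ only after the disintegration.
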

\begin{proof}
Combining Proposition \ref{Linearly-bounded-closed} and the Main Theorem in \cite{dubins62}, we easily get the first part of the claim.
Since by Proposition \ref{Extreme-point-Occupation-measure} $\ext\big(\bcal{O}\big)\subset\bcal{O}_{\mathbb{D}}$, it follows that if $\mu_{\pi} \in\ext\big(\bcal{O}(g,\alpha)\big)$ for some $\pi\in\mathbf{\Pi}$ then
$\mu_{\pi}(g)=\alpha$ and 
$\ds \mu_{\pi}=\sum_{i=1}^{p+1} \alpha_{i} \mu_{\phi_{i}}$ with $\phi_{i}\in\mathbb{D}$ for $i\in\NN_{p+1}^*$ and $(\alpha_{1},\ldots,\alpha_{p+1})\in\bscr{S}_{p+1}$.
By using a slight modification of Lemma 5 in \cite{feinberg2020} to a measurable state space, there exists $\gamma\in \mathbb{C}_{p+1}$ such that $ \mu_{\pi}=\mu_{\gamma}$ and so, 
$\mu_{\gamma}(g)=\alpha$ showing the result.
\end{proof}

\subsection{The case of an absorbing and atomless model}
\label{Case-absorbing+atomless}
To establish the main result of this subsection, that is, $\ext(\bcal{O}(g,\alpha))=\bcal{O}_{\mathbb{D}}(g,\alpha)$ we will need the following technical lemma.
\begin{lemma}
\label{Tech-Castaing-Valadier-3}
Assume that the Markov controlled model $\mathsf{M}$ is $\Delta$-absorbing.
Consider $N\in\NN$ and a bounded function $g\in \bcal{L}^{0}_{\RR^{p}}(\XX\times\AA)$ satisfying $g(x,a)=0_{p}$ for $(x,a)\in\Delta\times\AA$, $p\in\NN^*$.
For $\gamma$, $\pi^{1}$ and $\pi^{2}$ in $\mathbb{S}$, the $p$-dimensional signed measure $\nu$ on $\XX$ defined by 
$$\nu(dx)= \Big[ g_{\pi}(x)+Q_{\pi} \sum_{k\in\NN} Q_{\gamma}^{k}  g_{\gamma} (x)\Big]  \: \eta Q_{\gamma}^{N} \mathbb{I}_{\Delta^c} (dx)$$
is finite and absolutely continuous with respect to $\eta Q_{\gamma}^{N}\mathbb{I}_{\Delta^c}$ where $g_{\pi}=g_{\pi^{1}}-g_{\pi^{2}}$ and $Q_{\pi}=Q_{\pi^{1}}-Q_{\pi^{2}}$.
\end{lemma}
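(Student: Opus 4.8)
The plan is to read the defining formula as ``density times measure''. Writing $m:=\eta Q_\gamma^N\mathbb{I}_{\Delta^c}$ (a finite measure on $\XX$, carried by $\Delta^c$, of total mass at most $1$), $G:=\sum_{k\in\NN}Q_\gamma^k g_\gamma$ and $f:=g_\pi+Q_\pi G$, the object under study is simply $\nu(dx)=f(x)\,m(dx)$. Once $f$ is shown to be measurable, defined $m$-almost everywhere, and to satisfy $\int_\XX\|f\|\,dm<+\infty$, both conclusions are immediate: finiteness is precisely $\int_\XX\|f\|\,dm<+\infty$, and if $m(B)=0$ then $\nu(B)=\int_B f\,dm=0$, i.e.\ $\nu\ll m$. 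So everything reduces to the $m$-integrability of the two summands of $f$.

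The first summand is harmless. Since $g$ is bounded and vanishes on $\Delta\times\AA$, each $g_{\pi^j}$ is bounded by $\|g\|$, whence $\|g_\pi\|\le 2\|g\|$ and $\int_\XX\|g_\pi\|\,dm\le 2\|g\|\,m(\XX)<+\infty$. For the second summand I would first control $G$: because $g_\gamma$ vanishes on $\Delta$ and $\|g_\gamma\|\le\|g\|$, one has $\|Q_\gamma^k g_\gamma(y)\|\le\|g\|\,Q_\gamma^k(\Delta^c\mid y)$, hence $\sum_{k\in\NN}\|Q_\gamma^k g_\gamma(y)\|\le\|g\|\,W(y)$ with $W(y):=\sum_{k\in\NN}Q_\gamma^k(\Delta^c\mid y)$. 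Applying $Q_{\pi^1}$ and $Q_{\pi^2}$ and invoking Tonelli, it then suffices to establish
\[
\int_\XX W(y)\,(mQ_{\pi^i})(dy)<+\infty\qquad(i=1,2).
\]
Granting this, $G$ converges absolutely $(mQ_{\pi^i})$-almost everywhere, $Q_\pi G$ is defined $m$-almost everywhere, and $\int_\XX\|Q_\pi G\|\,dm\le\|g\|\sum_{i=1,2}\int_\XX W\,d(mQ_{\pi^i})<+\infty$.

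The heart of the argument, and the single place where the absorbing hypothesis is really used, is this last finiteness. For $i=1,2$ I would introduce the randomized Markov policy $\rho^i=\{\rho^i_n\}_{n\in\NN}$ defined by $\rho^i_n=\gamma$ for $n\neq N$ and $\rho^i_N=\pi^i$; it belongs to $\mathbf{\Pi}$, so $\mathbb{E}_{\rho^i}[T_\Delta]<+\infty$ by Definition \ref{absorbing}. Since $mQ_{\pi^i}Q_\gamma^k=\eta Q_\gamma^N\mathbb{I}_{\Delta^c}Q_{\pi^i}Q_\gamma^k\le\eta Q_\gamma^N Q_{\pi^i}Q_\gamma^k=\eta Q_{\rho^i}^{(N+1+k)}$ as measures (the kernel $\mathbb{I}_{\Delta^c}$ only removes mass, and stochastic kernels are monotone), and since $\{X_t\in\Delta^c\}=\{T_\Delta>t\}$ by absorption, Tonelli gives
\[
\int_\XX W\,d(mQ_{\pi^i})=\sum_{k\in\NN}(mQ_{\pi^i}Q_\gamma^k)(\Delta^c)\le\sum_{k\in\NN}\eta Q_{\rho^i}^{(N+1+k)}(\Delta^c)=\sum_{k\in\NN}\mathbb{P}_{\rho^i}(T_\Delta>N+1+k).
\]
The right-hand side is the tail $\sum_{t\ge N+1}\mathbb{P}_{\rho^i}(T_\Delta>t)$ of the convergent series $\sum_{t\in\NN}\mathbb{P}_{\rho^i}(T_\Delta>t)=\mathbb{E}_{\rho^i}[T_\Delta]<+\infty$, which closes the estimate. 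Combining with the bound on the first summand yields $f\in L^1(m)$, hence $\nu$ is finite with $\nu\ll\eta Q_\gamma^N\mathbb{I}_{\Delta^c}$. The only remaining points are routine: measurability of $G$ (take the almost-everywhere limit of the partial sums $\sum_{k=0}^n Q_\gamma^k g_\gamma$, extended by $0$ on the negligible divergence set) and of $x\mapsto Q_\pi G(x)$, both standard once the integrability above is in hand.
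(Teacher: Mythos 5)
Your proof is correct and follows essentially the same route as the paper: the paper also introduces the Markov policies $\sigma^{i}$ (your $\rho^{i}$) that apply $\gamma$ at every stage except stage $N$, where they apply $\pi^{i}$, and deduces the $\eta Q_{\gamma}^{N}\mathbb{I}_{\Delta^{c}}$-integrability of $g_{\pi^{i}}+Q_{\pi^{i}}\sum_{k}Q_{\gamma}^{k}g_{\gamma}$ from the finiteness of $\mathbb{E}_{\sigma^{i}}\big[\sum_{j}|g|(X_{j},A_{j})\big]$, which is exactly the tail-of-$\mathbb{E}_{\rho^{i}}[T_{\Delta}]$ estimate you carry out explicitly. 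Your version merely unpacks the paper's one-line appeal to the $\Delta$-absorbing property into the $W$-function and tail-sum bounds, which is a matter of presentation rather than substance.
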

\begin{proof}
Consider the Markov policy $\sigma^{i}=\{\sigma^{i}_n\}_{n\ge0}$ for $i=1,2$ defined by
$\sigma^{i}_{k}(\cdot |x)=\gamma(\cdot | x)$ for $k\neq N$ and 
$\sigma^{i}_{N}(\cdot |x) = \pi^{i}(\cdot | x)$, for $x\in\XX$. 
Since $\mathsf{M}$ is $\Delta$-absorbing
\begin{align} 
\label{reward-expression-1}
\mathbb{E}_{\sigma^{i}} \Big[\sum_{j\in\NN} h(X_{j},A_{j})\Big] 
& = \sum_{j=0}^{N-1} \eta Q^{j}_{\gamma} h_{\gamma} + \eta Q^{N}_{\gamma} \mathbb{I}_{\Delta^c} \big[ h_{\pi^{i}} 
+  Q_{\pi^{i}} \sum_{k\in\NN} Q_{\gamma}^{k} \mathbb{I}_{\Delta^c} h_{\gamma} \big] <+\infty
\end{align}
for $N\in\NN$ and $h(x,a)=|g(x,a)|$.
Consequently, the components of the $p$-dimensional functions $g_{\pi^{i}}  + Q_{\pi^{i}} \sum_{k\in\NN} Q_{\gamma}^{k}  g_{\gamma}$ for $i=1,2$
are $\eta Q^{N}_{\gamma} \mathbb{I}_{\Delta^c}$-measurable, see Definition 2.1.10 in \cite{bogachev07} and so, are the components of the function 
$g_{\pi}  + Q_{\pi} \sum_{k\in\NN} Q_{\gamma}^{k}  g_{\gamma}$.
By using again inequality \eqref{reward-expression-1}, it is also $\eta Q^{N}_{\gamma} \mathbb{I}_{\Delta^c}$-integrable. Therefore, the $p$-dimensional signed measure $\nu$ defined by
$$\nu(dx)= \Big[ g_{\pi}(x)+Q_{\pi} \sum_{k\in\NN} Q_{\gamma}^{k}  g_{\gamma} (x)\Big]  \: \eta Q_{\gamma}^{N} \mathbb{I}_{\Delta^c} (dx)$$
is of finite total variation and absolutely continuous with respect to $\eta Q_{\gamma}^{N}\mathbb{I}_{\Delta^c}$ showing the result.
\end{proof}
The next Theorem is the main result of this subsection.
\begin{theorem}
\label{Castaing-Valadier}
Assume that the Markov controlled model $\mathsf{M}$ is $\Delta$-absorbing and atomless.
Consider a bounded function $g\in \bcal{L}^{0}_{\RR^{p}}(\XX\times\AA)$ satisfying $g(x,a)=0_{p}$ for $(x,a)\in\Delta\times\AA$ and a constant $\alpha\in\RR^p$ for $p\in\NN^{*}$.
Then, we have
$$\ext\big(\bcal{O}(g,\alpha)\big) = \bcal{O}_{\mathbb{D}}(g,\alpha).$$
\end{theorem}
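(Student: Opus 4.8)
The statement splits into two inclusions, and only one of them requires the atomless hypothesis. For the easy inclusion $\bcal{O}_{\mathbb{D}}(g,\alpha)\subset\ext\big(\bcal{O}(g,\alpha)\big)$, I would argue purely from Proposition \ref{Extreme-point-Occupation-measure} together with the fact that $\bcal{O}(g,\alpha)=\bcal{O}\inter\bcal{H}(g,\alpha)$ is the intersection of $\bcal{O}$ with an affine subspace. Indeed, if $\phi\in\mathbb{D}$ with $\mu_{\phi}(g)=\alpha$, then $\mu_{\phi}\in\ext(\bcal{O})$ by Proposition \ref{Extreme-point-Occupation-measure}; and any extreme point of a convex set $\mathcal{C}$ that happens to lie in an affine set $\mathcal{H}$ is automatically an extreme point of $\mathcal{C}\inter\mathcal{H}$, since a proper convex decomposition inside $\mathcal{C}\inter\mathcal{H}$ is in particular a proper convex decomposition inside $\mathcal{C}$. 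This yields $\mu_{\phi}\in\ext\big(\bcal{O}(g,\alpha)\big)$ and uses neither atomlessness nor Lyapunov-type arguments.

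The substantive inclusion is $\ext\big(\bcal{O}(g,\alpha)\big)\subset\bcal{O}_{\mathbb{D}}(g,\alpha)$. Since $\bcal{O}=\bcal{O}_{\mathbb{S}}$, any extreme point is $\mu_{\gamma}$ for some $\gamma\in\mathbb{S}$, and it suffices to show $\mu_{\gamma}\in\bcal{O}_{\mathbb{D}}$ (membership in $\bcal{H}(g,\alpha)$ is then inherited). I would argue by contradiction, following the pattern of Proposition \ref{Extreme-point-Occupation-measure}. If $\mu_{\gamma}\notin\bcal{O}_{\mathbb{D}}$, then by Proposition \ref{Castaing-Valadier-2}\ref{Lambda-null-set} the set $\mathbf{\Lambda}$ is not $\mu_{\gamma}^{\XX}$-null, and Proposition \ref{Castaing-Valadier-2}\ref{Existence-gamma1-2} produces $\gamma_{1},\gamma_{2}\in\mathbb{S}$ with $\gamma\pm\frac12[\gamma_{1}-\gamma_{2}]\in\mathbb{S}$ and a $\bfrak{X}$-measurable set $\mathbf{\Xi}$ with $\mu_{\gamma}^{\XX}(\mathbf{\Xi})>0$ on which $\gamma_{1}\neq\gamma_{2}$. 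Writing $w=\gamma_{1}-\gamma_{2}$ and choosing $N$ with $\eta Q_{\gamma}^{N}\mathbb{I}_{\Delta^c}(\mathbf{\Xi})>0$, I would, for any measurable $s\colon\XX\to\{-1,+1\}$ supported on $\mathbf{\Xi}$, introduce the two Markov policies $\sigma^{\pm}$ coinciding with $\gamma$ off stage $N$ and using $\gamma\pm\tfrac{s}{2}w$ at stage $N$. Because the occupation measure is affine in the single stage-$N$ kernel, $\mu_{\gamma}=\frac12(\mu_{\sigma^{+}}+\mu_{\sigma^{-}})$, and by Lemma \ref{Castaing-Valadier-1} the two measures are distinct (the stage-$N$ kernels differ exactly on $\mathbf{\Xi}$). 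It remains to choose $s$ so that both $\sigma^{\pm}$ satisfy the constraint, which then contradicts extremality of $\mu_{\gamma}$ in $\bcal{O}(g,\alpha)$.

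The crux is thus to kill the constraint defect. By Lemma \ref{Tech-Castaing-Valadier-3} applied to $\pi^{1}=\gamma+\frac12 w$ and $\pi^{2}=\gamma-\frac12 w$, the constraint values satisfy $\mu_{\sigma^{\pm}}(g)=\alpha\pm\frac12\int_{\mathbf{\Xi}}s\,D\,d\mu_{N}$, where $\mu_{N}=\eta Q_{\gamma}^{N}\mathbb{I}_{\Delta^c}$ and $D\in L^{1}(\mu_{N};\RR^{p})$ is the density furnished by that lemma. Hence I need $s=\pm1$ on $\mathbf{\Xi}$ with $\int_{\mathbf{\Xi}}s\,D\,d\mu_{N}=0_{p}$, i.e. a bisection of $\mathbf{\Xi}$ balancing the vector measure $A\mapsto\int_{A}D\,d\mu_{N}$. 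This is precisely where the atomless hypothesis enters, through Lyapunov's convexity theorem.

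The main obstacle, and the step I would treat most carefully, is that atomlessness of the model is assumed only on the countably generated sub-$\sigma$-algebra $\bfrak{X}_{0}$, whereas $\mathbf{\Xi}$ and $D$ are merely $\bfrak{X}$-measurable. I would first verify that $\mu_{N}$ is atomless on $\bfrak{X}_{0}$ by propagating the atomlessness of $\eta\mathbb{I}_{\Delta^c}$ and of $Q\mathbb{I}_{\Delta^c}(\cdot\mid x,a)$ through the dynamics, using that a mixture of measures that are atomless on $\bfrak{X}_{0}$ is again atomless on $\bfrak{X}_{0}$, so that $\eta Q_{\gamma}^{N}\mathbb{I}_{\Delta^c}$ inherits the property. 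I would then consider the $\RR^{p}$-valued measure $\lambda(B)=\int_{\mathbf{\Xi}\inter B}D\,d\mu_{N}$ for $B\in\bfrak{X}_{0}$; since $\lambda\ll\mu_{N}$ on $\bfrak{X}_{0}$, it is atomless on $\bfrak{X}_{0}$, so by Lyapunov its range is convex and contains both $0_{p}=\lambda(\emptyset)$ and $\lambda(\XX)=\int_{\mathbf{\Xi}}D\,d\mu_{N}$, hence their midpoint: there is $B\in\bfrak{X}_{0}$ with $\lambda(B)=\frac12\lambda(\XX)$. Taking $s=+1$ on $\mathbf{\Xi}\inter B$ and $s=-1$ on $\mathbf{\Xi}\setminus B$ gives $\int_{\mathbf{\Xi}}s\,D\,d\mu_{N}=2\lambda(B)-\lambda(\XX)=0_{p}$, which produces the required contradiction. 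The case where only $N=0$ charges $\mathbf{\Xi}$ is handled identically, using directly that $\eta\mathbb{I}_{\Delta^c}$ is atomless.
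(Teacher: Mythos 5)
Your proof is correct and follows essentially the same route as the paper's: perturb $\gamma$ at a single stage $N$ using the kernels from Proposition \ref{Castaing-Valadier-2}\ref{Existence-gamma1-2}, balance the constraint defect by halving the vector measure of Lemma \ref{Tech-Castaing-Valadier-3} via a Lyapunov-type convexity argument (the paper invokes Halmos's lemmas, you invoke Lyapunov's theorem directly --- the same tool), then conclude with Lemma \ref{Castaing-Valadier-1} and Proposition \ref{Castaing-Valadier-2}\ref{Lambda-null-set}, the easy inclusion being handled by the identical face argument. The one point where you are more explicit than the paper is the justification that $\eta Q_{\gamma}^{N}\mathbb{I}_{\Delta^c}$ is atomless --- propagating the hypothesis through the dynamics as a mixture over the countably generated sub-$\sigma$-algebra $\bfrak{X}_{0}$, and treating $N=0$ separately on $\bfrak{X}$ --- a step the paper asserts without detail; your treatment of it is sound.
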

\begin{proof}
Let us first show that $\ext\big(\bcal{O}(g,\alpha)\big) \subset \bcal{O}_{\mathbb{D}}(g,\alpha)$.
There is no loss of generality to suppose that there exists $\mu_{\gamma}\in\bcal{O}(g,\alpha)$ for $\gamma\in\mathbb{S}$, otherwise the result is straightforward.
From Proposition \ref{Castaing-Valadier-2}\ref{Existence-gamma1-2}, there exist  $\gamma_{1}$ and $\gamma_{2}$ in $\mathbb{S}$ such that
$\gamma \pm \frac{1}{2} [\gamma_{1}-\gamma_{2}]\in\mathbb{S}$  and $\mathbf{\Xi}=\{x\in\XX: \gamma_{1}(\cdot |x)\neq \gamma_{2}(\cdot |x)\}$ is a $\bfrak{X}$-measurable set.

Suppose that $\mu_{\gamma}^{\XX}(\mathbf{\Xi})>0$, then there exists $N\in\NN$ such that $\eta Q_{\gamma}^{N}(\mathbf{\Xi})>0$.
Write $\varphi= \frac{1}{2} [\gamma_{1}-\gamma_{2}]$.
According to Lemma \ref{Tech-Castaing-Valadier-3}, the $p$-dimensional signed measure $\nu$ on $\XX$ defined by 
$$\nu(dx)= \Big[ g_{\varphi}(x)+Q_{\varphi}\sum_{k\in\NN} Q_{\gamma}^{k} g_{\gamma} (x) \Big]  \: \eta Q_{\gamma}^{N} \mathbb{I}_{\Delta^c} (dx)$$
is of finite total variation and absolutely continuous with respect to $\eta Q_{\gamma}^{N}\mathbb{I}_{\Delta^c}$.
By hypothesis, the probability $\eta Q_{\gamma}^{N}\mathbb{I}_{\Delta^c}$ is atomless and so, 
by Lemma 6 in \cite{halmos48} this measure is convex as defined on page 417 in \cite{halmos48}.
From Lemma 3 in \cite{halmos48},  there exists a measurable subset $\mathbf{\Xi}_{*}$
of $\mathbf{\Xi}$ satisfying
\begin{align}
\label{Def-atom-r-Qrbar}
\int_{\mathbf{\Xi}_{*}} \Big[g_{\varphi}(x)+Q_{\varphi} \sum_{k\in\NN} Q_{\gamma}^{k} g_{\gamma} (x) \Big]  \: \eta Q_{\gamma}^{N} \mathbb{I}_{\Delta^c} (dx)
& =\frac{1}{2}\int_{\mathbf{\Xi}} \Big[g_{\varphi}(x)+Q_{\varphi} \sum_{k\in\NN} Q_{\gamma}^{k}  g_{\gamma} (x)\Big] \: \eta Q_{\gamma}^{N} \mathbb{I}_{\Delta^c} (dx),
\end{align}

From equation \eqref{gamma-property-2} we can define the Markov policies $\sigma^{1}=\{\sigma^{1}_n\}_{n\ge0}$ and $\sigma^{2}=\{\sigma^{2}_n\}_{n\ge0}$ by
$\sigma^{1}_{k}(\cdot |x)=\sigma^{2}_{k}(\cdot |x)=\gamma(\cdot | x)$ for $k\neq N$ and 
\begin{align}
\label{Def-sigma1-N}
\sigma^{1}_{N}(\cdot |x) & =\gamma(\cdot | x)+\varphi(\cdot | x) \mathbf{I}_{\mathbf{\Xi}_{*}}(x) - \varphi(\cdot | x) \mathbf{I}_{\mathbf{\Xi}\setminus\mathbf{\Xi}_{*}}(x),
 \\
\label{Def-sigma2-N}
\sigma^{2}_{N}(\cdot |x) & =\gamma(\cdot | x)-\varphi(\cdot | x) \mathbf{I}_{\mathbf{\Xi}_{*}}(x) + \varphi(\cdot | x) \mathbf{I}_{\mathbf{\Xi}\setminus\mathbf{\Xi}_{*}}(x),
\end{align}
for $x\in\XX$.
Observe that
\begin{align}
\label{Def-mu-sigma-i}
\mu_{\sigma^{i}} = \sum_{k=0}^{N-1} \eta Q_{\gamma}^{k} \mathbb{I}_{\Delta^c} \otimes \gamma + \eta Q_{\gamma}^{N} \mathbb{I}_{\Delta^c} \otimes\sigma^{i}_{N}
+\eta Q_{\gamma}^{N}  Q_{\sigma^{i}_{N}} \sum_{j=0}^{\infty} Q_{\gamma}^{j}  \mathbb{I}_{\Delta^c} \otimes \gamma
\end{align}
for $i=1,2$.

Combining the definition of $\sigma^{i}_{N}$ (see equations \eqref{Def-sigma1-N}-\eqref{Def-sigma2-N}) and the expression of the occupation measures $\mu_{\sigma^{i}}$
given in equation \eqref{Def-mu-sigma-i} for $i=1,2$ we obtain
\begin{align}
\mu_{\gamma}=\frac{1}{2} \big( \mu_{\sigma^{1}} + \mu_{\sigma^{2}} \big).
\label{convexcombination}
\end{align}
We also have 
\begin{align}
\mu_{\sigma^{i}}\in \bcal{O}(g,\alpha)
\label{equality-reward}
\end{align}
for $i=1,2$. Indeed, equation \eqref{Def-atom-r-Qrbar} leads to
\begin{align*}
\int_{\mathbf{\Xi}_{*}} \Big[ g_{\varphi}(x)+Q_{\varphi} \sum_{k\in\NN} Q_{\gamma}^{k}  g_{\gamma} (x) \Big] \:  & \eta Q_{\gamma}^{N} \mathbb{I}_{\Delta^c}(dx)
 =\int_{\mathbf{\Xi}\setminus\mathbf{\Xi}_{*}} \Big[g_{\varphi}(x)+Q_{\varphi} \sum_{k\in\NN} Q_{\gamma}^{k} g_{\gamma} (x) \Big] \: \eta Q_{\gamma}^{N} \mathbb{I}_{\Delta^c}(dx).
\end{align*}
Recalling the definition of $\sigma^{i}_{N}$ for $i=1,2$ (see equations \eqref{Def-sigma1-N}-\eqref{Def-sigma2-N}), this implies that
\begin{align*}
\int_{\XX} \Big[g_{\sigma^{i}_{N}}(x)+Q_{\sigma^{i}_{N}} \sum_{k\in\NN} Q_{\gamma}^{k} g_{\gamma} (x) \Big] \: \eta Q_{\gamma}^{N}\mathbb{I}_{\Delta^c}(dx)
=\int_{\XX} \Big[g_{\gamma}(x)+Q_{\gamma}\sum_{k\in\NN} Q_{\gamma}^{k} g_{\gamma} (x)\Big] \: \eta Q_{\gamma}^{N}\mathbb{I}_{\Delta^c}(dx).
\end{align*}
From equation \eqref{Def-mu-sigma-i}, we thus obtain $\mu_{\sigma^{1}}(g)= \mu_{\sigma^{2}}(g)= \mu_{\gamma}(g)=\alpha$ yielding \eqref{equality-reward}.
Now observe that $\{x\in\XX : \sigma^{1}_{N}(\cdot |x)\neq \sigma^{2}_{N}(\cdot |x)\}=\{x\in\XX : \gamma_{1}(\cdot |x)\neq \gamma_{2}(\cdot |x)\}=\mathbf{\Xi}$.
From Lemma \ref{Castaing-Valadier-1}, we get that 
\begin{align}
\mu_{\sigma^{1}}\neq\mu_{\sigma^{2}}.
\label{mu1-neq-mu2}
\end{align}
Finally and recalling equations  \eqref{convexcombination}, \eqref{equality-reward} and \eqref{mu1-neq-mu2},
we have established that  if $\mu_{\gamma}^{\XX}(\mathbf{\Xi})>0$ for $\mu_{\gamma} \in \bcal{O}(g,\alpha)$ then 
there exist $\mu_{\sigma^{1}}$ and $\mu_{\sigma^{2}}$ in $\bcal{O}(g,\alpha)$ satisfying $\mu_{\gamma}=\frac{1}{2} \big( \mu_{\sigma^{1}} + \mu_{\sigma^{2}} \big)$ and also $\mu_{\sigma^{1}} \neq \mu_{\sigma^{2}}$ showing that $\mu_{\gamma}$ is not an extreme point of 
$\bcal{O}(g,\alpha)$. Therefore, if $\mu_{\gamma}$ is an extreme point of $\bcal{O}(g,\alpha)$, then $\mu_{\gamma}^{\XX}(\mathbf{\Xi})=0$ and so, the set $\mathbf{\Lambda}$ introduced in Proposition \ref{Castaing-Valadier-2} is a
$\mu_{\gamma}^{\XX}$-null set and the result follows by  Proposition \ref{Castaing-Valadier-2}\ref{Lambda-null-set}.

Under the assumption that the model $\mathsf{M}$ is $\Delta$-absorbing, we can get the reverse inclusion, that is,
$\bcal{O}_{\mathbb{D}}(g,\alpha) \subset \ext\big(\bcal{O}(g,\alpha)\big)$
Let us consider $\mu_{\Phi}\in \bcal{O}_{\mathbb{D}}(g,\alpha)$ for $\Phi\in\mathbb{D}$. Since $\ext(\bcal{O})=\bcal{O}_{\mathbb{D}}$ (see Proposition \ref{Extreme-point-Occupation-measure}), we have
that $\mu_{\Phi}\in \ext(\bcal{O})\inter\bcal{H}(g,\alpha) \subset \ext(\bcal{O}\inter\bcal{H}(g,\alpha))$ showing the claim.
\end{proof}

\section{Sufficiency of chattering and deterministic Markov policies}
\label{Sec-4}
The main objective of this section is to show that for a uniformly $\Delta$-absorbing model $\mathsf{M}$, the set of chattering Markov policies of order $2d+1$ is a sufficient class of control strategies. In other words, for any control policy $\pi\in\mathbf{\Pi}$, there exists a chattering Markov policy $\gamma$ of order $2d+1$ satisfying $\mathcal{R}(\gamma)=\mathcal{R}(\pi)$.
To our best knowledge, this result is new and it will be reinforced in the following section by showing the stronger result that the set of chattering stationary policies of order $d+1$ is in fact sufficient.
If in addition the model is atomless then we will show that the set of deterministic Markov policy is sufficient.
As already mentioned in the introduction, the sufficiency of deterministic Markov policies for an atomless model with Borel state space is well known. The reader is referred  to the references \cite{piunovskiy00ef,piunovskiy02ef} where the authors obtained this type of results in a very general context by considering a vector performance functional having the form of total rewards (\textit{i.e.} the model under consideration are not necessarily absorbing).
Nevertheless, we believe it is worthwhile to propose this new statement, which deals with general state space,
but also because we are proposing a new direct approach to prove this result, based on the Young measure theory.

We start this section by showing that for an arbitrary Markov randomized policy $\pi\in\mathbf{\Pi}$ and integer $t\in\NN$, we can replace the control $\pi_{t}$ at step $t$ by a finitely supported control $\gamma\in\mathbb{C}_{2p+1}$ without 
changing both the values of the vectors of expected rewards at step $t$ and the expected total rewards by assuming $\mathsf{M}$ is $\Delta$-absorbing. If in addition $\mathsf{M}$ is atomless then the control $\gamma$ can be chosen in $\mathbb{D}$ (\textit{i.e.} a deterministic control).
To get these two results we will apply the finite support equivalence Theorem and the Dirac equivalence Theorem, two well known results coming from the Young measure theory, see Theorem 8.2 and Theorem 9.2 in \cite{balder95}.
\begin{proposition}
\label{Theo-invariance}
Suppose that $\mathsf{M}$ is $\Delta$-absorbing.
Consider a bounded function $g\in \bcal{L}^{0}_{\RR_{+}^{p}}(\XX\times\AA)$ for $p\in\NN^{*}$ satisfying $g(x,a)=0_{p}$ for $(x,a)\in\Delta\times\AA$, an arbitrary Markov randomized policy $\pi=\{\pi_{k}\}_{k\in\NN}$
with $\pi_{k}\in\mathbb{S}$ and an integer $t\in\NN$.
Then the following properties hold. 
\begin{enumerate}[label=(\alph*)]
\item \label{Theo-invariance-a} There exists a policy $\tilde{\pi}=\{\tilde{\pi}_{k}\}_{k\in\NN}$
defined by $\tilde{\pi}_{j}=\pi_{j}$ for $j\neq t$ and $\tilde{\pi}_{t}(da|x)=\gamma(da|x)$
for some finitely supported kernel $\gamma\in\mathbb{C}_{2p+1}$ of order $2p+1$ satisfying
\begin{align}
\mathbb{E}_{\tilde{\pi}} [g(X_{t},A_{t})] = \mathbb{E}_{\pi} [g(X_{t},A_{t})] \quad \text{and} \quad  \mathbb{E}_{\tilde{\pi}} \Big[\sum_{j\in\NN} g(X_{j},A_{j})\Big]
= \mathbb{E}_{\pi} \Big[\sum_{j\in\NN} g(X_{j},A_{j} ) \Big] .
\label{Eq-Theo-invariance}
\end{align}
\item \label{Theo-invariance-b} If in addition $\mathsf{M}$ is atomless then there exists $\phi\in\mathbb{D}$ for which the policy $\tilde{\pi}=\{\tilde{\pi}_{k}\}_{k\in\NN}$
defined by $\tilde{\pi}_{j}=\pi_{j}$ for $j\neq t$ and $\tilde{\pi}_{t}(da|x)=\delta_{\phi(x)}(da)$ satisfies the equation \eqref{Eq-Theo-invariance}.
\end{enumerate}
\end{proposition}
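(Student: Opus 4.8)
The plan is to exploit the Markov structure of $\pi$ so that replacing $\pi_t$ by another stationary kernel leaves untouched everything that happens strictly before time $t$ and affects the expected total reward only through (i) the one-step reward collected at time $t$ and (ii) the way the post-$t$ evolution is launched from $X_t$. Writing the expected total reward of a Markov policy as $\sum_{j\in\NN}\eta Q^{(j)}_\pi g_{\pi_j}$ (using that $g$ vanishes on $\Delta$), I would split the sum at $j=t$: the terms with $j<t$ do not involve $\pi_t$; the term $j=t$ equals $\eta Q^{(t)}_\pi g_{\pi_t}$; and the terms $j>t$ factor as $\eta Q^{(t)}_\pi Q_{\pi_t} V_{t+1}$, where $V_{t+1}=\sum_{j>t}Q_{\pi_{t+1}}\cdots Q_{\pi_{j-1}}g_{\pi_j}$ is the reward-to-go generated by the fixed tail $(\pi_{t+1},\pi_{t+2},\dots)$. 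Setting $m_t=\eta Q^{(t)}_\pi\mathbb{I}_{\Delta^c}$ and $F=QV_{t+1}$, the reward collected from time $t$ onwards is $m_t(g_{\pi_t})+m_t(F_{\pi_t})$, while the reward at time $t$ alone is $m_t(g_{\pi_t})$. Hence it suffices to find a replacement kernel satisfying $m_t(g_\gamma)=m_t(g_{\pi_t})$ and $m_t(F_\gamma)=m_t(F_{\pi_t})$, that is, to match the single $2p$-dimensional nonnegative integrand $\mathbf{g}=(g,F)$ against $m_t$.

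Both $g$ and $F$ vanish on $\Delta\times\AA$ (since $V_{t+1}=0$ on $\Delta$ and $Q(\Delta|x,a)=1$ there), so integrating against $\eta Q^{(t)}_\pi$ is the same as integrating against $m_t$; moreover $m_t(g_{\pi_t})\le\|g\|$ and $m_t(F_{\pi_t})=\mathbb{E}_\pi[\sum_{j>t}g(X_j,A_j)]\le\|g\|\,\mathbb{E}_\pi[T_\Delta]<+\infty$ because $\mathsf{M}$ is $\Delta$-absorbing, so $\mathbf{g}$ is $m_t$-integrable and $m_t$-a.e. finite (redefine $F:=0$ on the null set where it is infinite, making $\mathbf{g}\in\bcal{L}^0_{\RR^{2p}_+}(\XX\times\AA)$). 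For item \ref{Theo-invariance-a} I would then apply Theorem \ref{finitely-supported-young-measure} with $\mu=m_t$, $\kappa=\pi_t$ and integrand $\mathbf{g}$ of dimension $n=2p$: the measurability conditions \ref{K-measurability}--\ref{selector-theta} furnish exactly the hypotheses $\KK\in\bfrak{X}\otimes\bfrak{B}(\AA)$ and the existence of the selector $\theta$. This yields a finitely supported kernel $\gamma\in\mathbb{C}_{2p+1}$ with $\gamma(\AA(x)|x)=1$ and $\int_\AA\mathbf{g}(x,a)\gamma(da|x)=\int_\AA\mathbf{g}(x,a)\pi_t(da|x)$ for $m_t$-a.e. $x$; integrating against $m_t$ delivers the two matches, hence \eqref{Eq-Theo-invariance} for the modified policy $\tilde\pi$.

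For item \ref{Theo-invariance-b} the same reduction applies, but I would replace the finite-support theorem by the Dirac (Dvoretzky--Wald--Wolfowitz type) equivalence theorem of Young-measure theory (Theorem 9.2 in \cite{balder95}), which requires the base measure to be atomless. The extra step is therefore to prove that $m_t=\eta Q^{(t)}_\pi\mathbb{I}_{\Delta^c}$ is atomless on the countably generated sub-$\sigma$-algebra $\bfrak{X}_0$ of Definition \ref{def-atomless-MDP}. For $t=0$ this is the hypothesis that $\eta\mathbb{I}_{\Delta^c}$ is atomless; for $t\ge1$ I would write $m_t(B)=\int Q\mathbb{I}_{\Delta^c}(B|x,a)\,\lambda_{t-1}(dx,da)$, where $\lambda_{t-1}$ is the joint law of $(X_{t-1},A_{t-1})$ restricted to $X_{t-1}\in\Delta^c$, and use that each $Q\mathbb{I}_{\Delta^c}(\cdot|x,a)$ is atomless on $\bfrak{X}_0$, so that $m_t$ is a mixture of $\bfrak{X}_0$-atomless measures and hence itself atomless on $\bfrak{X}_0$. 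Applying the Dirac equivalence theorem to the $2p$ components of $\mathbf{g}$ then produces a measurable selector $\phi\in\mathbb{D}$ realizing $m_t(g_{\delta_\phi})=m_t(g_{\pi_t})$ and $m_t(F_{\delta_\phi})=m_t(F_{\pi_t})$, which gives \eqref{Eq-Theo-invariance} with $\tilde\pi_t=\delta_{\phi(\cdot)}$.

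The main obstacle I anticipate lies in item \ref{Theo-invariance-b}: establishing the atomlessness of $m_t$ on $\bfrak{X}_0$ and, more delicately, reconciling the $\sigma$-algebra $\bfrak{X}_0$ on which atomlessness is available with the full $\sigma$-algebra $\bfrak{X}\otimes\bfrak{B}(\AA)$ on which $\mathbf{g}$ is measurable, so that the Dirac equivalence theorem is invoked in a form compatible with a general measurable state space (this is precisely why Definition \ref{def-atomless-MDP} is phrased through a countably generated sub-$\sigma$-algebra). By contrast, item \ref{Theo-invariance-a} is comparatively routine once the reward-to-go decomposition is in place; the only care required there is the $m_t$-a.e. finiteness of $F$ and the bookkeeping that the terms with $j<t$ are genuinely unaffected by the substitution.
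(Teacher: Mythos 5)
Your plan for item \ref{Theo-invariance-a} is essentially the paper's proof: the same split of the total reward at time $t$, the same reward-to-go function (your $V_{t+1}$ is the paper's $g_t^{\pi}$ and your $F$ its $Qg_t^{\pi}$), the same truncation on the set where it is infinite, and the same application of Theorem \ref{finitely-supported-young-measure} to the $2p$-dimensional integrand $(g,F)$. For item \ref{Theo-invariance-b}, your mixture argument for the atomlessness of $\eta Q^{(t)}_{\pi}\mathbb{I}_{\Delta^c}$ actually supplies detail that the paper only asserts, and the ``reconciliation'' you worry about is not an obstacle: atomlessness of a finite measure on a countably generated sub-$\sigma$-algebra implies atomlessness on all of $\bfrak{X}$ (an atomless finite measure admits, for every $\varepsilon>0$, a finite $\bfrak{X}_{0}$-measurable partition into sets of measure less than $\varepsilon$, and a $\mu$-atom in $\bfrak{X}$ would have to sit inside one of them up to $\mu$-null sets).

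The genuine gap is in item \ref{Theo-invariance-b}: you assert that the Dirac equivalence theorem (Theorem 9.2 in \cite{balder95}) ``produces a measurable selector $\phi\in\mathbb{D}$''. It does not. Unlike Theorem \ref{finitely-supported-young-measure}, whose conclusion includes $\gamma(\AA(x)|x)=1$ for all $x$, the Dirac equivalence theorem carries no feasibility constraint: it returns an arbitrary measurable map $\phi:\XX\to\AA$ matching the prescribed integrals, which may well violate $\phi(x)\in\AA(x)$. This is precisely why the paper applies it to the $(2p+1)$-dimensional integrand $\big(\mathbf{I}_{\KK^c},g,h\big)$ rather than to $(g,h)$: matching the integral of $\mathbf{I}_{\KK^c}$, which equals $0$ under $\pi_t$, forces $\delta_{\phi(x)}(\AA(x))=1$ for $\eta Q^{(t)}_{\pi}\mathbb{I}_{\Delta^c}$-almost every $x$, and the selector $\theta$ of \ref{selector-theta} is then used to redefine $\phi$ on the exceptional null set so that the modified $\phi$ is a genuine element of $\mathbb{D}$ (the matching equations survive because the modification occurs on a null set). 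Without this step your $\tilde{\pi}$ need not be an admissible policy. A secondary looseness occurs in item \ref{Theo-invariance-a}: you verify that the set $\Gamma$ where $Qg_t^{\pi}$ is infinite is null only under $\eta Q^{(t)}_{\pi}\otimes\pi_t$, but to conclude $\mathbb{E}_{\tilde{\pi}}\big[\sum_{j} g(X_j,A_j)\big]=\mathbb{E}_{\pi}\big[\sum_{j} g(X_j,A_j)\big]$ you also need $\eta Q^{(t)}_{\pi}\otimes\gamma(\Gamma)=0$, since otherwise the true tail term under $\tilde{\pi}$ is $+\infty$ while the matched, truncated quantity is finite. The paper secures this upfront by observing that $\Delta$-absorption gives $\eta Q^{(t)}_{\pi}\otimes\kappa(\Gamma)=0$ for \emph{every} $\kappa\in\mathbb{S}$, not just $\kappa=\pi_t$; this is the same finiteness argument you already used, applied to the modified policy, so both defects are repairable along the paper's lines.
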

\begin{proof}
For a fixed Markov randomized policy $\pi=\{\pi_{k}\}_{k\in\NN}$ and a fixed integer $t\in\NN$, we have that the function $g_{t}^{\pi}$ defined on $\XX$ by
$$g_{t}^{\pi}(x)=  g_{\pi_{t+1}}(x)+Q_{\pi_{t+1}} g_{\pi_{t+2}}(x)+Q_{\pi_{t+1}}Q_{\pi_{t+2}} g_{\pi_{t+3}}(x)+\cdots \: $$
is measurable and takes values in $\widebar{\RR}_{+}^{p}$. Therefore, the function $Qg_{t}^{\pi}$ is also measurable and takes values in $\widebar{\RR}_{+}^{p}$.
Now consider an arbitrary $\kappa\in\mathbb{S}$ and the Markov randomized policy $\sigma=\{\sigma_{k}\}_{k\in\NN}$ defined by $\sigma_{k}=\pi_{k}$ for $k\neq t$ and $\sigma_{t}=\kappa$.
By Fubini's Theorem and since $\mathsf{M}$ is $\Delta$-absorbing we have 
\begin{align} 
\label{reward-expression}
\mathbb{E}_{\sigma} \Big[\sum_{j\in\NN} g(X_{j},A_{j})\Big] & 
= \sum_{j=0}^{t-1} \eta Q^{(j)}_{\pi} g_{\pi_{j}} + \eta Q^{(t)}_{\pi} g_{\kappa}  + \eta Q^{(t)}_{\pi} Q_{\kappa} g_{t}^{\pi} <+\infty.
\end{align}
Consequently, $\eta Q^{(t)}_{\pi}\otimes\kappa (\Gamma) = 0 $ where $\Gamma=\big\{(x,a)\in\XX\times\AA : | Qg_{t}^{\pi}(x,a) | =+\infty \big\}$ for any $\kappa\in\mathbb{S}$.
Let us write 
$$h(x,a)=Qg_{t}^{\pi}(x,a) \mathbf{I}_{\Gamma^c}(x,a).$$

To show the first item, we apply Theorem \ref{finitely-supported-young-measure} to the finite measure $\eta Q^{(t)}_{\pi}$, the kernel $\pi_{t}\in\mathbb{S}$ and the $\RR^{2p}$-valued measurable function 
$(g,h)$ to get a stochastic kernel $\gamma\in\mathbb{C}_{2p+1}$ satisfying
\begin{align}
g_{\pi_{t}} (x) = g_{\gamma}(x) \text{ and } h_{\pi_{t}} (x) = h_{\gamma}(x), 
\label{exist-Deter-eq1-2}
\end{align}
for $\eta Q^{(t)}_{\pi}$-almost every $x$.
For the policy $\tilde{\pi}$ as defined in the statement, the first equality in \eqref{exist-Deter-eq1-2} yields $\mathbb{E}_{\pi} [g(X_{t},A_{t})] = \mathbb{E}_{\tilde{\pi}} [g(X_{t},A_{t})]$.
Now, the second equality in \eqref{exist-Deter-eq1-2} leads to $\eta Q^{(t)}_{\pi} h_{\pi_{t}} = \eta Q^{(t)}_{\pi} h_{\gamma} $ and so, we have 
$\eta Q^{(t)}_{\pi}Q_{\pi_{t}}g_{t}^{\pi} = \eta Q^{(t)}_{\pi}Q_{\gamma}g_{t}^{\pi} $ since $\eta Q^{(t)}_{\pi}\otimes\pi_{t} (\Gamma) =\eta Q^{(t)}_{\pi}\otimes\gamma (\Gamma) = 0 $.
Using Fubini's Theorem and  equation \eqref{reward-expression},  we have
\begin{align*} 
\mathbb{E}_{\pi} \Big[\sum_{j\in\NN} g(X_{j},A_{j})\Big] & = \sum_{j=0}^{t-1} \eta Q^{(j)}_{\pi}  g_{\pi_{j}} + \eta Q^{(t)}_{\pi}  g_{\gamma}
+ \eta Q^{(t)}_{\pi}  Q_{\gamma} g_{t}^{\pi} = \mathbb{E}_{\tilde{\pi}} \Big[\sum_{j\in\NN} g(X_{j},A_{j})\Big]
\end{align*}
by definition of $\tilde{\pi}$, giving the first part of the claim.

\bigskip

To prove point $(b)$, we will proceed in the same way as for point $(a)$. We will, therefore, present a less detailed proof than for point $(a)$.
Under the assumption that $\mathsf{M}$ is atomless, the measure $\eta Q^{(t)}_{\pi} \mathbb{I}_{\Delta^c}$ is atomless.
Applying Theorem 9.2 in \cite{balder95}  (the Dirac equivalence Theorem for Young measures) to the $\RR^{2p+1}$-valued measurable function
$(x,a)\mapsto \big(\mathbf{I}_{\KK ^c}(x,a),g(x,a),h(x,a)\big)$, there exists a $\bfrak{X}$-measurable function $\phi: \XX \mapsto \AA$ satisfying
\begin{align}
\eta Q^{(t)}_{\pi} \mathbb{I}_{\Delta^c}  \otimes \pi_{t} (\KK ^c) & = \eta Q^{(t)}_{\pi} \mathbb{I}_{\Delta^c} \otimes \delta_{\phi} (\KK ^c)
\label{A-selector-constraint}
\\
\eta Q^{(t)}_{\pi} \mathbb{I}_{\Delta^c} g_{\pi_{t}} & = \eta Q^{(t)}_{\pi}\mathbb{I}_{\Delta^c}  g_{\phi}, 
\label{exist-Deter-eq1bis}
\\
\eta Q^{(t)}_{\pi}\mathbb{I}_{\Delta^c} Q_{\pi_{t}} g_{t}^{\pi} & = \eta Q^{(t)}_{\pi}\mathbb{I}_{\Delta^c} Q_{\phi} g_{t}^{\pi}, 
\label{exist-Deter-eq2bis}
\end{align}
where $\delta_{\phi}:\XX\mapsto\bcal{P}(\AA)$ is given by $\delta_{\phi(x)}(da)$ for any $x\in\XX$.
Equation \eqref{A-selector-constraint} implies $\phi(x)\in\AA(x)$ for $\eta Q^{(t)}_{\pi}\mathbb{I}_{\Delta^c}$ almost every $x$.
Recalling Assumption \ref{selector-theta}, 
we can conclude that there exists a mapping from $\XX$ to $\AA$ still denoted by $\phi$ satisfying $\phi(x)\in\AA(x)$ for any $x\in\XX$ and also equations \eqref{A-selector-constraint}-\eqref{exist-Deter-eq2bis}.
Starting from equation \eqref{reward-expression} and proceeding now exactly as for the proof of the first item, we easily obtain the result.
\end{proof}

\begin{theorem}
\label{deterministic-Markov-policy}
Suppose $\mathsf{M}$ uniformly $\Delta$-absorbing. Then,
\begin{enumerate}[label=(\alph*)]
\item \label{deterministic-Markov-policy-a} For any policy $\pi\in\mathbf{\Pi}$, there exists a chattering Markov policy $\gamma=\{\gamma_{n}\}_{n\in\NN}$ with  $\gamma_{n}\in\mathbb{C}_{2d+1}$ and 
satisfying $\mu_{\pi}(r)=\mu_{\gamma}(r)$.
\item \label{deterministic-Markov-policy-b} If in addition $\mathsf{M}$ is atomless then for any $\pi\in\mathbf{\Pi}$ there exists a deterministic Markov policy $\Phi$ such that $\mu_{\pi}(r)=\mu_{\Phi}(r)$. 
\end{enumerate}
\end{theorem}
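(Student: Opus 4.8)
The plan is to build the desired Markov policy stage by stage, using Proposition \ref{Theo-invariance} to modify the control at each successive stage, and then to pass to the limit with the help of the uniform absorption hypothesis. As a preliminary reduction, given an arbitrary $\pi\in\mathbf{\Pi}$ I would invoke the identity $\bcal{O}=\bcal{O}_{\mathbb{S}}$ to obtain a randomized stationary (hence Markov randomized) policy $\sigma\in\mathbb{S}$ with $\mu_{\sigma}=\mu_{\pi}$, so in particular $\mu_{\sigma}(r)=\mu_{\pi}(r)$. It therefore suffices to prove the statement for the Markov randomized policy $\sigma$.

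Next comes the stagewise replacement. Set $\sigma^{(0)}=\sigma$ and, for $n\geq 1$, obtain $\sigma^{(n)}$ from $\sigma^{(n-1)}$ by applying Proposition \ref{Theo-invariance}\ref{Theo-invariance-a} at the stage $t=n-1$ to the $d$-dimensional reward $r$, replacing the control of $\sigma^{(n-1)}$ at that stage by a finitely supported kernel $\gamma_{n-1}\in\mathbb{C}_{2d+1}$ (here one uses that $r$ is bounded and vanishes on $\Delta$, so that, for a $\Delta$-absorbing model, all the infinite sums occurring in the proof of Proposition \ref{Theo-invariance} converge absolutely; with $p=d$ this yields order $2d+1$). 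Each $\sigma^{(n)}$ is again Markov randomized, since its controls depend only on the current state, so the procedure can be iterated, and Proposition \ref{Theo-invariance}\ref{Theo-invariance-a} gives $\mu_{\sigma^{(n)}}(r)=\mu_{\sigma^{(n-1)}}(r)$, whence $\mu_{\sigma^{(n)}}(r)=\mu_{\sigma}(r)$ for every $n$. By construction $\sigma^{(n)}$ uses the chattering controls $\gamma_{0},\dots,\gamma_{n-1}$ at stages $0,\dots,n-1$, and I would let $\gamma=\{\gamma_{n}\}_{n\in\NN}$ be the chattering Markov policy whose stage-$n$ control is $\gamma_{n}\in\mathbb{C}_{2d+1}$.

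The final and crucial step is the passage to the limit, where uniform absorption enters. Since $\sigma^{(n)}$ and $\gamma$ carry the same controls at stages $0,\dots,n-1$, the law of $(X_{0},A_{0},\dots,X_{n-1},A_{n-1})$ is identical under $\mathbb{P}_{\sigma^{(n)}}$ and $\mathbb{P}_{\gamma}$, so their truncated rewards agree: $\mathbb{E}_{\sigma^{(n)}}[\sum_{t=0}^{n-1}r(X_{t},A_{t})]=\mathbb{E}_{\gamma}[\sum_{t=0}^{n-1}r(X_{t},A_{t})]$. Because $r$ vanishes on $\Delta$ and the state is absorbed, $|r(X_{t},A_{t})|\leq \|r\|\,\mathbf{I}_{\{T_\Delta>t\}}$, so for any policy $\rho$ the tail is controlled by $\|\mathbb{E}_{\rho}[\sum_{t\geq n}r(X_{t},A_{t})]\|\leq \|r\|\,\varepsilon_{n}$, where $\varepsilon_{n}=\sup_{\rho'\in\mathbf{\Pi}}\sum_{t\geq n}\mathbb{P}_{\rho'}\{T_\Delta>t\}\to 0$ by the uniform $\Delta$-absorbing hypothesis. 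Splitting $\mu_{\sigma^{(n)}}(r)$ and $\mu_{\gamma}(r)$ into their first $n$ stages and their tails and subtracting, the truncations cancel and one gets $\|\mu_{\sigma}(r)-\mu_{\gamma}(r)\|=\|\mu_{\sigma^{(n)}}(r)-\mu_{\gamma}(r)\|\leq 2\|r\|\,\varepsilon_{n}$ for every $n$; letting $n\to\infty$ yields $\mu_{\gamma}(r)=\mu_{\sigma}(r)=\mu_{\pi}(r)$, which proves \ref{deterministic-Markov-policy-a}.

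For \ref{deterministic-Markov-policy-b} I would run exactly the same scheme but invoke Proposition \ref{Theo-invariance}\ref{Theo-invariance-b} at each stage, which under the atomless hypothesis returns a deterministic control $\phi_{n}\in\mathbb{D}$; the limit policy $\Phi=\{\phi_{n}\}_{n\in\NN}$ is then a deterministic Markov policy satisfying $\mu_{\Phi}(r)=\mu_{\pi}(r)$. I expect the main obstacle to be precisely this limit step: a single application of Proposition \ref{Theo-invariance} only modifies one stage, and it is the uniform (rather than mere) absorption that forces the accumulated tail error $\varepsilon_{n}$ to vanish uniformly over policies, so that the infinitely many replacements can be assembled into one Markov policy of unchanged performance. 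A secondary point to verify carefully is that the limiting object is genuinely a chattering (respectively deterministic) Markov policy, which holds because each replaced control is a kernel depending only on the current state and the stage index.
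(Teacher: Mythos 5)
Your overall strategy is exactly the paper's: reduce to a randomized stationary policy via $\bcal{O}=\bcal{O}_{\mathbb{S}}$, replace the control one stage at a time using Proposition \ref{Theo-invariance}, assemble the replaced stage controls into a single Markov policy, and kill the residual discrepancy with the uniform $\Delta$-absorbing hypothesis; your tail estimate $\|\mu_{\sigma}(r)-\mu_{\gamma}(r)\|\leq 2\|r\|\varepsilon_{n}$ is the paper's final inequality, and part \ref{deterministic-Markov-policy-b} is obtained in both cases by substituting item \ref{Theo-invariance-b} for item \ref{Theo-invariance-a}. The one point where you deviate is also where a genuine gap appears: you apply Proposition \ref{Theo-invariance} directly to the $d$-dimensional reward $r$, whereas its hypotheses (and those of Theorem \ref{finitely-supported-young-measure}, on which its proof rests) require a function with values in $\RR_{+}^{p}$, i.e.\ a \emph{nonnegative} function, and $r$ is signed. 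Your parenthetical repair --- that for a $\Delta$-absorbing model ``all the infinite sums occurring in the proof of Proposition \ref{Theo-invariance} converge absolutely'' --- is not true pointwise: $\Delta$-absorption only gives $\mathbb{E}_{\pi}[T_{\Delta}]<\infty$ from the initial distribution $\eta$, so the series defining $g_{t}^{\pi}(x)$ may diverge, or oscillate when $g$ is signed, at states $x$ forming a set that is null for the relevant marginals $\eta Q^{(t)}_{\pi}$ but need not be empty. Nonnegativity is precisely what makes $g_{t}^{\pi}$ and $Qg_{t}^{\pi}$ everywhere well defined with values in $\widebar{\RR}_{+}^{p}$, so that the exceptional set $\Gamma$ can be identified, shown to be null, and truncated away before the Young-measure theorem is invoked. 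The paper's remedy is to run the entire argument with the nonnegative $2d$-dimensional function $g=(r^{+},r^{-})$.

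This repair also undermines your bookkeeping of the order: you obtain $\mathbb{C}_{2d+1}$ by setting $p=d$, which is exactly the step that is not allowed. Once $r$ is replaced by $(r^{+},r^{-})$, Proposition \ref{Theo-invariance} is invoked with $p=2d$ and, as stated, returns kernels of order $2p+1=4d+1$. To recover the order $2d+1$ announced in Theorem \ref{deterministic-Markov-policy} one must open up the proof of Proposition \ref{Theo-invariance} and observe that the stagewise induction only needs the \emph{total} reward to be preserved, not the stage-$t$ reward separately: it then suffices to apply Theorem \ref{finitely-supported-young-measure} to the single $2d$-dimensional nonnegative function $(x,a)\mapsto g(x,a)+Qg_{t}^{\pi}(x,a)\mathbf{I}_{\Gamma^{c}}(x,a)$ rather than to the $4d$-dimensional pair $(g,h)$, which yields order $2d+1$. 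So the arithmetic ``$p=d$ gives $2d+1$'' in your proposal is not justified by the proposition you cite; the correct route to $2d+1$ passes through the positive/negative splitting together with this sharper application of the Young-measure theorem. The remaining steps of your proposal (the reduction to $\mathbb{S}$, the iteration, the identity of the laws over the first $n$ stages, the tail bound, and the deterministic variant for the atomless case) coincide with the paper's proof and are sound, modulo the same nonnegativity caveat in part \ref{deterministic-Markov-policy-b}.
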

\begin{proof} 
Let us write $g$ for the $\RR^{2d}$-valued function defined on $\XX\times\AA$ by $g(x,a)=(r^{+}(x,a),r^{-}(x,a))$ where $r^{+}$ and $r^{-}$ are the positive and negative parts of $r$.
Let us show that for any randomized stationary policy $\pi\in\mathbb{S}$, there exists a chattering Markov policy $\gamma$ satisfying
\begin{align*} 
\mathbb{E}_{\pi} \Big[  \sum_{j=0}^{\infty}   g(X_{j},A_{j})\Big]  = \mathbb{E}_{\gamma} \Big[ \sum_{j=0}^{\infty}g(X_{j},A_{j})\Big].
\end{align*}
This will give the result since $\bcal{O}=\bcal{O}_{\mathbb{S}}$ according to Proposition 3.3(ii) in \cite{fra-tom2024}.
Applying Proposition \ref{Theo-invariance}\ref{Theo-invariance-a} and proceeding by induction, it can be shown that there exists
a sequence $\{\gamma_{k}\}_{k\in\NN}$ in $\mathbb{D}$ satisfying 
\begin{align} 
\label{reward-constant}
\mathbb{E}_{\pi} \Big[\sum_{j\in\NN} g(X_{j},A_{j})\Big] &  = \mathbb{E}_{\gamma^{t}} \Big[\sum_{j\in\NN} g(X_{j},A_{j})\Big]
\end{align}
for any $t\in\NN$ and where the policy $\gamma^{t}=\{\gamma_{k}^{t}\}_{k\in\NN}$ is defined by $\gamma_{k}^{t}=\gamma_{k}$ for $k\leq t$ and $\gamma_{k}^{t}=\pi$ for $k>t$.
Let $\gamma$ be the chattering Markov policy given by $\{\gamma_{k}\}_{k\in\NN}$. Then, we have $\mathbb{E}_{\gamma} [g(X_{s},A_{s})] =\mathbb{E}_{\gamma^{t}} [g(X_{s},A_{s})] $ for any $s\leq t$ and so,
\begin{align} 
\label{finite-time-reward-equal}
\mathbb{E}_{\gamma} \Big[\sum_{j=0}^{t} g(X_{j},A_{j})\Big] & = \mathbb{E}_{\gamma^{t}} \Big[\sum_{j=0}^{t} g(X_{j},A_{j})\Big].
\end{align}
Combining equations \eqref{reward-constant}-\eqref{finite-time-reward-equal}, we have for any $t\in\NN$
\begin{align*} 
\Big| \mathbb{E}_{\pi} \Big[\sum_{j\in\NN} g(X_{j},A_{j})\Big] - \mathbb{E}_{\gamma} \Big[\sum_{j\in\NN} g(X_{j},A_{j})\Big] \Big| 
&= \Big| \mathbb{E}_{\gamma^{t}} \Big[\sum_{j >t} g(X_{j},A_{j})\Big] - \mathbb{E}_{\gamma} \Big[\sum_{j>t} g(X_{j},A_{j})\Big] \Big|\\
&\leq 2 \sup_{\rho\in\mathbf{\Pi}} \mathbb{E}_{\rho} \Big[\sum_{j>t} |g(X_{j},A_{j}) | \Big]\
\end{align*}
Since $\mathsf{M}$ uniformly $\Delta$-absorbing, this shows the first claim.

The second statement can be proved in exactly the same way, except that item \ref{Theo-invariance-b} of Proposition \ref{Theo-invariance} is used instead of \ref{Theo-invariance-a}
to prove this second assertion.
\end{proof}

\section{Sufficiency of chattering and deterministic stationary policies}
\label{Sec-5}
In this last section we will establish the sufficiency of the families of policies $\mathbb{C}_{d+1}$ and $\mathbb{D}$ under an appropriate set of hypotheses.
We will start by showing the following technical result.
\begin{lemma}
\label{finite-compactness}
Suppose $\mathsf{M}$ uniformly $\Delta$-absorbing.
Consider $\{\phi_{k}\}_{k=1,\ldots,N}$ in $\mathbb{D}$ for $N\in\NN^{*}$, a constant $\alpha\in\RR^p$ and
a bounded function $g\in \bcal{L}^{0}_{\RR_{+}^{p}}(\XX\times\AA)$ for $p\in\NN^{*}$ satisfying $g(x,a)=0_{p}$ for $(x,a)\in\Delta\times\AA$.
Let us write 
\begin{align}
\label{Def-Otilde}
\widetilde{\bcal{O}}=\{\mu\in \bcal{O}: \mu(\widetilde{\KK}^{c})=0\}
\end{align}
where $\widetilde{\KK}=\big\{(x,a)\in\XX\times\AA : a\in\{\phi_{1}(x),\ldots,\phi_{N}(x)\}\big\}$.
Then, $\widetilde{\KK}\in\bfrak{X}\otimes\bfrak{B}(\AA)$ and $\widetilde{\bcal{O}}$ is a $ws$-compact face of $\bcal{O}$.
If the set $ \widetilde{\bcal{O}} \inter \bcal{H}(g,\alpha)$ is nonempty then,
$\ext \big( \bcal{O}(g,\alpha) \big) \neq \emptyset.$
\end{lemma}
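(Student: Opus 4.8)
The plan is to realize $\widetilde{\bcal O}$ as the set of occupation measures of the auxiliary model $\widetilde{\mathsf M}=(\XX,\AA,\{\widetilde{\AA}(x)\}_{x\in\XX},Q,\eta,r)$ obtained from $\mathsf M$ by shrinking the admissible actions to the finite sets $\widetilde{\AA}(x)=\{\phi_{1}(x),\dots,\phi_{N}(x)\}$, and then to exploit the finiteness of these action sets throughout. Since $\AA$ is a compact metric space with metric $d$, each map $(x,a)\mapsto d(a,\phi_{k}(x))$ is a bounded Carath\'eodory function, so $\Gr(\phi_{k})=\{(x,a):d(a,\phi_{k}(x))=0\}\in\bfrak X\otimes\bfrak B(\AA)$ and $\widetilde{\KK}=\bigcup_{k=1}^{N}\Gr(\phi_{k})$ is measurable. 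The sets $\widetilde{\AA}(x)$ are nonempty and finite, and $\phi_{1}\in\mathbb D$ is a measurable selector with graph in $\widetilde{\KK}$, so $\widetilde{\mathsf M}$ satisfies the measurability conditions; since its policies form a subclass of $\mathbf{\Pi}$, it inherits the uniform $\Delta$-absorbing property, and one checks $\widetilde{\bcal O}=\{\mu_{\pi}:\pi\in\widetilde{\mathbf{\Pi}}\}$ using $\bcal O=\bcal O_{\mathbb S}$. Convexity of $\widetilde{\bcal O}$ follows from convexity of $\bcal O$ together with linearity of $\mu\mapsto\mu(\widetilde{\KK}^{c})$, and the face property is immediate from nonnegativity: if $\mu=t\mu_{1}+(1-t)\mu_{2}$ with $\mu_{1},\mu_{2}\in\bcal O$, $t\in(0,1)$, and $\mu(\widetilde{\KK}^{c})=0$, then $t\mu_{1}(\widetilde{\KK}^{c})+(1-t)\mu_{2}(\widetilde{\KK}^{c})=0$ with both terms nonnegative forces $\mu_{1},\mu_{2}\in\widetilde{\bcal O}$.

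The main obstacle is the $ws$-compactness of $\widetilde{\bcal O}$. Uniform absorption bounds the total masses $\mu^{\XX}(\XX)=\mathbb E_{\pi}[T_{\Delta}]$ uniformly, and the common reference measure $\rho=\mu_{\kappa}^{\XX}$ associated with the uniform randomization $\kappa(\cdot|x)=\frac{1}{N}\sum_{k=1}^{N}\delta_{\phi_{k}(x)}$ dominates every $\mu^{\XX}$ for $\mu\in\widetilde{\bcal O}$ (because $Q_{\beta}\le N\,Q_{\kappa}$ for every restricted stationary policy with weights $\beta$, whence $\eta Q_{\beta}^{(j)}\le N^{j}\eta Q_{\kappa}^{(j)}$ by monotonicity of composition of positive kernels). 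Combining this domination with the finiteness of the action sets provides what is needed to run a setwise (Young-measure) sequential compactness argument that produces $ws$-limit points, whose membership in $\widetilde{\bcal O}$ is then read off from the characteristic equation via Lemma \ref{Equivalence-occup-measure-1}. This is where the bulk of the technical work lies, and it is the natural place to invoke the compactness machinery for occupation measures of uniformly absorbing models.

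It remains to deduce the existence of an extreme point. The key observation is that, although $g$ is merely measurable, $\mu\mapsto\mu(g)$ is $ws$-continuous on $\widetilde{\bcal O}$: I would construct a bounded Carath\'eodory function $\tilde g$ agreeing with $g$ on $\widetilde{\KK}$ by the interpolation formula
\[
\tilde g(x,a)=\frac{\sum_{k=1}^{N} g(x,\phi_{k}(x))\,\rho_{k}(x,a)}{\sum_{k=1}^{N}\rho_{k}(x,a)},
\qquad
\rho_{k}(x,a)=\prod_{j:\,\phi_{j}(x)\neq\phi_{k}(x)} d(a,\phi_{j}(x)),
\]
with the empty product equal to $1$; the denominator is everywhere positive, one has $\tilde g(x,\phi_{m}(x))=g(x,\phi_{m}(x))$ for every $m$ (coincidences $\phi_{j}(x)=\phi_{m}(x)$ cause no conflict, since then $g(x,\phi_{j}(x))=g(x,\phi_{m}(x))$), and $\tilde g$ is a convex combination of the values $g(x,\phi_{k}(x))$, hence bounded by $\|g\|_{\infty}$. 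Since every $\mu\in\widetilde{\bcal O}$ is concentrated on $\widetilde{\KK}$, we get $\mu(g)=\mu(\tilde g)$, so $\mu\mapsto\mu(g)$ is $ws$-continuous on $\widetilde{\bcal O}$. Consequently $\widetilde{\bcal O}\inter\bcal{H}(g,\alpha)$ is a $ws$-closed, hence $ws$-compact, convex subset of $\widetilde{\bcal O}$, and it is nonempty by hypothesis, so by the Krein--Milman theorem it has an extreme point $\mu^{\ast}$. Finally I would transfer extremality: if $\mu^{\ast}=t\mu_{1}+(1-t)\mu_{2}$ with $\mu_{1},\mu_{2}\in\bcal{O}(g,\alpha)$ and $t\in(0,1)$, then $\mu_{1},\mu_{2}\in\bcal{H}(g,\alpha)$ and, by the same nonnegativity argument as for the face property, $\mu^{\ast}(\widetilde{\KK}^{c})=0$ forces $\mu_{1},\mu_{2}\in\widetilde{\bcal O}\inter\bcal{H}(g,\alpha)$, whence $\mu_{1}=\mu_{2}=\mu^{\ast}$ by extremality of $\mu^{\ast}$ in that set; thus $\mu^{\ast}\in\ext(\bcal{O}(g,\alpha))$ and $\ext(\bcal{O}(g,\alpha))\neq\emptyset$.
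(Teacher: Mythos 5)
Your overall architecture matches the paper's: identify $\widetilde{\bcal O}$ with the occupation measures of the restricted model $\widetilde{\mathsf M}$ with finite action sets $\widetilde{\AA}(x)=\{\phi_{1}(x),\ldots,\phi_{N}(x)\}$, show it is a $ws$-compact face of $\bcal O$, observe that $\mu\mapsto\mu(g)$ is $ws$-continuous on measures carried by $\widetilde{\KK}$, extract an extreme point of $\widetilde{\bcal O}\inter\bcal{H}(g,\alpha)$ by Krein--Milman, and push it into $\ext(\bcal O(g,\alpha))$ via the face property. Your first and last paragraphs are correct; in fact your explicit Carath\'eodory interpolant $\tilde g$ is a nice way of making precise the paper's one-line remark that the restriction of $g$ to $\widetilde{\KK}$ is Carath\'eodory, and your nonnegativity argument for the face property and for transferring extremality is exactly the right (and the paper's implicit) argument.

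The genuine gap is the $ws$-compactness of $\widetilde{\bcal O}$, which is the real content of the lemma and which you explicitly defer (``this is where the bulk of the technical work lies \dots run a setwise (Young-measure) sequential compactness argument'') without executing it or invoking a citable result. Two concrete problems with the sketch as written. First, your domination degrades exponentially: $\eta Q^{(j)}_{\beta}\le N^{j}\eta Q^{(j)}_{\kappa}$ gives $\mu^{\XX}\ll\rho$ for each individual $\mu\in\widetilde{\bcal O}$, but it does not give a uniform bound $\mu^{\XX}\le C\rho$, and any Dunford--Pettis-type argument needs \emph{uniform} absolute continuity of the family $\{\mu^{\XX}:\mu\in\widetilde{\bcal O}\}$ with respect to $\rho$; obtaining that requires splitting each occupation measure into a finite-horizon part (where the constant $N^{n}$ is harmless) plus a tail made uniformly small by uniform absorption --- a step you never perform. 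Second, a \emph{sequential} compactness argument does not by itself yield compactness: over a general measurable state space $\XX$ the $ws$-topology need not be metrizable, so one must either work with nets, use an Eberlein--\v{S}mulian-type equivalence after embedding into a weak $L^{1}(\rho)$ setting, or appeal to a ready-made criterion; and the identification of a limit as an element of $\widetilde{\bcal O}$ (passing the characteristic equation to the limit and then applying Lemma \ref{Equivalence-occup-measure-1}) is also left unverified. The paper closes precisely this step by citing Theorem 4.10 of \cite{fra-tom2024} applied to $\widetilde{\mathsf M}$: it checks condition $(\mathrm{S}_{1})$ there (the multifunction $x\tto\widetilde{\AA}(x)$ has compact values and is weakly measurable, by Corollary 18.14 in \cite{aliprantis06}) and a mild variant of $(\mathrm{S}_{2})$ (continuity of $Q(B|x,\cdot)$ on the finite sets $\widetilde{\AA}(x)$ is automatic), and concludes $ws$-compactness from that theorem. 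To repair your proof, either verify the hypotheses of such an external compactness theorem, as the paper does, or genuinely carry out the uniform-integrability-plus-limit-identification argument you allude to.
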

\begin{proof}
Clearly, $\widetilde{\bcal{O}}$ is a face of $\bcal{O}$. Clearly, $\widetilde{\bcal{O}}$ corresponds to the set of occupation measures of the controlled model
$\widetilde{\mathsf{M}}=(\mathbf{X},\mathbf{A},\{\widetilde{\mathbf{A}}(x)\}_{x\in \mathbf{X}},Q,\eta,r)$ where
$\widetilde{\AA}(x)=\{\phi_{1}(x),\ldots,\phi_{N}(x)\}$.

The model $\mathsf{M}$ being, by hypothesis, uniformly $\Delta$-absorbing, it follows that $\widetilde{\mathsf{M}}$ is also uniformly $\Delta$-absorbing.
Clearly, $\widetilde{\AA}(x)$ is compact and the multifunction from $\XX$ to $\AA$ defined by $x\twoheadrightarrow\widetilde{\AA}(x)$ is weakly measurable by Corollary 18.14 in \cite{aliprantis06}
and so, $\widetilde{\mathsf{M}}$ satisfies Condition $(\mathrm{S}_{1})$ in \cite{fra-tom2024}. Moreover, for any $B\in\bfrak{X}$ and $x\in\XX$ the function $Q(B|x,\cdot)$ is continuous on $\widetilde{\AA}(x)$.
This condition differs slightly from $(\mathrm{S}_{2})$ in \cite{fra-tom2024} but it can be shown that Theorem 4.10 in \cite{fra-tom2024} remains valid under this new condition.
Therefore, the set $\widetilde{\bcal{O}}$ is compact for the $ws$-topology.
Remark that the restriction to $\widetilde{\KK}$ of each component of the function
$g$ is a Carath\'eodory function and so,  the subset (possibly empty) $\widetilde{\bcal{O}} \inter \bcal{H}(g,\alpha)$ of $\widetilde{\bcal{O}}$ 
is compact for the $ws$-topology. 
Now suppose that $ \widetilde{\bcal{O}} \inter \bcal{H}(g,\alpha)$ is nonempty then \mbox{$\ext(\widetilde{\bcal{O}} \inter \bcal{H}(g,\alpha))\neq\emptyset$}. Moreover, it is also a face of $\bcal{O}(g,\alpha)$ since 
$\widetilde{\bcal{O}}$ is a face of $\bcal{O}$ and $\bcal{H}(g,\alpha)$ is convex. 
Therefore, $\ext(\widetilde{\bcal{O}} \inter \bcal{H}(g,\alpha))\subset \ext(\bcal{O}(g,\alpha))$ showing the last part of the claim.
\end{proof}

An interesting consequence of Theorem \ref{Castaing-Valadier} is the following result that states that the sufficiency of chattering stationary policies directly yields the sufficiency of deterministic stationary policies for atomless models.
\begin{proposition}
\label{Consequence-chattering}
Assume that $\mathsf{M}$ is uniformly $\Delta$-absorbing and atomless. 
For every $\gamma\in\mathbb{C}$, there exists $\phi\in\mathbb{D}$ satisfying $\mu_{\gamma}(r)=\mu_{\phi}(r)$.
In particular, if $\mathcal{R}(\mathbb{C}_{p})=\mathcal{R}(\mathbf{\Pi})$ then $\mathcal{R}(\mathbb{D})=\mathcal{R}(\mathbf{\Pi})$.
\end{proposition}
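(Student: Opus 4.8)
The plan is to reduce the statement to the two structural results on constrained occupation measures already established: the $ws$-compactness of a finitely generated face (Lemma \ref{finite-compactness}) and the identification of the extreme points of a constrained set as deterministic occupation measures (Theorem \ref{Castaing-Valadier}). Fix $\gamma\in\mathbb{C}$; it clearly suffices to produce $\phi\in\mathbb{D}$ with $\mu_{\phi}(r)=\mu_{\gamma}(r)$. Since $r$ is $\RR^{d}$-valued whereas both cited results are stated for nonnegative integrands, I would work throughout with $g=(r^{+},r^{-})\in\bcal{L}^{0}_{\RR_{+}^{2d}}(\XX\times\AA)$, which is bounded and vanishes on $\Delta\times\AA$ because $r$ does. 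The key reduction is that any measure $\mu$ with $\mu(g)=\mu_{\gamma}(g)$ automatically satisfies $\mu(r)=\mu(r^{+})-\mu(r^{-})=\mu_{\gamma}(r^{+})-\mu_{\gamma}(r^{-})=\mu_{\gamma}(r)$, so controlling the constraint $g$ controls $r$.

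Next I would attach a restricted model to $\gamma$. As $\gamma\in\mathbb{C}$ is finitely supported, write $\gamma(da|x)=\sum_{i=1}^{p}\beta_{i}(x)\delta_{\phi_{i}(x)}(da)$ for measurable selectors $\phi_{1},\dots,\phi_{p}$ and $\beta\in\bcal{L}^{0}_{\bscr{S}_{p}}(\XX)$. Since $\gamma(\AA(x)|x)=1$ forces $\beta_{i}(x)=0$ whenever $\phi_{i}(x)\notin\AA(x)$, redefining each $\phi_{i}$ to equal the selector $\theta$ of Assumption \ref{selector-theta} on the measurable set $\{\beta_{i}=0\}$ leaves $\gamma$ unchanged and makes $\phi_{1},\dots,\phi_{p}\in\mathbb{D}$. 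I would then apply Lemma \ref{finite-compactness} to these selectors and to $g$, with $\widetilde{\AA}(x)=\{\phi_{1}(x),\dots,\phi_{p}(x)\}$ and $\widetilde{\bcal{O}}$ the associated $ws$-compact face of $\bcal{O}$. Because $\gamma(\cdot|x)$ is supported on $\{\phi_{1}(x),\dots,\phi_{p}(x)\}$ and $\mu_{\gamma}=\mu_{\gamma}^{\XX}\otimes\gamma$ by \eqref{Def-occup-meas}, we have $\mu_{\gamma}(\widetilde{\KK}^{c})=0$, hence $\mu_{\gamma}\in\widetilde{\bcal{O}}$. Setting $\alpha=\mu_{\gamma}(g)$, the set $\widetilde{\bcal{O}}\inter\bcal{H}(g,\alpha)$ contains $\mu_{\gamma}$ and is thus nonempty, so Lemma \ref{finite-compactness} yields $\ext\big(\bcal{O}(g,\alpha)\big)\neq\emptyset$.

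Finally I would invoke Theorem \ref{Castaing-Valadier}: since $\mathsf{M}$ is $\Delta$-absorbing and atomless, $\ext\big(\bcal{O}(g,\alpha)\big)=\bcal{O}_{\mathbb{D}}(g,\alpha)$, which is therefore nonempty. Picking any $\mu_{\phi}$ in it gives $\phi\in\mathbb{D}$ with $\mu_{\phi}(g)=\alpha=\mu_{\gamma}(g)$, and the reduction of the first paragraph then gives $\mu_{\phi}(r)=\mu_{\gamma}(r)$, proving the first assertion. For the second, $\mathbb{D}\subset\mathbf{\Pi}$ gives $\mathcal{R}(\mathbb{D})\subset\mathcal{R}(\mathbf{\Pi})$ at once; conversely, if $\mathcal{R}(\mathbb{C}_{p})=\mathcal{R}(\mathbf{\Pi})$ and $v\in\mathcal{R}(\mathbf{\Pi})$, choose $\gamma\in\mathbb{C}_{p}\subset\mathbb{C}$ with $\mathcal{R}(\gamma)=\mu_{\gamma}(r)=v$, and the first assertion supplies $\phi\in\mathbb{D}$ with $\mathcal{R}(\phi)=\mu_{\phi}(r)=v$, whence $v\in\mathcal{R}(\mathbb{D})$ and equality follows.

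The main obstacle is that the constrained set $\bcal{O}(g,\alpha)$ need not be compact, so the existence of an extreme point is not granted a priori; the device of descending to the finitely generated face $\widetilde{\bcal{O}}$ is precisely what supplies $ws$-compactness (this is where uniform $\Delta$-absorbingness is used), while keeping $\mu_{\gamma}$ inside it. Only after securing a nonempty extreme set does atomlessness, through Theorem \ref{Castaing-Valadier}, force that extreme point to be generated by a deterministic stationary policy. A secondary point to handle with care is the nonnegativity hypothesis of the two cited results, which is why the splitting $g=(r^{+},r^{-})$ is used in place of $r$ itself.
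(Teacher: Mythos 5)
Your proof is correct and follows essentially the same route as the paper's own: pass to the finitely generated $ws$-compact face $\widetilde{\bcal{O}}$ attached to the support selectors of $\gamma$ via Lemma \ref{finite-compactness} to obtain a nonempty extreme set, then apply Theorem \ref{Castaing-Valadier} to identify an extreme point with some $\mu_{\phi}$, $\phi\in\mathbb{D}$. Your only deviations are minor refinements of the same argument: the splitting $g=(r^{+},r^{-})$ to honor the nonnegativity hypothesis of Lemma \ref{finite-compactness} (the paper applies that lemma to $r$ directly, since the hypothesis is inessential to its proof), the explicit correction of the selectors $\phi_{i}$ on $\{\beta_{i}=0\}$ using $\theta$ so that they lie in $\mathbb{D}$ (which the paper asserts without detail), and the explicit derivation of the second assertion.
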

\begin{proof}
For $\gamma\in\mathbb{C}_{p}$ for $p\in\NN^{*}$ there exists $\{\gamma_{i}\}_{i\in\NN^*_{p}}\subset \mathbb{D}$
such that $\gamma(\cdot|x)$ is supported on $\{\gamma_{i}(x)\}_{i\in\NN^*_{p}}\subset \AA(x)$.
With $\widetilde{\KK}=\big\{(x,a)\in\XX\times\AA : a\in\{\gamma_{i}(x): i\in\NN^*_{p}\}\}\big\}$, write $\widetilde{\bcal{O}}$ as defined in equation \eqref{Def-Otilde} .
Clearly, $\mu_{\gamma}\in\widetilde{\bcal{O}}$ and so, $\widetilde{\bcal{O}} \inter \bcal{H}(r,\mu_{\pi}(r)) \neq\emptyset$. 
From Lemma \ref{finite-compactness}, we have $\ext\big( \bcal{O}(r,\mu_{\pi}(r)) \big)\neq\emptyset$.
We can now apply Theorem \ref{Castaing-Valadier} to get the existence of a deterministic stationary policy, $\phi\in\mathbb{D}$ satisfying
$\mu_{\phi}(r)=\mu_{\pi}(r)$ showing the result.
\end{proof}

The following theorem represents an important step in the proof of the sufficiency of the sets of policies $\mathbb{C}_{d+1}$ and $\mathbb{D}$.
It shows under appropriate assumptions that $\mathcal{R}(\mathbb{C}_{d+1})$ and $\mathcal{R}(\mathbb{D})$ are convex and dense in $\mathcal{R}(\mathbf{\Pi})$ and all these sets have the same relative interior.
\begin{theorem}
\label{Convexity-Denseness}
Assume that the Markov controlled model $\mathsf{M}$ is uniformly $\Delta$-absorbing.
Then the following properties hold. 
\begin{enumerate}[label=(\alph*)]
\item \label{Convexity-Denseness-a} $\mathcal{R}(\mathbb{C}_{d+1})$ is convex and dense in $\mathcal{R}(\mathbf{\Pi})$ and both have the same relative interior.
\item \label{Convexity-Denseness-b} If in addition $\mathsf{M}$ is atomless, $\mathcal{R}(\mathbb{D})$ is convex and dense in $\mathcal{R}(\mathbf{\Pi})$ and both have the same relative interior.
\end{enumerate}
\end{theorem}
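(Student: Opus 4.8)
The plan is to reduce everything to the following observation, which combines Lemma~\ref{finite-compactness} with Theorem~\ref{Dubins} and which I will call the \emph{finite-selector principle}: \emph{if $\mu\in\bcal{O}$ is an occupation measure whose support lies in $\widetilde{\KK}=\{(x,a):a\in\{\phi_1(x),\dots,\phi_N(x)\}\}$ for finitely many $\phi_1,\dots,\phi_N\in\mathbb{D}$, then $\mu(r)\in\mathcal{R}(\mathbb{C}_{d+1})$.} Indeed, such a $\mu$ lies in the set $\widetilde{\bcal{O}}$ of \eqref{Def-Otilde} and satisfies $\mu\in\widetilde{\bcal{O}}\inter\bcal{H}(r,\mu(r))\neq\emptyset$ (the components of $r$ restrict to Carath\'eodory functions on $\widetilde{\KK}$, exactly as in the proof of Proposition~\ref{Consequence-chattering}); hence Lemma~\ref{finite-compactness} gives $\ext(\bcal{O}(r,\mu(r)))\neq\emptyset$, and Theorem~\ref{Dubins} applied with $g=r$ (so $p=d$) places any such extreme point in $\bcal{O}_{\mathbb{C}_{d+1}}(r,\mu(r))$, whose $r$-integral equals $\mu(r)$. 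I also record that $\mathcal{R}(\mathbf{\Pi})=\{\mu(r):\mu\in\bcal{O}\}$ is the image of the convex set $\bcal{O}$ under the linear map $\mu\mapsto\mu(r)$, hence convex.

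For the convexity of $\mathcal{R}(\mathbb{C}_{d+1})$ in part~\ref{Convexity-Denseness-a}, take $\beta_i=\mu_{\gamma_i}(r)$ with $\gamma_i\in\mathbb{C}_{d+1}$ ($i=1,2$) and $t\in[0,1]$. Each $\gamma_i$ is supported on a finite family $S_i\subset\mathbb{D}$ of at most $d+1$ selectors, so $\mu:=t\mu_{\gamma_1}+(1-t)\mu_{\gamma_2}$, which lies in $\bcal{O}$ by convexity of $\bcal{O}$, is supported on the finite selector set $S_1\cup S_2$. By the finite-selector principle, $\mu(r)=t\beta_1+(1-t)\beta_2\in\mathcal{R}(\mathbb{C}_{d+1})$, proving convexity. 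The same argument applied to an arbitrary $\gamma\in\mathbb{C}$ shows $\mathcal{R}(\mathbb{C})=\mathcal{R}(\mathbb{C}_{d+1})$, which I use in part~(b).

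For density, fix $\alpha\in\mathcal{R}(\mathbf{\Pi})$. By Theorem~\ref{deterministic-Markov-policy}\ref{deterministic-Markov-policy-a} there is a chattering Markov policy $\gamma=\{\gamma_n\}_{n\in\NN}$ with $\gamma_n\in\mathbb{C}_{2d+1}$ and $\mu_\gamma(r)=\alpha$. For $T\in\NN$ define the Markov policy $\gamma^{(T)}$ that applies $\gamma_n$ for $n<T$ and the deterministic selector $\theta$ of Assumption~\ref{selector-theta} for $n\ge T$; it uses only the finitely many selectors appearing in $\gamma_0,\dots,\gamma_{T-1}$ together with $\theta$, so $\mu_{\gamma^{(T)}}(r)\in\mathcal{R}(\mathbb{C}_{d+1})$ by the finite-selector principle. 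Since $\gamma^{(T)}$ and $\gamma$ agree on stages $0,\dots,T-1$ and $r$ vanishes on $\Delta\times\AA$,
\begin{align*}
\big|\mu_{\gamma^{(T)}}(r)-\alpha\big|\le 2\,\|r\|\,\sup_{\rho\in\mathbf{\Pi}}\sum_{t\ge T}\mathbb{P}_{\rho}\{T_\Delta>t\},
\end{align*}
which tends to $0$ as $T\to\infty$ because $\mathsf{M}$ is uniformly $\Delta$-absorbing. Hence $\mathcal{R}(\mathbf{\Pi})\subseteq\overline{\mathcal{R}(\mathbb{C}_{d+1})}$, so $\mathcal{R}(\mathbb{C}_{d+1})$ is dense in $\mathcal{R}(\mathbf{\Pi})$ and the two convex sets have the same closure; since a convex subset of $\RR^d$ has the same relative interior as its closure, it follows that $\ri(\mathcal{R}(\mathbb{C}_{d+1}))=\ri(\mathcal{R}(\mathbf{\Pi}))$, completing part~\ref{Convexity-Denseness-a}.

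Part~\ref{Convexity-Denseness-b} follows at once from part~(a): when $\mathsf{M}$ is in addition atomless, Proposition~\ref{Consequence-chattering} gives $\mathcal{R}(\mathbb{C})\subseteq\mathcal{R}(\mathbb{D})$, while the finite-selector principle applied to a single selector gives $\mathcal{R}(\mathbb{D})\subseteq\mathcal{R}(\mathbb{C}_{d+1})=\mathcal{R}(\mathbb{C})$; hence $\mathcal{R}(\mathbb{D})=\mathcal{R}(\mathbb{C}_{d+1})$ and every assertion of part~(a) transfers verbatim. (Alternatively, one repeats the density argument using Theorem~\ref{Castaing-Valadier} in place of Theorem~\ref{Dubins}, so that the extreme points land directly in $\bcal{O}_{\mathbb{D}}$.) The main obstacle is the density step: the available sufficiency results produce time-\emph{inhomogeneous} (chattering Markov) policies, and the crux is to convert them into stationary ones. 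This is achieved by truncating to finitely many selectors — legitimate precisely because uniform absorption makes the discarded tail uniformly negligible — and then invoking the extreme-point machinery (Lemma~\ref{finite-compactness} together with Theorem~\ref{Dubins}), which is what ``stationarizes'' a finite-selector occupation measure into a chattering stationary policy of order $d+1$.
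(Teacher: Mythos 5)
Your proof is correct, and part~\ref{Convexity-Denseness-a} is essentially the paper's own argument: the same convex-combination-plus-extreme-point step (Lemma~\ref{finite-compactness} followed by Theorem~\ref{Dubins} applied with $g=r$) for convexity, the same truncation of the chattering Markov policy supplied by Theorem~\ref{deterministic-Markov-policy}\ref{deterministic-Markov-policy-a} for density (the paper freezes the tail at the kernel $\pi_N$ where you switch to the selector $\theta$ --- immaterial, since both leave only finitely many selectors in play and both tails are controlled by uniform absorption), and the same appeal to Theorem 6.3 in \cite{rockafellar70} for the identification of relative interiors. Where you genuinely diverge is part~\ref{Convexity-Denseness-b}. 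The paper re-runs all of part (a) with the atomless machinery substituted in: Theorem~\ref{Castaing-Valadier} in place of Theorem~\ref{Dubins}, and Theorem~\ref{deterministic-Markov-policy}\ref{deterministic-Markov-policy-b} in place of \ref{deterministic-Markov-policy-a}. You instead note that Proposition~\ref{Consequence-chattering} gives $\mathcal{R}(\mathbb{C})\subseteq\mathcal{R}(\mathbb{D})$ in the atomless case, while $\mathcal{R}(\mathbb{D})\subseteq\mathcal{R}(\mathbb{C}_{d+1})=\mathcal{R}(\mathbb{C})$ follows from your finite-selector principle (or simply from $\mathbb{D}\subseteq\mathbb{C}_{d+1}$), so that $\mathcal{R}(\mathbb{D})=\mathcal{R}(\mathbb{C}_{d+1})$ as sets and every assertion of part (a) transfers. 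This is non-circular, since Proposition~\ref{Consequence-chattering} precedes the theorem in the paper and rests only on Lemma~\ref{finite-compactness} and Theorem~\ref{Castaing-Valadier}, and it is more economical: it makes (b) a formal corollary of (a) plus the chattering-to-deterministic reduction, and it dispenses with Theorem~\ref{deterministic-Markov-policy}\ref{deterministic-Markov-policy-b} entirely. What the paper's longer route buys is structural parallelism --- a self-contained deterministic-policy version of each step of (a) --- whereas your route is, in effect, the first proof of Theorem~\ref{Main-theorem-bb} transplanted one theorem earlier; the paper keeps both mechanisms visible precisely because it wants to present the two proofs of Theorem~\ref{Main-theorem-bb} side by side.
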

\begin{proof}
Let us show \ref{Convexity-Denseness-a}.
For the first point, consider $\pi_{1}$ and $\pi_{2}$ two chattering stationary policies in $\mathbb{C}_{d+1}$ and $\alpha\in]0,1[$.
Then, there exists two finite family of functions $\{\phi_{1,i}\}_{i\in\NN^*_{d+1}}\subset \mathbb{D}$
and $\{\phi_{2,j}\}_{j\in\NN^*_{d+1}}\subset \mathbb{D}$ such that 
$\pi_{1}(\cdot|x)$ is supported on $\{\phi_{1,i}(x)\}_{i\in\NN^*_{d+1}}\subset \AA(x)$
and $\pi_{2}(\cdot|x)$ is supported on $\{\phi_{2,j}(x)\}_{j\in\NN^*_{d+1}}\subset \AA(x)$.
Let us write $$\widetilde{\KK}=\big\{(x,a)\in\XX\times\AA : a\in\{\phi_{1,i}(x): i\in\NN^*_{d+1}\}\union\{\phi_{2,j}(x): j\in\NN^*_{d+1}\}\big\}$$
and $\widetilde{\bcal{O}}$ as defined in equation \eqref{Def-Otilde}.
Clearly, $\mu_{\pi_{1}}$ and $\mu_{\pi_{2}}$ are in the convex set $\widetilde{\bcal{O}}$ and so, there exists a policy $\pi\in\mathbb{S}$ satisfying $\mu_{\pi}=\alpha\mu_{\pi_{1}}+(1-\alpha)\mu_{\pi_{2}}$ and $\mu_{\pi}(\widetilde{\KK}^c)=0$. Therefore, $\widetilde{\bcal{O}} \inter \bcal{H}(r,\mu_{\pi}(r)) \neq\emptyset$. 
By applying Lemma \ref{finite-compactness}, we have $\ext\big( \bcal{O}(r,\mu_{\pi}(r)) \big)\neq\emptyset$.
We can now apply Theorem \ref{Dubins} to get the existence of a  chattering stationary policy of order $d+1$, $\gamma\in\mathbb{C}_{d+1}$ satisfying
$\mu_{\gamma}(r)=\mu_{\pi}(r)=\alpha\mu_{\pi_{1}}(r)+(1-\alpha)\mu_{\pi_{2}}(r)$ giving the first part of the result.

For the second point, let us consider $\mu_{\pi}(r)$ for $\pi\in\mathbf{\Pi}$. From Theorem \ref{deterministic-Markov-policy}\ref{deterministic-Markov-policy-a}, there is no loss of generality to assume that $\pi$ is given
by a chattering Markov policy denoted by $\{\pi_{n}\}_{n\in\NN}$.
Therefore, for any $\varepsilon >0$ there exists $N\in\NN$ such that
$\ds \sup_{\gamma\in\mathbf{\Pi}} \sum_{k=N+1}^{\infty} \mathbb{E}_{\gamma} \big[| r(X_{k},A_{k})|\big]\leq \varepsilon /2$ since $\mathsf{M}$ is uniformly $\Delta$-absorbing.
Let us define the chattering Markov policy $\rho=\{\rho_{k}\}_{k\in\NN}$ by 
$\rho_{k}=\pi_{k}$ for $k\leq N$ and $\rho_{k}=\pi_{N}$ for $k\geq N+1$. Then we have
$$|\mu_{\pi}(r)-\mu_{\rho}(r)|\leq \varepsilon.$$
For any $n\in\NN$, there exists a finite family of functions $\{\phi_{n,i}\}_{i\in\NN^*_{2d+1}}\subset \mathbb{D}$ such that 
$\pi_{n}(\cdot|x)$ is supported on $\{\phi_{n,i}(x)\}_{i\in\NN^*_{2d+1}}\subset \AA(x)$.
With $\widetilde{\KK}=\Big\{(x,a)\in\XX\times\AA : a\in\union_{n=1}^N \big\{\phi_{n,i}(x): i\in\NN^*_{2d+1} \big\} \Big\}$ and  $\widetilde{\bcal{O}}$ as defined in equation \eqref{Def-Otilde},
we have $\widetilde{\bcal{O}} \inter \bcal{H}(r,\mu_{\rho}(r)) \neq\emptyset$ and so,  $\ext\big( \bcal{O}(r,\mu_{\rho}(r)) \big)\neq\emptyset$ by Lemma \ref{finite-compactness}.
Applying Theorem \ref{Dubins}, there exists a chattering stationary policy $\varphi\in\mathbb{C}_{d+1}$ satisfying $\mu_{\rho}(r)=\mu_{\varphi}(r)$ and so,
$|\mu_{\pi}(r)-\mu_{\varphi}(r)|\leq \varepsilon$
implying $\mathcal{R}(\mathbb{C}_{d+1})$ is dense in $\mathcal{R}(\mathbf{\Pi})$.

Finally, the previous result implies that $\widebar{\mathcal{R}(\mathbf{\Pi})}$ and $\widebar{\mathcal{R}(\mathbb{C}_{d+1})}$ have the same relative interior. Since $\mathcal{R}(\mathbb{C}_{d+1}))$ is convex,
it follows by Theorem 6.3 in \cite{rockafellar70}  that $\mathcal{R}(\mathbf{\Pi})$ and $\mathcal{R}(\mathbb{C}_{d+1})$ also have the same relative interior
showing the first item.

\bigskip

To prove item \ref{Convexity-Denseness-b}, we will proceed in the same way as for item \ref{Convexity-Denseness-a}. We will therefore present a less detailed proof than for \ref{Convexity-Denseness-a}.
Nevertheless, we feel it is important to provide these elements so as not to leave a key result unproven.

For the first point, consider $\phi_{1}$ and $\phi_{2}$ two deterministic stationary policies in $\mathbb{D}$ and $\alpha\in]0,1[$.
Write $\widetilde{\KK}=\big\{(x,a)\in\XX\times\AA : a\in\{\phi_{1}(x),\phi_{2}(x)\}\big\}$ and consider $\widetilde{\bcal{O}}$ as defined in equation \eqref{Def-Otilde}.
There exists a policy $\pi\in\mathbb{S}$ satisfying $\mu_{\pi}=\alpha\mu_{\phi_{1}}+(1-\alpha)\mu_{\phi_{2}}$ and
$\mu_{\pi}(\widetilde{\KK}^c)=0$.
Consequently, $\widetilde{\bcal{O}} \inter \bcal{H}(r,\mu_{\pi}(r)) \neq\emptyset$. By applying Lemma \ref{finite-compactness}, we have $\ext\big( \bcal{O}(r,\mu_{\pi}(r)) \big)\neq\emptyset$.
We can now apply Theorem \ref{Castaing-Valadier} to get the existence of a 
deterministic stationary policy $\phi\in\mathbb{D}$ satisfying $\mu_{\phi}(r)=\mu_{\pi}(r)=\alpha\mu_{\phi_{1}}(r)+(1-\alpha)\mu_{\phi_{2}}(r)$ giving the first part of the result.

Regarding the second claim, let us consider $\mu_{\pi}(r)$ for $\pi\in\mathbf{\Pi}$. From Theorem \ref{deterministic-Markov-policy}\ref{deterministic-Markov-policy-b}, there is no loss of generality to assume that $\pi$ is given by a deterministic Markov policy denoted by $\{\phi_{n}\}_{n\in\NN}$. Since $\mathsf{M}$ is uniformly $\Delta$-absorbing, for any $\varepsilon >0$ there exists $N\in\NN$ and a policy $\Phi^N=\{\Phi^N_{k}\}_{k\in\NN}$ given by 
$\Phi^N_{k}=\phi_{k}$ for $k\leq N$ and $\Phi^N_{k}=\phi_{N}$ for $k\geq N+1$ satisfying $|\mu_{\pi}(r)-\mu_{\Phi^N}(r)|\leq \varepsilon$.
With $\widetilde{\KK}=\big\{(x,a)\in\XX\times\AA : a\in\{\phi_{1}(x),\ldots,\phi_{N}(x)\}\big\}$ and  $\widetilde{\bcal{O}}$ as defined in equation \eqref{Def-Otilde}, we have
$\widetilde{\bcal{O}} \inter \bcal{H}(r,\mu_{\Phi}(r)) \neq\emptyset$ and so,  $\ext\big( \bcal{O}(r,\mu_{\Phi}(r)) \big)\neq\emptyset$ by Lemma \ref{finite-compactness}.
Applying Theorem \ref{Castaing-Valadier}, there exists a  deterministic stationary policy $\varphi\in\mathbb{D}$ satisfying $\mu_{\Phi}(r)=\mu_{\varphi}(r)$ implying
$|\mu_{\pi}(r)-\mu_{\varphi}(r)|\leq \varepsilon$. This shows that $\mathcal{R}(\mathbb{D})$ is dense in $\mathcal{R}(\mathbf{\Pi})$. The last part of the claim follows the same arguments as for item \ref{Convexity-Denseness-a} 
and the proof is complete.
\end{proof}

We are now going to generalize a striking result from \cite{piunovskiy19}. We will establish that Theorem 6.3 in \cite{piunovskiy19} is still valid without assuming that the underlying model is atomless and for a general measurable state space.
\begin{theorem}
\label{Alexey-Theo}
Assume that the model $\mathsf{M}$ is uniformly $\Delta$-absorbing.
For $\gamma\in\mathbf{\Pi}$, there exists a uniformly $\Delta$-absorbing model $\mathsf{M}^{*}=(\mathbf{X},\mathbf{A},\{\mathbf{A}^{*}(x)\}_{x\in \mathbf{X}},Q,\eta,r)$ satisfying the measurability conditions
with $\AA^{*}(x)$ a finite or countable subset of $\AA(x)$ and letting $\bcal{O}^{*}$ be the set of occupation measures of $\mathsf{M}^{*}$ we have
\begin{enumerate}[label=\arabic*)]
\item  $\mu_{\gamma}(r)=\mu^{*}(r)$ for some $\mu^{*}\in\bcal{O}^{*}$ and $\mu\ll \mu^{*}$ for any $\mu\in \bcal{O}^{*}$,
\item If $\ds \sup_{\mu\in\bcal{O}^{*}} G(\mu(r)) = G(\mu^{*}(r))$ for a linear functional $G$ on $\RR^{d}$, then
$G(\mu(r)) = G(\mu^{*}(r))$ for any $\mu\in\bcal{O}^{*}$.
\end{enumerate}
\end{theorem}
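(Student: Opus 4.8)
The plan is to realize $\mathsf{M}^{*}$ as an action-restricted sub-model of $\mathsf{M}$, tailored so that the prescribed reward $p:=\mu_{\gamma}(r)$ lies in the relative interior of $r(\bcal{O}^{*})$ while still being attained by a measure that dominates all of $\bcal{O}^{*}$. The two conclusions then decouple. Once $\mu^{*}(r)=p\in\ri\big(r(\bcal{O}^{*})\big)$, item~2) is the elementary convex-analysis fact that a linear functional $G$ attaining its supremum over a convex set $D\subseteq\RR^{d}$ at a relative interior point $x_{0}$ is constant on $D$: for $y\in D$ the point $x_{0}+\varepsilon(x_{0}-y)$ lies in $D$ for small $\varepsilon>0$, so $G(x_{0})\ge G(x_{0})+\varepsilon\big(G(x_{0})-G(y)\big)$, forcing $G(y)=G(x_{0})$. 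Thus the whole problem is to build $\mathsf{M}^{*}$ with $p$ relatively interior, exactly attained, and with a dominating occupation measure.

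First I would reduce to a stationary description: using $\bcal{O}=\bcal{O}_{\mathbb{S}}$ pick $\sigma\in\mathbb{S}$ with $\mu_{\sigma}=\mu_{\gamma}$, so $\mu_{\sigma}(r)=p$, and set $C=\mathcal{R}(\mathbf{\Pi})=r(\bcal{O})\subseteq\RR^{d}$, which is convex and bounded by Proposition~\ref{Linearly-bounded-closed} and the boundedness of $r$. Let $F$ be the minimal face of $C$ with $p\in\ri(F)$, of dimension $e\le d$; if $e=d$ there is nothing to cut, otherwise $\aff(F)$ is the intersection of $k=d-e$ supporting hyperplanes of $C$ at $p$, given by functionals $G_{1},\dots,G_{k}$ with $G_{j}(p)=\max_{C}G_{j}=:v_{j}^{*}$. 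The heart of the construction is to pin $r(\bcal{O}^{*})$ to $\aff(F)$ by restricting to \emph{conserving} actions. For each $j$ I would view $w_{j}:=G_{j}\circ r$ (bounded, vanishing on $\Delta$) as a scalar total-reward criterion and restrict the feasible actions to those attaining the maximum in the corresponding optimality equation; the occupation measures of the resulting sub-model are exactly the $w_{j}$-optimal elements, i.e.\ those $\mu$ with $G_{j}(\mu(r))=v_{j}^{*}$. Iterating over $j=1,\dots,k$ yields a uniformly $\Delta$-absorbing sub-model whose reward set is contained in $\bigcap_{j}\{G_{j}=v_{j}^{*}\}=\aff(F)$ and still contains $p$. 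To make each reduction legitimate and measurable I would first pass to a model with compact action images (as in the reduction preceding Lemma~\ref{finite-compactness}, via Corollary~18.14 in \cite{aliprantis06} and the $ws$-compactness/continuity framework of \cite{fra-tom2024}), which guarantees attainment of the maxima and a measurable selection of the conserving multifunction.

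At the final stage I would extract a \emph{countable} core $\AA^{*}(x)$ of conserving actions satisfying two requirements. On the one hand it must contain the supports of finitely many chattering policies $\eta_{0},\dots,\eta_{e}\in\mathbb{C}_{d+1}$ whose rewards $c_{i}\in F$ are extreme points of $F$ affinely spanning $\aff(F)$ with $p\in\ri\big(\conv\{c_{0},\dots,c_{e}\}\big)$; these exist because $F=\conv(\ext F)$ and, applying Theorem~\ref{Dubins} to the constrained set $\bcal{O}(Lr,\beta)$ cutting out $F$, every extreme point of $F$ is realized exactly by a chattering stationary policy (in the top-dimensional case $p\in\ri(C)$ one uses instead Theorem~\ref{Convexity-Denseness}\ref{Convexity-Denseness-a}, which makes relative-interior rewards exactly attainable by $\mathbb{C}_{d+1}$). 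Since every action of $\AA^{*}(x)$ is conserving, $r(\bcal{O}^{*})\subseteq\aff(F)$; since $\AA^{*}(x)$ contains the supports of the $\eta_{i}$, we have $\conv\{c_{i}\}\subseteq r(\bcal{O}^{*})$ with $p$ in its relative interior and with full affine span $\aff(F)$. Combining the two inclusions gives $\aff\big(r(\bcal{O}^{*})\big)=\aff(F)$, hence $p\in\ri\big(r(\bcal{O}^{*})\big)$, and $p$ is attained exactly by $\sum_{i}\lambda_{i}\mu_{\eta_{i}}\in\bcal{O}^{*}$ for the positive weights with $p=\sum_{i}\lambda_{i}c_{i}$. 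Measurability conditions \ref{K-measurability}--\ref{selector-theta} and uniform absorption of $\mathsf{M}^{*}$ are inherited as in Lemma~\ref{finite-compactness}.

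It remains to upgrade the attaining measure to a dominating one without moving its reward. As $\AA^{*}(x)=\{a_{k}(x)\}_{k}$ is countable with measurable selectors, the reference policy $\sigma^{\mathrm{rich}}(\cdot|x)=\sum_{k}2^{-k}\delta_{a_{k}(x)}$ lies in $\mathbb{S}$ for $\mathsf{M}^{*}$ and satisfies $\sigma(\cdot|x)\ll\sigma^{\mathrm{rich}}(\cdot|x)$ for every stationary $\sigma$ of $\mathsf{M}^{*}$; a short induction gives $\eta Q_{\sigma}^{t}\ll\eta Q_{\sigma^{\mathrm{rich}}}^{t}$ for all $t$, hence $\mu_{\sigma}^{\XX}\ll\mu_{\sigma^{\mathrm{rich}}}^{\XX}$ and, after disintegration, $\mu\ll\mu_{\sigma^{\mathrm{rich}}}$ for every $\mu\in\bcal{O}^{*}=\bcal{O}^{*}_{\mathbb{S}}$. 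Finally, because $p\in\ri\big(r(\bcal{O}^{*})\big)$, the segment from $q:=\mu_{\sigma^{\mathrm{rich}}}(r)$ through $p$ extends beyond $p$ inside $r(\bcal{O}^{*})$, so $p=\lambda q+(1-\lambda)\mu_{\sigma'}(r)$ for some $\lambda\in(0,1)$ and $\sigma'$ in $\mathsf{M}^{*}$; then $\mu^{*}:=\lambda\mu_{\sigma^{\mathrm{rich}}}+(1-\lambda)\mu_{\sigma'}\in\bcal{O}^{*}$ has $\mu^{*}(r)=p$ and $\mu_{\sigma^{\mathrm{rich}}}\ll\mu^{*}$, whence $\mu\ll\mu^{*}$ for all $\mu\in\bcal{O}^{*}$, proving item~1); item~2) follows from the relative-interior property established above. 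The main obstacle I anticipate is precisely the conserving-action reduction: the three demands — attaining $p$ exactly, keeping it relatively interior, and dominating $\bcal{O}^{*}$ — force $r(\bcal{O}^{*})$ to be pinned to $\aff(F)$, which cannot be obtained merely by adding the supports of a few optimal policies, since switching among their actions generally leaves $F$. Justifying the restriction to conserving actions (optimality equations, attainment, measurability of the conserving multifunction, and extraction of a countable dominating core) in the \emph{measurable} rather than Borel setting is where the compactness--continuity machinery of \cite{fra-tom2024} and the extreme-point theorems of Section~\ref{Sec-3} carry the argument, and is the point at which the generalization of \cite[Theorem 6.3]{piunovskiy19} has to be secured.
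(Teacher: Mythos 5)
Your plan inverts the paper's logic: you try to \emph{build} relative interiority of $\mu_{\gamma}(r)$ in $r(\bcal{O}^{*})$ into the construction and then read off item 2) from convex analysis, whereas the paper proves item 2) first, inside the sub-model, and only afterwards (Theorem \ref{Interior-point}) deduces relative interiority from it. This inversion creates two genuine gaps. First, the conserving-action reduction that is supposed to ``pin'' $r(\bcal{O}^{*})$ to $\aff(F)$ is not available in this framework: $(\XX,\bfrak{X})$ is merely a measurable space and $Q$, $r$ carry no continuity in the action, so for the scalar criterion $w_{j}=G_{j}\circ r$ the optimal value function need not be measurable, the supremum over $\AA(x)$ need not be attained, and the conserving multifunction need not have measurable graph or measurable selectors. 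The compactification remark does not repair this, since the extended kernel $\hat{Q}$ is by construction discontinuous in the action; the $ws$-compactness machinery of \cite{fra-tom2024} is used in the paper only for sub-models with \emph{finite} action sets (Lemma \ref{finite-compactness}), never for $\mathsf{M}$ itself, and a countable action set would not be compact either.

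Second, and more fundamentally, your extraction of spanning points of the face $F$ is circular. You need chattering stationary policies that \emph{exactly} attain extreme points of $F$, but Theorem \ref{Dubins} only describes extreme points of $\bcal{O}(g,\alpha)$ \emph{when they exist}; existence comes from Lemma \ref{finite-compactness}, which requires a finite-action sub-model already meeting the constraint --- unavailable for a relative-boundary point $c\in\ext(F)$, whose only known attaining policies at this stage are chattering \emph{Markov} policies (Theorem \ref{deterministic-Markov-policy}\ref{deterministic-Markov-policy-a}), involving countably many selectors. Moreover $F=\conv(\ext F)$ presupposes $F$ closed, while $\mathcal{R}(\mathbf{\Pi})$ is only known to be bounded, and density of $\mathcal{R}(\mathbb{C}_{d+1})$ (Theorem \ref{Convexity-Denseness}) says nothing about a proper face, which a dense convex subset can miss entirely. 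Exact attainment of relative-boundary points of $\mathcal{R}(\mathbf{\Pi})$ by $\mathbb{C}_{d+1}$ is essentially Theorem \ref{Main-theorem-aa}, which the paper derives \emph{from} the theorem you are proving; so your argument assumes the conclusion precisely in the only nontrivial case $\mu_{\gamma}(r)\notin\ri(\mathcal{R}(\mathbf{\Pi}))$ (when $\mu_{\gamma}(r)\in\ri(\mathcal{R}(\mathbf{\Pi}))$ your construction, including the rich dominating kernel and the segment argument, is correct and is a clean alternative). The paper avoids all of this by taking $\AA^{*}(x)$ to be simply the supports of a chattering Markov policy realizing $\mu_{\gamma}(r)$, with a disintegrated kernel $\sigma^{*}$ charging every action of $\AA^{*}(x)$ (this gives the domination in item 1), and then proving item 2) by a one-step-deviation argument inside that countable sub-model: if $\mu^{*}$ maximizes $G$ over $\bcal{O}^{*}$, the sets $G^{>}$ and $G^{<}$ of improving and worsening state--action pairs must be $\mu^{*}$-null (the latter because $\sigma^{*}$ charges all actions), so every action is conserving almost everywhere and $G$ is constant on $r(\bcal{O}^{*})$ --- no geometry of $\mathcal{R}(\mathbf{\Pi})$, no optimality equation for $\mathsf{M}$, and no prior attainment results on faces are needed.
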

\begin{proof}
The first item was proved in Lemma 6.2 of  \cite{piunovskiy19} for an atomless model defined on a Borel state space.
We will start by showing that the assumption of an atomless model can be dropped.
To do this we will use the key result proved in Theorem \ref{deterministic-Markov-policy}\ref{deterministic-Markov-policy-a} namely that 
the set of chattering Markov policies is a sufficient family of policies for the constrained problem.
For $\gamma\in\mathbf{\Pi}$, there exists a chattering Markov policy $\rho=\{\rho_{i}\}_{i\in\NN}$ satisfying
$\mu_{\gamma}(r)=\mu_{\rho}(r)$ from Theorem \ref{deterministic-Markov-policy}\ref{deterministic-Markov-policy-a}.
Observe that for any $n\in\NN$, there exists a finite family of functions $\{\phi_{n,i}\}_{i\in\NN^*_{2d+1}}\subset \mathbb{D}$ such that 
$\gamma_{n}(\cdot|x)$ is supported on $\{\phi_{n,i}(x)\}_{i\in\NN^*_{2d+1}}\subset \AA(x)$.
For $x\in\XX$, let us write 
$$\AA_{\Phi}(x)=\union_{n\in\NN} \big\{\phi_{n,i}(x) : i\in\NN^*_{2d+1} \big\}.$$
By Theorem 4.45 in \cite{aliprantis06}, $\KK_{\Phi}=\{(x,a)\in\XX\times\AA : a\in\AA_{\Phi}(x)\}$ is $\bfrak{X}\otimes\bfrak{B}(\AA)$-measurable.
Then, according to the disintegration Lemma, there exists $\sigma\in\mathbb{S}$ such that $\mu_{\rho}=\mu^{\XX}_{\rho}\otimes\sigma$ and \mbox{$\sigma(\AA_{\Phi}(x) | x) =1$} for any $x\in\XX$.
Proceeding as for the proof of Proposition 3.3(ii) in \cite{fra-tom2024}, it follows that $\mu_{\rho}=\mu_{\sigma}$.
Clearly, 
the set $\KK_{\sigma}=\{(x,a)\in\XX\times\AA : a\in\AA_{\sigma}(x)\}$ is $\bfrak{X}\otimes\bfrak{B}(\AA)$-measurable
where $\AA_{\sigma}(x)=\{a\in\AA_{\Phi}(x) : \sigma(\{a\}|x) >0\}$.
However, we cannot conclude that the graph of $\KK_{\sigma}$ has a measure selector. This is a delicate point which requires an additional step with respect to the proof of Lemma 6.2 of  \cite{piunovskiy19}
 which we will now describe.
From Aumann's selection Theorem (see Theorem III.22 in \cite{castaing77}), there exists a $\bfrak{X}_{\sigma}$-measurable function $\phi$ from $\XX$ to $\AA$ whose graph is a subset of $\KK_{\sigma}$
where $\bfrak{X}_{\sigma}$ denotes the $\mu_{\sigma}^{\XX}$-completion of $\bfrak{X}$. From Lemma 1.2 in \cite{crauel02}, we have $\{x\in \XX : \phi(x) \neq \widetilde{\phi}(x)\}\subset \mathcal{N}\in\bfrak{X}$ with $\mu^{\XX}_{\sigma}(\mathcal{N})=0$
for some $\bfrak{X}$-measurable function $\tilde{\phi}$ from $\XX$ to $\AA$. Let us write $\AA^{*}(x)=\AA_{\sigma}(x)$ for $x\in\mathcal{N}^c$ and $\AA^{*}(x)=\{\theta(x)\}$ for $x\in\mathcal{N}$ with
$\theta$ as introduced  in Assumption \ref{selector-theta}. Then, we have  $\KK^{*}=\big\{(x,a)\in\XX\times\AA : a\in \AA^{*}(x) \big\}\in\bfrak{\XX}\otimes\bfrak{B}(\AA)$.
Moreover, the function $\phi^*$ defined on $\XX$ by $\phi^*(x)=\tilde{\phi}(x)$ for $x\in\mathcal{N}^c$ and $\phi^*(x)=\theta(x)$ for $x\in\mathcal{N}$ is $\bfrak{X}$-measurable and its graph is a subset of $\KK^{*}$.
Now, observe that the stochastic kernel $\sigma^{*}$ on $\AA$ given $\XX$ defined by $\sigma^*(\cdot |x)= \sigma(\cdot |x)$ for $x\in\mathcal{N}^c$ and $\sigma^*( \{\theta(x)\} |x)=1$ for $x\in\mathcal{N}$ satisfies $\sigma^{*}(\AA^{*}(x)|x)=1$ for any $x\in\XX$.
Since $\mu^{\XX}_{\sigma}(\mathcal{N})=0$, we have $\mu_{\sigma}=\sigma\otimes\mu^{\XX}_{\sigma}=\sigma^*\otimes\mu^{\XX}_{\sigma}$ and so,
$\mu_{\sigma}=\mu_{\sigma^*}$ by using  Proposition 3.3(ii) in \cite{fra-tom2024} and consequently, $\mu_{\gamma}(r)=\mu_{\sigma^*}(r)$. 
We also have $\mu\ll\mu_{\sigma^*}$ for any $\mu\in\bcal{O}^{*}$ because $\sigma^*(\{a\}|x)>0$ for any $(x,a)\in\KK^{*}$.
Therefore, the model $\mathsf{M}^{*}$ defined by $(\mathbf{X},\mathbf{A},\{\mathbf{A}^{*}(x)\}_{x\in \mathbf{X}},Q,\eta,r)$ satisfies the first item with $\mu^*=\mu_{\sigma^{*}}$.

\bigskip

The second item follows the proof of Theorem 6.3 in \cite{piunovskiy19} by using the fact that the model $\mathsf{M}^{*}$ is uniformly $\Delta$-absorbing and combining  the projection Theorem
(see Theorem 18.25 in \cite{aliprantis06}) and the Measurable Selection Theorem (see Corollary 18.25 in \cite{aliprantis06}) to deal with a general measurable state space $(\XX,\bfrak{X})$.

Below we provide some additional arguments to supplement those of Theorem 6.3 in \cite{piunovskiy19} to be self-contained.
The underlying model we consider now is $\mathsf{M}^{*}$.
Let $h_{\sigma^{*}}(x)=\sum_{k\in\NN} Q_{\sigma^{*}}^{k}\mathbb{I}_{\Delta^{c}}(\XX | x)$ be the $\bfrak{X}$-measurable function on $\XX$ with values in $\widebar{\RR}_{+}$. Let
\mbox{$\XX_{\sigma^{*}}=\{x\in\XX : h_{\sigma^{*}}(x)<+\infty\}$} and $\KK_{\sigma^{*}}=\KK^{*}\inter[\XX_{\sigma^{*}}\times\AA]$. Since $\mathsf{M}$ is $\Delta$-absorbing, then $\eta(\XX_{\sigma^{*}}^{c})=0$. Since
$Q_{\sigma^{*}}^{k}h_{\sigma^{*}}\leq h_{\sigma^{*}}$ for any $k\in\NN$, this yields $\mu_{\sigma^{*}}^{\XX}(\XX_{\sigma^{*}}^{c})=0$.
Write $g(x,a)=G(r(x,a))$ for any $(x,a)\in\XX\times\AA$ where the linear functional $G$ satisfies the hypothesis of item 2).
Consequently, the function 
$\ds \bar{g}_{\sigma^{*}}(x)=\sum_{k\in\NN} Q_{\sigma^{*}}^{k}\mathbb{I}_{\Delta^{c}}g_{\sigma^{*}}(x)$
 is well defined, finite and 
$\bfrak{X}$-measurable on $\XX_{\sigma^{*}}$. 
Consider the $\bfrak{X}\otimes\bfrak{B}(\AA)$-measurable functions $\ds \int_{\XX_{\sigma^{*}}} \bar{g}_{\sigma^{*}}^{\pm}(y) Q(dy|x,a)$ defined on $\XX\times\AA$. They are finite on
$\KK_{\sigma^{*}}$ since 
$$ \sigma^{*}(\{a\} | x)\int_{\XX_{\sigma^{*}}} \bar{g}_{\sigma^{*}}^{\pm}(y) Q(dy|x,a) \leq \int_{\XX_{\sigma^{*}}} \bar{g}_{\sigma^{*}}^{\pm}(y) Q_{\sigma^{*}}(dy|x)
\leq  \sup_{y\in\XX} |g(y)| \: h_{\sigma^{*}}(x)$$
and $\sigma^{*}(\{a\} | x)>0$ for $(x,a)\in \KK_{\sigma^{*}}$.
Therefore, the function 
$$T_{\sigma^{*}}g(x,a)= g(x,a)+ \int_{\XX_{\sigma^{*}}} \bar{g}_{\sigma^{*}}(y) Q(dy|x,a)$$ is well defined, finite and $\bfrak{X}\otimes\bfrak{B}(\AA)$-measurable on $\KK_{\sigma^{*}}$.
The set 
$$G^{>}=\{(x,a)\in\KK_{\sigma^{*}} : T_{\sigma^{*}}g(x,a) > \bar{g}_{\sigma^{*}}(x) \}$$
is $\bfrak{X}\otimes\bfrak{B}(\AA)$-measurable. From the Projection Theorem \cite[Theorem 18.25]{aliprantis06}, the projection of $G^{>}$ on $\XX$ denoted by $G^{>}_{\XX}$ is a
$\bfrak{X}_{*}$-measurable set where $\bfrak{X}_{*}$ denotes the $\mu_{\sigma^{*}}^{\XX}$-completion of $\bfrak{X}$.
Applying Corollary 18.27 in \cite{aliprantis06}, there exist a \mbox{$\bfrak{X}$-measurable} subset $\XX^{>}$ of $G^{>}_{\XX}$
and a $\bfrak{X}$-measurable set $\bcal{N}^{>}$ such that $G^{>}_{\XX}\subset \XX^{>}\union\bcal{N}^{>}$ and $\mu_{\sigma^{*}}^{\XX}(\bcal{N}^{>})=0$
and a measurable mapping $\varphi^{*}$ from
$\XX^{>}$ to $\AA$ satisfying $\varphi^{*}\in G^{>}(x)$ for any $x\in\XX^{>}$.
Consider an arbitrary $N\in\NN$ and define the Markov policies $\sigma=\{\sigma_n\}_{n\ge0}$ by \mbox{$\sigma_{k}(\cdot |x)=\sigma^{*}_{k}(\cdot |x)$} for $k\neq N$ and 
$\sigma_{N}(\cdot |x) =\sigma^{*}(\cdot | x) \mathbf{I}_{\XX\setminus\XX^{>}}(x)+\delta_{\varphi^{*}(x)}(\cdot) \mathbf{I}_{\XX^{>}}(x)$
for $x\in\XX$.
Now observe that 
\begin{align*}
\int_{\XX^{>}}  Q\bar{g}_{\sigma^{*}}(x,\varphi^{*}(x)) \eta Q^{N}_{\sigma^{*}}(dx) & = \int_{\XX^{>}}  \int_{\XX} \bar{g}_{\sigma^{*}} (y) Q(dy | x,\varphi^{*}(x)) \eta Q^{N}_{\sigma^{*}}(dx) 
\nonumber \\
& = \int_{\XX^{>}}  \int_{\XX_{\sigma^{*}}} \bar{g}_{\sigma^{*}} (y) Q(dy | x,\varphi^{*}(x)) \eta Q^{N}_{\sigma^{*}}(dx) 
\end{align*}
since $\ds \int_{\XX^{>}}  Q(\XX_{\sigma^{*}}^{c} | x,\varphi^{*}(x)) \eta Q^{N}_{\sigma^{*}}(dx) \leq \mu_{\sigma}^{\XX} (\XX^{>})$ and $\mu_{\sigma}^{\XX} \ll \mu_{\sigma^*}^{\XX}=\mu_{\XX}^{*}$
as established in the first item.
Therefore, 
\begin{align*} 
\mu_{\sigma} (g)-\mu_{\sigma^{*}} (g) 
& = \int_{\XX^{>}}  \big[g(x,\varphi^{*}(x)) +Q\bar{g}_{\sigma^{*}}(x,\varphi^{*}(x)) - \bar{g}_{\sigma^{*}}(x)\big] \eta Q^{N}_{\sigma^{*}}(dx) \nonumber\\
& = \int_{\XX^{>}}  \big[T_{\sigma^{*}}g(x,\varphi^{*}(x)) - \bar{g}_{\sigma^{*}}(x)\big] \eta Q^{N}_{\sigma^{*}}(dx).
\end{align*}
Since $\mu_{\sigma} (g)\leq \mu_{\sigma^{*}} (g)$ by hypothesis and  $T_{\sigma^{*}}g(x,\varphi^{*}(x)) - \bar{g}_{\sigma^{*}}(x)>0$ for $x$ in $\XX^{>}$ we get that 
$\eta Q^{N}_{\sigma^{*}}(\XX^{>})=0$ and so, $\mu_{\sigma^{*}}^{\XX}(\XX^{>})=0$.

Proceeding as above, there exist a $\bfrak{X}$-measurable set $\bcal{N}^{<}$ and
a \mbox{$\bfrak{X}$-measurable} subset $\XX^{<}$ of $G^{<}_{\XX}$ defined as the projection on $\XX$ of the set $G^{<}=\{(x,a)\in\KK_{\sigma^{*}} : T_{\sigma^{*}}g(x,a) < \bar{g}_{\sigma^{*}}(x) \}$
such that $\mu_{\sigma^{*}}^{\XX}(\bcal{N}^{<})=0$ and $G^{<}_{\XX}\subset \XX^{<}\union\bcal{N}^{<}$.
For an arbitrary $N\in\NN$,
\begin{align*} 
\mu_{\sigma^*} (g)-\mu_{\sigma^{*}} (g) 
& = \int_{\XX^{<}\setminus\XX^{>}}  \Big[\sum_{a\in\AA^{*}(x)} \sigma^*(\{a\}|x)T_{\sigma^{*}}g(x,a) - \bar{g}_{\sigma^{*}}(x)\Big] \eta Q^{N}_{\sigma^{*}}(dx).
\end{align*}
The expression in the bracket of the previous equation is strictly negative and so $\eta Q^{N}_{\sigma^{*}}(\XX^{<}\setminus\XX^{>})=0$ yielding
$\mu_{\sigma^{*}}^{\XX}(\XX^{<}\setminus\XX^{>})=0$.

Finally, we obtain $\mu_{\sigma^{*}}^{\XX}(\XX^{=})=\mu_{\sigma^{*}}^{\XX}(\XX)$ for $\XX^{=}=\XX_{\sigma^{*}}\setminus[\XX^{>}\union\XX^{<}]$.
The proof is easily completed by following the arguments presented on page 177 in \cite{piunovskiy19}.
\end{proof}

The next theorem states that for any policy $\pi\in\mathbf{\Pi}$, there exist a subset 
$\mathbf{\Pi}^{*}$ of $\mathbf{\Pi}$ such that the model $\mathsf{M}^{*}$ associated with $\mathbf{\Pi}^{*}$ inherits the properties of $\mathsf{M}$ and for which also
$\mu_{\pi}(r)$ is in the relative interior of $r(\mathbf{\Pi}^{*})$.
\begin{theorem}
\label{Interior-point}
Assume that the model $\mathsf{M}$ is uniformly $\Delta$-absorbing.
For any $\pi\in\mathbf{\Pi}$, there exists a uniformly $\Delta$-absorbing model $\mathsf{M}^{*}=(\mathbf{X},\mathbf{A},\{\mathbf{A}^*(x)\}_{x\in \mathbf{X}},Q,\eta,r)$ satisfying the measurability conditions with ${\AA}^*(x)\subset\AA(x)$
and for which $\mu_{\pi}(r)$ belongs to the relative interior of $\mathcal{R}(\mathbf{\Pi}^{*})$ where $\mathbf{\Pi}^{*}\subset\mathbf{\Pi}$ denotes set of all control policies associated with the model $\mathsf{M}^{*}$.
\end{theorem}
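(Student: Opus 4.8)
The plan is to deduce the statement almost immediately from Theorem~\ref{Alexey-Theo}, whose two items already encode everything needed once item~2 is reinterpreted through the dual (supporting-functional) description of the relative interior. First I would apply Theorem~\ref{Alexey-Theo} to the given policy $\pi$ (in the role of $\gamma$). This produces a uniformly $\Delta$-absorbing model $\mathsf{M}^{*}=(\mathbf{X},\mathbf{A},\{\mathbf{A}^{*}(x)\}_{x\in\mathbf{X}},Q,\eta,r)$ satisfying the measurability conditions, with $\AA^{*}(x)$ a finite or countable subset of $\AA(x)$, together with a measure $\mu^{*}\in\bcal{O}^{*}$ such that $\mu^{*}(r)=\mu_{\pi}(r)$ and such that item~2 of that theorem holds. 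Since $\AA^{*}(x)\subset\AA(x)$, every policy of $\mathsf{M}^{*}$ is also a policy of $\mathsf{M}$, so $\mathbf{\Pi}^{*}\subset\mathbf{\Pi}$; and because $Q$, $\eta$ and $r$ are left unchanged, $\mathcal{R}(\mathbf{\Pi}^{*})=\{\mu(r):\mu\in\bcal{O}^{*}\}$ whether the performance is computed in $\mathsf{M}^{*}$ or in $\mathsf{M}$. It therefore remains only to prove that $\mu^{*}(r)$ lies in $\ri\big(\mathcal{R}(\mathbf{\Pi}^{*})\big)$.

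Next I would record that $\mathcal{R}(\mathbf{\Pi}^{*})$ is a convex subset of $\RR^{d}$: the set $\bcal{O}^{*}$ of occupation measures of the $\Delta$-absorbing model $\mathsf{M}^{*}$ is convex, and the map $\mu\mapsto\mu(r)$ from $\bcal{M}(\XX\times\AA)$ to $\RR^{d}$ is linear, so the image $\mathcal{R}(\mathbf{\Pi}^{*})$ is convex. (It is moreover bounded, since $\|\mu(r)\|\le\|r\|\sup_{\rho\in\mathbf{\Pi}}\mathbb{E}_{\rho}[T_{\Delta}]<+\infty$ for a uniformly $\Delta$-absorbing model, though this will not be needed.)

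The heart of the argument is the following elementary dual characterization of the relative interior, which I would state and apply with $\mathcal{C}=\mathcal{R}(\mathbf{\Pi}^{*})$ and $p=\mu^{*}(r)$: for a convex set $\mathcal{C}\subset\RR^{d}$ and a point $p\in\mathcal{C}$, if every linear functional $G$ on $\RR^{d}$ whose supremum over $\mathcal{C}$ is attained at $p$ is necessarily constant on $\mathcal{C}$, then $p\in\ri(\mathcal{C})$. I would prove this by contraposition. Suppose $p\notin\ri(\mathcal{C})$; then $\ri\{p\}=\{p\}$ and $\ri(\mathcal{C})$ are disjoint, so by the proper separation theorem (Theorem~11.3 in \cite{rockafellar70}) there is a hyperplane properly separating $\{p\}$ and $\mathcal{C}$. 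Writing it as $\{x\in\RR^{d}:G(x)=G(p)\}$ for a linear functional $G\neq 0$, proper separation gives $G(c)\le G(p)$ for all $c\in\mathcal{C}$ while $\mathcal{C}$ is not contained in the hyperplane, that is, $\sup_{c\in\mathcal{C}}G(c)=G(p)$ with $G$ nonconstant on $\mathcal{C}$. This is exactly the negation of the stated hypothesis. Applying the characterization, any linear functional $G$ with $\sup_{\mu\in\bcal{O}^{*}}G(\mu(r))=G(\mu^{*}(r))$ is, by item~2 of Theorem~\ref{Alexey-Theo}, constant on $\mathcal{R}(\mathbf{\Pi}^{*})$; hence $\mu_{\pi}(r)=\mu^{*}(r)\in\ri\big(\mathcal{R}(\mathbf{\Pi}^{*})\big)$, which completes the proof.

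I do not expect a genuine obstacle here, as the substantive work has been absorbed into Theorem~\ref{Alexey-Theo}; the only care required is to match its item~2 verbatim to the separation step. Concretely, one must check that the separating functional produced by Rockafellar's theorem is genuinely a linear functional on $\RR^{d}$ and that its supremum over $\mathcal{R}(\mathbf{\Pi}^{*})$ is \emph{attained} precisely at $\mu^{*}(r)$ (and not merely bounded above by $G(\mu^{*}(r))$), so that the hypothesis of item~2 applies; since $\mu^{*}(r)\in\mathcal{R}(\mathbf{\Pi}^{*})$, attainment is automatic. The remaining routine verifications are that $\mathbf{\Pi}^{*}\subset\mathbf{\Pi}$ and that $\mathcal{R}(\mathbf{\Pi}^{*})$ coincides with $\{\mu(r):\mu\in\bcal{O}^{*}\}$, both of which follow at once from $\AA^{*}(x)\subset\AA(x)$ and the fact that $Q$, $\eta$ and $r$ are unaltered in $\mathsf{M}^{*}$.
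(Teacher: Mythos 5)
Your proposal is correct and follows essentially the same route as the paper: apply Theorem~\ref{Alexey-Theo} to obtain $\mathsf{M}^{*}$ with $\mu_{\pi}(r)\in\mathcal{R}(\mathbf{\Pi}^{*})$, then use item~2 of that theorem together with a supporting-hyperplane argument to rule out $\mu_{\pi}(r)$ lying on the relative boundary. The only cosmetic difference is that the paper invokes Corollary~11.6.2 in \cite{rockafellar70} directly (existence of a nonconstant linear functional maximized at any relative boundary point), whereas you re-derive that fact from the proper separation theorem (Theorem~11.3 in \cite{rockafellar70}); the two are interchangeable here.
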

\begin{proof}
According to Theorem \ref{Alexey-Theo}, for any $\pi\in\mathbf{\Pi}$, there
exists a uniformly $\Delta$-absorbing model $\mathsf{M}^{*}=(\mathbf{X},\mathbf{A},\{\mathbf{A}^{*}(x)\}_{x\in \mathbf{X}},Q,\eta,r)$ satisfying the measurability conditions with $\AA^{*}(x)\subset\AA(x)$ and such that
$\mu_{\pi}(r)\in \mathcal{R}(\mathbf{\Pi}^{*})$ where $\mathbf{\Pi}^{*}$ denotes set of all control policies associated with the model $\mathsf{M}^{*}$.
Clearly, $\mathbf{\Pi}^{*}\subset\mathbf{\Pi}$.
Suppose that $\mu_{\pi}(r)$ belongs to the relative boundary of $\mathcal{R}(\mathbf{\Pi}^{*})$. Then, there is a linear functional $G$ on $\RR^d$ not constant on $\mathcal{R}(\mathbf{\Pi}^{*})$ such that
$G$ achieves its maximum at $\mu_{\pi}(r)$ by Corollary 11.6.2 in \cite{rockafellar70}. 
However, we have by Theorem \ref{Alexey-Theo} that $G(\mu(r))= G(\mu_{\pi}(r))$ for any $\mu\in \bcal{O}^{*}$.
This leads a contradiction and so $\mu_{\pi}(r)$ belongs necessarily to the relative interior of $\mathcal{R}(\mathbf{\Pi}^{*})$.
\end{proof}

Now, we obtain the main results of this section.
\begin{theorem}
\label{Main-theorem-aa}
Assume that the model $\mathsf{M}$ is uniformly $\Delta$-absorbing.
Then, $\mathbb{C}_{d+1}$ is a sufficient family of policies, that is, $\mathcal{R}(\mathbb{C}_{d+1})=\mathcal{R}(\mathbf{\Pi})$.
\end{theorem}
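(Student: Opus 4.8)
The plan is to establish the nontrivial inclusion $\mathcal{R}(\mathbf{\Pi})\subseteq\mathcal{R}(\mathbb{C}_{d+1})$, the reverse inclusion being immediate since $\mathbb{C}_{d+1}\subseteq\mathbf{\Pi}$. So I would fix an arbitrary policy $\pi\in\mathbf{\Pi}$ and aim to produce a chattering stationary policy $\gamma\in\mathbb{C}_{d+1}$ with $\mu_{\gamma}(r)=\mu_{\pi}(r)$; this is exactly what the sufficiency assertion requires.

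First I would invoke Theorem \ref{Interior-point}: there exists a uniformly $\Delta$-absorbing model $\mathsf{M}^{*}=(\mathbf{X},\mathbf{A},\{\mathbf{A}^{*}(x)\}_{x\in\mathbf{X}},Q,\eta,r)$ satisfying the measurability conditions, with $\AA^{*}(x)\subseteq\AA(x)$ for every $x\in\XX$, and such that $\mu_{\pi}(r)$ lies in the relative interior of $\mathcal{R}(\mathbf{\Pi}^{*})$, where $\mathbf{\Pi}^{*}$ denotes the set of all policies of $\mathsf{M}^{*}$. Since $\mathsf{M}^{*}$ is itself uniformly $\Delta$-absorbing, Theorem \ref{Convexity-Denseness}\ref{Convexity-Denseness-a} applies to it: writing $\mathbb{C}^{*}_{d+1}$ for the chattering stationary policies of order $d+1$ of $\mathsf{M}^{*}$, the set $\mathcal{R}(\mathbb{C}^{*}_{d+1})$ is convex, is dense in $\mathcal{R}(\mathbf{\Pi}^{*})$, and shares the same relative interior as $\mathcal{R}(\mathbf{\Pi}^{*})$.

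The crux is then a short relative-interior argument. Because $\mathcal{R}(\mathbb{C}^{*}_{d+1})$ and $\mathcal{R}(\mathbf{\Pi}^{*})$ have the same relative interior, the point $\mu_{\pi}(r)\in\ri\big(\mathcal{R}(\mathbf{\Pi}^{*})\big)$ also lies in $\ri\big(\mathcal{R}(\mathbb{C}^{*}_{d+1})\big)$. Since the relative interior of a nonempty convex set is contained in the set itself, I conclude $\mu_{\pi}(r)\in\mathcal{R}(\mathbb{C}^{*}_{d+1})$, so there is a chattering stationary policy $\gamma$ of order $d+1$ for $\mathsf{M}^{*}$ with $\mu_{\gamma}(r)=\mu_{\pi}(r)$. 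It is precisely here that the density of $\mathcal{R}(\mathbb{C}^{*}_{d+1})$ in $\mathcal{R}(\mathbf{\Pi}^{*})$ alone would not suffice: density would only place $\mu_{\pi}(r)$ in the closure, and it is the \emph{interior-point} property furnished by Theorem \ref{Interior-point} that upgrades the approximate statement to exact membership.

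Finally I would transfer $\gamma$ back to the original model. Because $\AA^{*}(x)\subseteq\AA(x)$ for all $x$ while $Q$, $\eta$, $r$ and the absorbing set $\Delta$ are unchanged, any finitely supported kernel $\varphi\in\mathbb{C}$ with $\varphi(\AA^{*}(x)\mid x)=1$ also satisfies $\varphi(\AA(x)\mid x)=1$; hence $\gamma$ is an admissible chattering stationary policy of order $d+1$ for $\mathsf{M}$, that is $\gamma\in\mathbb{C}_{d+1}$, and it induces the very same controlled process and occupation measure in $\mathsf{M}$, so that $\mu_{\gamma}(r)=\mu_{\pi}(r)$ holds in the original model. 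This yields $\mathcal{R}(\mathbf{\Pi})\subseteq\mathcal{R}(\mathbb{C}_{d+1})$ and completes the argument. The main obstacle is conceptual rather than computational: essentially all of the work is absorbed into Theorems \ref{Convexity-Denseness} and \ref{Interior-point}, and the only delicate point to verify here is that $\mathsf{M}^{*}$ genuinely inherits both uniform $\Delta$-absorption and the measurability conditions so that Theorem \ref{Convexity-Denseness} may legitimately be applied to it — which is guaranteed by the statement of Theorem \ref{Interior-point}.
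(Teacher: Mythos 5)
Your proposal is correct and follows essentially the same route as the paper's own proof: invoke Theorem \ref{Interior-point} to place $\mu_{\pi}(r)$ in the relative interior of $\mathcal{R}(\mathbf{\Pi}^{*})$ for a sub-model $\mathsf{M}^{*}$, apply Theorem \ref{Convexity-Denseness}\ref{Convexity-Denseness-a} to identify that relative interior with $\ri\big(\mathcal{R}(\mathbb{C}^{*}_{d+1})\big)$, and use $\mathbb{C}^{*}_{d+1}\subset\mathbb{C}_{d+1}$ to transfer back. Your write-up is in fact slightly more explicit than the paper's on the two points it leaves implicit (why the interior-point property, rather than mere density, gives exact membership, and why $\gamma$ remains admissible with the same occupation measure in $\mathsf{M}$), but there is no substantive difference.
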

\begin{proof} 
For an arbitrary policy $\pi\in\mathbf{\Pi}$,  there exists $\mathsf{M}^{*}=(\mathbf{X},\mathbf{A},\{\mathbf{A}^*(x)\}_{x\in \mathbf{X}},Q,\eta,r)$ 
a uniformly $\Delta$-absorbing model satisfying the measurability conditions with
${\AA}^*(x)\subset\AA(x)$ for which $\mu_{\pi}(r)$ belongs to the relative interior of $\mathcal{R}(\mathbf{\Pi}^{*})$. Let us write $\mathbb{C}_{d+1}^*$ for the set of chattering stationary policies for the model $\mathsf{M}^{*}$.
We can apply Theorem \ref{Convexity-Denseness}\ref{Convexity-Denseness-a} to get that $\mathcal{R}(\mathbf{\Pi}^{*})$ and $\mathcal{R}(\mathbb{C}_{d+1}^*)$ have the same relative interior.
Therefore, there exists a chattering stationary policy  $\gamma\in\mathbb{C}_{d+1}^*$ such that $\mu_{\pi}(r)=\mu_{\gamma}(r)$ but $\mathbb{C}_{d+1}^*\subset\mathbb{C}_{d+1}$ showing the result.
\end{proof}
We provide two different proofs for the second main result which is an extension of the Feinberg-Piunovskiy Theorem in the context of a general state space
\begin{theorem}[The Feinberg-Piunovskiy Theorem]
\label{Main-theorem-bb}
Assume that the model $\mathsf{M}$ is uniformly $\Delta$-absorbing and atomless then $\mathbb{D}$ is a sufficient family of policies, that is, $\mathcal{R}(\mathbb{D})=\mathcal{R}(\mathbf{\Pi})$.
\end{theorem}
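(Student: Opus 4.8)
The plan is to give two independent proofs, both leveraging the machinery already assembled, so that almost no new work is required.

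\textbf{First approach.} The quickest route is to combine Theorem \ref{Main-theorem-aa} with Proposition \ref{Consequence-chattering}. Since $\mathsf{M}$ is uniformly $\Delta$-absorbing, Theorem \ref{Main-theorem-aa} already gives $\mathcal{R}(\mathbb{C}_{d+1})=\mathcal{R}(\mathbf{\Pi})$. Invoking in addition the atomless hypothesis, Proposition \ref{Consequence-chattering} applied with $p=d+1$ converts the sufficiency of the chattering family directly into the sufficiency of $\mathbb{D}$, yielding $\mathcal{R}(\mathbb{D})=\mathcal{R}(\mathbf{\Pi})$. All of the conceptual content here is carried by Theorem \ref{Castaing-Valadier}, which in the atomless case identifies the extreme points of the constrained occupation-measure sets with occupation measures generated by deterministic stationary policies.

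\textbf{Second approach (parallel to Theorem \ref{Main-theorem-aa}).} I would fix an arbitrary $\pi\in\mathbf{\Pi}$ and invoke Theorem \ref{Interior-point} to obtain a uniformly $\Delta$-absorbing model $\mathsf{M}^{*}=(\mathbf{X},\mathbf{A},\{\mathbf{A}^{*}(x)\}_{x\in\mathbf{X}},Q,\eta,r)$ with $\AA^{*}(x)\subset\AA(x)$ for which $\mu_{\pi}(r)$ lies in the relative interior of $\mathcal{R}(\mathbf{\Pi}^{*})$, where $\mathbf{\Pi}^{*}\subset\mathbf{\Pi}$ is the policy set of $\mathsf{M}^{*}$. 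Because $\mathsf{M}^{*}$ retains the transition kernel $Q$ and the initial law $\eta$ of $\mathsf{M}$, and the atomless property of Definition \ref{def-atomless-MDP} depends only on $\eta\mathbb{I}_{\Delta^c}$ and on the measures $Q\mathbb{I}_{\Delta^c}(\cdot|x,a)$ for $(x,a)\in\Delta^c\times\AA$, the model $\mathsf{M}^{*}$ is again atomless. Hence Theorem \ref{Convexity-Denseness}\ref{Convexity-Denseness-b} applies to $\mathsf{M}^{*}$ and shows that $\mathcal{R}(\mathbb{D}^{*})$ and $\mathcal{R}(\mathbf{\Pi}^{*})$ share the same relative interior, $\mathbb{D}^{*}$ being the deterministic stationary policies of $\mathsf{M}^{*}$. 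Consequently $\mu_{\pi}(r)\in\ri(\mathcal{R}(\mathbf{\Pi}^{*}))=\ri(\mathcal{R}(\mathbb{D}^{*}))\subset\mathcal{R}(\mathbb{D}^{*})$, so there is $\phi\in\mathbb{D}^{*}$ with $\mu_{\phi}(r)=\mu_{\pi}(r)$. Since $\AA^{*}(x)\subset\AA(x)$ forces $\mathbb{D}^{*}\subset\mathbb{D}$, this gives $\mu_{\pi}(r)\in\mathcal{R}(\mathbb{D})$, and as $\pi$ was arbitrary, $\mathcal{R}(\mathbb{D})=\mathcal{R}(\mathbf{\Pi})$.

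\textbf{Main obstacle.} For the first approach there is essentially no remaining difficulty once Proposition \ref{Consequence-chattering} is available; the real work was done earlier in proving Theorem \ref{Castaing-Valadier} and Theorem \ref{Main-theorem-aa}. For the second approach, the one point that needs care, and the single place where the proof genuinely diverges from that of Theorem \ref{Main-theorem-aa}, is checking that the action-restricted model $\mathsf{M}^{*}$ stays atomless so that Theorem \ref{Convexity-Denseness}\ref{Convexity-Denseness-b} (rather than merely the absorbing version Theorem \ref{Convexity-Denseness}\ref{Convexity-Denseness-a}) is legitimately applicable. This is immediate because restricting the admissible actions alters neither $\eta$ nor $Q$, hence preserves both atomlessness conditions; with this noted, the argument closes exactly as in Theorem \ref{Main-theorem-aa}.
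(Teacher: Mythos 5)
Your proposal is correct and follows essentially the same route as the paper, which likewise gives two proofs: the first combines Proposition \ref{Consequence-chattering} with Theorem \ref{Main-theorem-aa}, and the second proceeds exactly as in Theorem \ref{Main-theorem-aa} but applies Theorem \ref{Convexity-Denseness}\ref{Convexity-Denseness-b}, noting that the restricted model $\mathsf{M}^{*}$ remains atomless and that $\mathbb{D}^{*}\subset\mathbb{D}$. Your explicit justification that atomlessness is preserved (because restricting the action sets changes neither $\eta$ nor $Q$) is the same observation the paper makes, just spelled out in more detail.
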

\begin{proof}
The first proof is obtained by combining Proposition \ref{Consequence-chattering} and Theorem \ref{Main-theorem-aa}.
\nl
The second proof consists in proceeding exactly as in the proof of Theorem \ref{Main-theorem-aa} by applying Theorem \ref{Convexity-Denseness}\ref{Convexity-Denseness-b} and by noting that if $\mathsf{M}$ is atomless so is $\mathsf{M}^*$ and also $\mathbb{D}^*\subset\mathbb{D}$ if $\mathbb{D}^*$ denotes the set of deterministic stationary policies for the model $\mathsf{M}^{*}$.
\end{proof}



\end{document}